\newtheorem{theorem}{Theorem}[section]
\newtheorem{lemma}[theorem]{Lemma}
\newtheorem{prop}[theorem]{Proposition}
\newtheorem{cor}[theorem]{Corollary}
\DeclareMathOperator{\im}{im}
\DeclareMathOperator{\sfl}{sf}
\DeclareMathOperator{\diag}{diag}
\DeclareMathOperator{\sgn}{sgn}
\DeclareMathOperator{\interior}{int}
\DeclareMathOperator{\GL}{GL}
\DeclareMathOperator{\inv}{inv}
\title{The Equivariant Spectral Flow and Bifurcation for Functionals with Symmetries - Part I}
\author{Marek Izydorek, Joanna Janczewska, Maciej Starostka and Nils Waterstraat}
\begin{document}
\date{}
\maketitle

\footnotetext[1]{{\bf 2010 Mathematics Subject Classification: Primary 58E09; Secondary 58J30, 58E07, 34C25}}

\begin{abstract}
\noindent
We consider bifurcation of critical points from a trivial branch for families of functionals that are invariant under the orthogonal action of a compact Lie group. Based on a recent construction of an equivariant spectral flow by the authors, we obtain a bifurcation theorem that generalises well-established results of Smoller and Wasserman as well as Fitzpatrick, Pejsachowicz and Recht. Finally, we discuss some elementary examples of strongly indefinite systems of PDEs and Hamiltonian systems where the mentioned classical approaches fail but an invariance under an orthogonal action of $\mathbb{Z}_2$ makes our methods applicable and yields the existence of bifurcation.   
\end{abstract}

\section{Introduction}
Let $H$ be a real separable Hilbert space of infinite dimension and $f:I\times H\rightarrow\mathbb{R}$ a continuous map such that each $f_\lambda:=f(\lambda,\cdot):H\rightarrow\mathbb{R}$ is $C^2$ with derivatives depending continuously on the parameter $\lambda\in I:=[0,1]$. Let $0$ be a critical point of all $f_\lambda$ and consider the family of equations

\[(\nabla f_\lambda)(u)=0,\]
which now has $u=0$ as solution for all $\lambda\in I$. A parameter value $\lambda_0\in I$ is called a bifurcation point (of critical points) if in every neighbourhood $U\subset I\times H$ of $(\lambda_0,0)$ there is some $(\lambda,u)$ such that $(\nabla f_\lambda)(u)=0$ and $u\neq 0$. The existence of bifurcation points is a classical problem in nonlinear analysis that has been systematically studied for many decades. A central role is played by the second derivative $D^2_0f_\lambda$ at the critical point $0\in H$, which is a symmetric bounded bilinear form on $H$. By the Riesz-representation theorem it uniquely determines a selfadjoint operator $L_\lambda$ on $H$ such that

\begin{align}\label{introduction-Hessian}
\langle L_\lambda u,v\rangle_H=(D^2_0f_\lambda)(u,v),\quad u,v\in H,
\end{align}
which is called the Hessian of $f$ at $0\in H$. Note that it is an immediate consequence of the implicit function theorem that $L_\lambda$ is not invertible if $\lambda$ is a bifurcation point. Krasnoselskii considered in the sixties the case that $L_\lambda=I_H-\lambda K$, where $I_H$ denotes the identity on $H$ and $K$ is a compact selfadjoint operator. He showed in a celebrated theorem that the bifurcation points of $f$ are exactly those parameter values for which $L_\lambda$ has a non-trivial kernel or, in other words, $\frac{1}{\lambda}$ is an element of the spectrum $\sigma(K)$ of the compact operator $K$. More generally, let us now assume that the selfadjoint operators $L_\lambda$ are Fredholm, i.e., they have a finite dimensional kernel and a closed range. The following generalisation of Krasnoselskii's work is nowadays a common result in nonlinear analysis that can be found e.g. in the monographs \cite{Kielhoefer}, \cite{Mawhin}. Henceforth, we denote by $\mu_-(S)$ the Morse index of a selfadjoint Fredholm operator $S$, i.e., the number of negative eigenvalues of $S$ counted with multiplicities.

\begin{theorem}\label{thm-Morsejump}
If $\mu_-(L_\lambda)<\infty$ for all $\lambda\in I$, $L_0$, $L_1$ are invertible and

\begin{align}\label{Morsejump}
\mu_-(L_0)\neq\mu_-(L_1),
\end{align}
then there is a bifurcation point of critical points of $f$ in $(0,1)$.
\end{theorem}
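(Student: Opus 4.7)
The plan is to exploit the spectral flow $\sfl(L)$ of the path of selfadjoint Fredholm operators $L:=\{L_\lambda\}_{\lambda\in I}$, a well-defined integer because $L_0$ and $L_1$ are invertible. The first step is to identify it with the Morse-index jump. Since $\mu_-(L_\lambda)<\infty$ throughout, the essential spectrum of each $L_\lambda$ lies in $[0,\infty)$, so only the finitely many negative eigenvalues can cross zero along the path. After a small perturbation these crossings can be made transverse; each then shifts $\mu_-$ by $\pm 1$ and contributes the same $\pm 1$ to $\sfl(L)$, so
\[
\sfl(L)=\mu_-(L_0)-\mu_-(L_1),
\]
which is non-zero by \eqref{Morsejump}.

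The second step shows that $\sfl(L)\neq 0$ forces bifurcation. I would argue by contradiction: if no bifurcation point lay in $(0,1)$ then there would exist $\varepsilon>0$ with $\nabla f_\lambda(u)\neq 0$ for all $\lambda\in I$ and all $u$ with $0<\|u\|\le\varepsilon$. A Lyapunov--Schmidt reduction adapted to the spectral splitting of $L_\lambda$ near zero, performed locally in $\lambda$ and patched over a partition of $I$, converts the problem into a continuous family of gradient equations on finite-dimensional subspaces whose unique small solution is $u=0$. The local Conley index of the negative gradient flow is then a homotopy invariant along the family, while at the endpoints it is the pointed sphere of dimension $\mu_-(L_\lambda)$; the resulting contradiction with \eqref{Morsejump} yields a bifurcation point.

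The main obstacle is the $\lambda$-dependence of the spectral subspaces underlying the Lyapunov--Schmidt reduction: they jump at eigenvalue crossings, so no single global splitting of $H$ suffices. Organising the local reductions into a coherent topological computation that recovers the full integer $\sfl(L)$, and not merely a mod-$2$ invariant, is the technical heart of the argument and is precisely where a careful construction of the (later, equivariant) spectral flow becomes indispensable.
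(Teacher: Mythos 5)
Your skeleton---Lyapunov--Schmidt reduction to a finite-dimensional gradient problem followed by a Conley index comparison at the endpoints---is the one the paper uses in Section~\ref{sect-thm:main} to prove the more general Theorem~\ref{thm:main}, of which Theorem~\ref{thm-Morsejump} is recorded as an immediate consequence (the paper does not prove Theorem~\ref{thm-Morsejump} independently; it cites \cite{Kielhoefer}, \cite{Mawhin}). However, your Step 1 is a detour: hypothesis \eqref{Morsejump} already supplies the Morse-index jump, and your Conley index argument in Step 2 never actually uses the integer $\sfl(L)$, so the passage through the spectral flow is superfluous for this particular statement; it is needed only for the stronger Theorem~\ref{thm-FPR}.

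The substantive gap is precisely where you flag one, at ``performed locally in $\lambda$ and patched over a partition of $I$.'' The continuation property of the Conley index requires a \emph{single} isolating neighbourhood in a \emph{single} finite-dimensional reduced phase space $X$, valid for all $\lambda\in I$; attempting to glue Lyapunov--Schmidt reductions over different $\lambda$-patches, each with its own finite-dimensional $X$, leaves you with no mechanism to compare the indices across patch boundaries. The paper does not patch; it produces a $\lambda$-uniform reduction. It first enlarges the problem via $\overline{f}_\lambda(w,u,v)=f_\lambda(u)+\frac{1}{2}\|w\|^2-\frac{1}{2}\|v\|^2$ to place the Hessians in $\mathcal{FS}^i(H)$, then the cogredient parametrix of Theorem~\ref{thm-cogredpar} brings the entire path into the normal form $L_\lambda=Q+K_\lambda$ with $Q$ a fixed symmetry and $K_\lambda$ a continuous \emph{compact} family. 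Relative compactness of $\{K_\lambda u:\lambda\in I,\ \|u\|=1\}$ then produces, in Lemma~\ref{bif-lemma-isomorphisms}, a single $n_0$ such that for all $n\ge n_0$ the one splitting $H=H_n\oplus H_n^\perp$ satisfies the hypotheses of the reduction Lemma~\ref{lemma-implicit} for every $\lambda\in I$ simultaneously, and such that the reduced Hessians at the invertible endpoints retain the correct Morse indices. One then invokes the Conley continuation theorem once, over all of $I$, and the sphere comparison at the endpoints goes through exactly as you describe. It is this uniform finite-dimensional reduction, rather than a patching argument, that is the missing ingredient in your proposal.
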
  
\noindent
It is readily seen that the invertibility of $L_0$ and $L_1$ cannot be lifted in this theorem (cf. \cite[\S 12.2]{book}). However, there is ample motivation to relax the assumption on the finiteness of the Morse indices and condition \eqref{Morsejump}. Firstly, sometimes symmetries of the functionals affect the applicability of \eqref{Morsejump}. For example, if the spectra of the operators $L_\lambda$ are symmetric in some neighbourhood about $0$, then the Morse indices are necessarily constant and the above theorem cannot be applied. Secondly, the finitness of the Morse indices excludes various important applications. For example, when studying solutions of Hamiltonian systems or non-cooperative elliptic systems of PDE as critical points of a suitable functional, the appearing operators $L_\lambda$ will usually not meet this condition.\\
The obstacle caused by constant Morse-indices was treated by Smoller and Wasserman in their seminal work \cite{SmollerWasserman} as follows. Assume that $G$ is a compact Lie group that acts orthogonally on $H$ and that each functional $f_\lambda$ is invariant under the action of $G$, i.e., $f_\lambda(gu)=f_\lambda(u)$ for all $g\in G$ and $u\in H$. Then the Hessians $L_\lambda$ are readily seen to be $G$-equivariant, i.e., $L_\lambda(gu)=gL_\lambda u$. If now $\mu_-(L_\lambda)<\infty$, then the direct sum $E^-(L_\lambda)$ of all eigenspaces with respect to negative eigenvalues is of finite dimension. As $E^-(L_\lambda)$ is easily seen to be invariant under the action of $G$, this space is a finite dimensional representation of the Lie group $G$. The terminology of a nice Lie group, that was introduced in \cite{SmollerWasserman}, will be recalled below in Section \ref{sect-main}.     

\begin{theorem}\label{thm-Morsenojump}
Assume that $G$ is nice, $L_0, L_1$ are invertible and $\mu_-(L_\lambda)<\infty$ for all $\lambda\in I$. If

\begin{align}\label{Morsenojump}
E^-(L_0)\ncong E^-(L_1),
\end{align}
where $\cong$ stands for isomorphic representations of $G$, then there is a bifurcation of critical points for $f_\lambda$ in $(0,1)$.
\end{theorem}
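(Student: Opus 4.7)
The plan is to deduce the statement from the bifurcation criterion for the equivariant spectral flow $\sind$ that this paper is built around (and that the authors presumably prove before the present theorem): for a continuous path $L=\{L_\lambda\}_{\lambda\in I}$ of $G$-equivariant selfadjoint Fredholm operators on $H$ with invertible endpoints, non-vanishing of $\sind(L)$ forces a bifurcation point in $(0,1)$ for any family whose Hessians along the trivial branch coincide with the $L_\lambda$. Applying this to the path defined by \eqref{introduction-Hessian}, it suffices to show $\sind(L)\neq 0$.

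The key step is the computation
\[\sind(L)=[E^-(L_1)]-[E^-(L_0)]\]
in the appropriate target group of $\sind$ (most naturally a quotient of the representation ring $R(G)$). Since each $\mu_-(L_\lambda)$ is finite, $E^-(L_\lambda)$ is a finite-dimensional $G$-subrepresentation, and the invertibility of $L_0$ and $L_1$ together with continuity of the family yields a $G$-invariant spectral gap near the endpoints. I would then homotope $L$, rel endpoints and inside the class of equivariant selfadjoint Fredholm paths, to a path respecting a fixed $G$-invariant splitting $H=H_+\oplus H_-$ with $H_-$ finite-dimensional, on which the restriction is negative definite and interpolates in its eigenspace decomposition between $E^-(L_0)$ and $E^-(L_1)$; in such a finite-dimensional normal form the equivariant spectral flow reduces, by construction, to the formal difference of the endpoint representation classes.

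The remaining ingredient is the niceness of $G$. By the Smoller--Wasserman definition (to be recalled in Section \ref{sect-main}) this property is precisely what guarantees that non-isomorphic finite-dimensional $G$-representations produce distinct classes in the target of $\sind$. Applied to the hypothesis $E^-(L_0)\ncong E^-(L_1)$, it yields $[E^-(L_1)]-[E^-(L_0)]\neq 0$, hence $\sind(L)\neq 0$, and the bifurcation criterion concludes the proof.

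I expect the main obstacle to be the normal-form reduction in the middle paragraph: verifying that an arbitrary admissible equivariant path with finite endpoint Morse indices is equivariantly homotopic, rel endpoints and within the class of paths on which $\sind$ is defined, to one whose negative part is finite-dimensional and of locally constant rank. This is where the precise framework for the equivariant spectral flow developed earlier, together with the finiteness assumption $\mu_-(L_\lambda)<\infty$, has to be used carefully; once this reduction is in place, the remainder is representation-theoretic bookkeeping governed by the niceness of $G$.
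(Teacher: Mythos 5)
Your overall route coincides with the paper's: Theorem~\ref{thm-Morsenojump} is obtained as a corollary of the main equivariant bifurcation criterion, Theorem~\ref{thm:main}, combined with the identity $\sfl_G(L)=[E^-(L_0)]-[E^-(L_1)]$ for paths in $\mathcal{FS}^+(H)^G$ (Proposition~\ref{prop-sfldiffMorse}, which the authors import from \cite{MJN21}). The genuine gap is your treatment of niceness. You claim that niceness is ``precisely what guarantees that non-isomorphic finite-dimensional $G$-representations produce distinct classes'' in the target of the equivariant spectral flow. That is not where it is used, and in fact that implication holds unconditionally: for a compact Lie group, the monoid of isomorphism classes of finite-dimensional representations under direct sum is cancellative (unique decomposition into irreducibles), so $[E]=[F]$ in the Grothendieck group $RO(G)$ if and only if $E\cong F$. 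Consequently $E^-(L_0)\ncong E^-(L_1)$ already forces $\sfl_G(L)\neq 0$ in $RO(G)$ with no appeal to niceness at all.

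Where niceness actually enters is inside the proof of Theorem~\ref{thm:main}: in the final Conley-index step one must pass from $[E^-(L_0)]\neq[E^-(L_1)]$ in $RO(G)$ to the statement that the pointed $G$-spaces $D(E^-(L_\lambda))/S(E^-(L_\lambda))$, $\lambda=0,1$, are not $G$-homotopy equivalent, and this is exactly the Smoller--Wasserman definition of a nice group. So if you invoke the bifurcation criterion as a black box, niceness should be carried along as one of its hypotheses rather than re-justified at the representation-theoretic step. A smaller remark: the formula $\sfl_G(L)=[E^-(L_0)]-[E^-(L_1)]$ for a path in $\mathcal{FS}^+(H)^G$ does not require the normal-form homotopy you sketch; it follows directly from the definition \eqref{sfl-equiv} by a telescoping argument using the fixed spectral projections, which is the content of Proposition~\ref{prop-sfldiffMorse}.
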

\noindent
Note that Theorem \ref{thm-Morsejump} follows from Theorem \ref{thm-Morsenojump} as isomorphic representations are of the same dimension and $\mu_-(L_\lambda)=\dim(E^-(L_\lambda))$. Smoller and Wasserman applied Theorem \ref{thm-Morsenojump} in \cite{SmollerWasserman} to study bifurcation of radial solutions of semilinear elliptic equations.\\
There have been various attempts to generalise Theorem \ref{thm-Morsejump} to the case when $\mu_-(L_\lambda)=\infty$. These were mostly tailored to specific applications like, e.g., bifurcation of branches of periodic solutions of Hamiltonian systems (cf. e.g. \cite{KryszewskiSzulkin}, \cite{Szulkin}). A very general approach to this problem was introduced by Fitzpatrick, Pejsachowicz and Recht in \cite{SFLPejsachowiczI}. The spectral flow is an integer-valued homotopy invariant that is defined for any path $L=\{L_\lambda\}_{\lambda\in I}$ of selfadjoint Fredholm operators that was introduced by Atiyah, Patodi and Singer in \cite{AtiyahPatodi} in connection with the Atiyah-Singer Index Theorem. Roughly speaking, the spectral flow counts the net number of eigenvalues crossing $0$ whilst the parameter $\lambda$ of the path traverses the interval. We recall the construction of the spectral flow below in Section \ref{sect-gequivsfl} and now just mention that if $\mu_-(L_\lambda)<\infty$ for all $\lambda\in I$, then 

\[\sfl(L)=\mu_-(L_0)-\mu_-(L_1).\]
Thus the following main theorem of \cite{SFLPejsachowiczI} is a natural generalisation of Theorem \ref{thm-Morsejump} which is applicable to any family of functionals $f_\lambda$ such that the associated Hessians $L_\lambda$ in \eqref{introduction-Hessian} are Fredholm operators.

\begin{theorem}\label{thm-FPR}
If $L_0$, $L_1$ are invertible and

\begin{align}\label{sflnontriv}
\sfl(L)\neq 0\in\mathbb{Z},
\end{align}
then there is a bifurcation point of critical points of $f$ in $(0,1)$.
\end{theorem}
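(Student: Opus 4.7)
The plan is to argue by contradiction and reduce the problem locally to the finite-dimensional setting of Theorem \ref{thm-Morsejump}.

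Suppose that $(0,1)$ contains no bifurcation point of critical points of $f$. Then every $(\lambda_0,0)\in I\times H$ admits an open neighbourhood in which the only solutions of $\nabla f_\lambda(u)=0$ lie on the trivial branch. Using the compactness of $I$, I would extract a uniform radius $r>0$ such that $\nabla f_\lambda(u)\neq 0$ whenever $\lambda\in I$ and $0<\|u\|\leq r$. Since invertibility of self-adjoint Fredholm operators is open and $L_0,L_1$ are invertible by hypothesis, the singular set $\Sigma := \{\lambda\in I:\ker L_\lambda\neq 0\}$ is a compact subset of $(0,1)$.

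Next I would localise the spectral flow around $\Sigma$. By additivity of $\sfl$ under concatenation of paths and its vanishing on intervals consisting entirely of invertible operators, $\Sigma$ can be covered by finitely many disjoint closed intervals $[a_i,b_i]\subset (0,1)$ with $L_{a_i}$, $L_{b_i}$ invertible and $\sfl(L) = \sum_i \sfl(L|_{[a_i,b_i]})$. On each $[a_i,b_i]$ the continuous dependence of the spectrum allows the choice of $\epsilon_i>0$ with $\pm\epsilon_i\notin\sigma(L_\lambda)$ for all $\lambda\in [a_i,b_i]$; letting $V(\lambda)$ be the range of the spectral projection of $L_\lambda$ onto $(-\epsilon_i,\epsilon_i)$, this produces a continuous finite-dimensional subbundle over the interval, necessarily trivialisable, with some model fibre $V_i$.

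A Lyapunov--Schmidt reduction relative to the splitting $H=V(\lambda)\oplus V(\lambda)^\perp$ now applies: since $L_\lambda|_{V(\lambda)^\perp}$ is invertible with uniform bounds on $[a_i,b_i]$, the implicit function theorem delivers a continuous family of $C^2$-functionals $\tilde f_\lambda:V_i\cap B_\rho\to\mathbb{R}$ whose critical points are in bijection with those of $f_\lambda$ near $0\in H$. The no-bifurcation assumption then forces $0$ to be the only critical point of each $\tilde f_\lambda$. The standard identification of the spectral flow with the Morse index change under such a reduction yields
\[
\sfl(L|_{[a_i,b_i]}) \;=\; \mu_-\bigl(D^2_0\tilde f_{a_i}\bigr) - \mu_-\bigl(D^2_0\tilde f_{b_i}\bigr).
\]
Applying the finite-dimensional case of Theorem \ref{thm-Morsejump} to $\{\tilde f_\lambda\}_{\lambda\in[a_i,b_i]}$, whose reduced Hessians at the endpoints are invertible and which by construction admits no bifurcation, forces the equality of these Morse indices, hence $\sfl(L|_{[a_i,b_i]})=0$. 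Summing over $i$ gives $\sfl(L)=0$, contradicting \eqref{sflnontriv}.

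The principal obstacle is the identification of $\sfl$ on $[a_i,b_i]$ with the Morse index jump of the reduced Hessians. For a general path of Fredholm operators without gradient structure, only the parity (the mod-$2$ reduction of $\sfl$) would be recovered this way; it is essential here that the reduction produces a genuinely self-adjoint finite-dimensional family, so that the full integer-valued spectral flow agrees with the change of Morse index along the reduced path.
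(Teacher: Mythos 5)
Your overall strategy — argue by contradiction, isolate the singular set, reduce to finite dimensions, and invoke the finite-dimensional bifurcation theorem — is the right skeleton and is broadly what the paper does (as the $G=\{e\}$ case of Theorem \ref{thm:main}). But the specific reduction mechanism you propose has two genuine gaps, both of which are precisely what the cogredient parametrix of Theorem \ref{thm-cogredpar} is designed to sidestep.

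First, the claim that on each $[a_i,b_i]$ there is a single $\epsilon_i>0$ with $\pm\epsilon_i\notin\sigma(L_\lambda)$ for all $\lambda\in[a_i,b_i]$ is not justified and is in general false. Near $0$ the spectrum of each $L_\lambda$ is discrete, but the isolated eigenvalues of the family can sweep through every value of a whole interval as $\lambda$ varies, so the set $\bigcup_{\lambda}\bigl(\sigma(L_\lambda)\cap(-\delta,\delta)\bigr)$ need not leave room for a common $\epsilon_i$. This is exactly why Phillips' definition \eqref{sfl} uses a partition of the interval with a \emph{different} gap $a_i$ on each subinterval. You would therefore have to subdivide further, obtaining spectral subbundles $V^{(j)}(\lambda)$ of varying ranks over the pieces; the reduced Morse indices then jump at the subdivision points (the same $L_{\lambda_j}$ restricted to $V^{(j)}(\lambda_j)$ and to $V^{(j+1)}(\lambda_j)$ has different Morse index whenever an eigenvalue lies between $\epsilon_{j+1}$ and $\epsilon_j$), and the telescoping that you need to recover $\sfl(L)$ does not come for free. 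Phillips' formula tracks $\dim E(L_{\lambda_j},[0,a_j])$, not Morse indices of the reduced blocks, and the dictionary between the two has to be set up carefully across the break points.

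Second, even on a single subinterval, Lemma \ref{lemma-implicit} (the equivariant implicit-function/Lyapunov–Schmidt reduction used in the paper) is stated for a \emph{fixed} orthogonal splitting $H=X\oplus Y$. You instead propose to reduce along the $\lambda$-dependent spectral splitting $H=V(\lambda)\oplus V(\lambda)^\perp$. That is not a direct application of the lemma: one must first trivialise the finite-rank subbundle $V(\cdot)$ by a continuous family of orthogonal operators $U_\lambda$ with $U_\lambda(V_i)=V(\lambda)$, pass to the conjugated functionals $f_\lambda\circ U_\lambda$, and check that this conjugation neither creates nor destroys bifurcation and preserves the spectral flow — none of which you address. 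The paper avoids this entirely: Theorem \ref{thm-cogredpar} conjugates the \emph{whole path} to the normal form $Q+K_\lambda$ with $K_\lambda$ compact, after which a single \emph{fixed} finite-dimensional $G$-invariant subspace $H_n$ works for every $\lambda$ (Lemma \ref{bif-lemma-isomorphisms}), and the identification $\sfl_G(L)=[E^-(L^n_0)]-[E^-(L^n_1)]$ in \eqref{eqref-finreduction} is then a direct consequence of \eqref{sflNormalisation}, \eqref{sflAdditivity} and Proposition \ref{prop-sfldiffMorse}. So the conclusion you reach is correct and the high-level plan matches the paper, but the reduction step as written does not go through; supplying the missing trivialisation and the careful bookkeeping at subdivision points would essentially amount to reconstructing the parametrix argument.
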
 
\noindent
Let us note that there are various efficient methods to compute the spectral flow, e.g., dimension reductions or crossing forms, that were in particular developed for applications in symplectic geometry (cf. e.g. \cite{SFLPejsachowiczII}, \cite{Robbin-Salamon}). These have yielded several bifurcation theorems for various types of differential equations, e.g., \cite{SFLPejsachowiczII,AleIchIndef,Homoclinics,Edinburgh}.\\
A natural question about Theorem \ref{thm-FPR} is if \eqref{sflnontriv} can be further relaxed. For example, if the spectra of the operators $L_\lambda$ are symmetric about $0$ by some symmetry of the functionals $f_\lambda$, then necessarily $\sfl(L)$ vanishes as net number of eigenvalues crossing through $0$. But if in this case there are pairs of eigenvalues crossing the axis in opposite direction, one might still have the idea that there should be a bifurcation. Interestingly, this is not the case by the following theorem of Alexander and Fitzpatrick from \cite{AFitzpatrick}.    

\begin{theorem}\label{bif-thm-AFi}
Let $L=\{L_\lambda\}_{\lambda\in I}$ be a path of selfadjoint Fredholm operators and $\lambda_0\in (0,1)$ such that $L_\lambda$ is invertible for $\lambda\neq \lambda_0$. If $\sfl(L)= 0$, then there exist an open interval $J\subset [0,1]$ containing $\lambda_0$, an open ball $B\subset H$ and a continuous family $f:J\times B\rightarrow\mathbb{R}$ of $C^2$-functionals such that $L_\lambda$ are the Hessians of $f_\lambda$ at $0\in H$ and $\nabla f_\lambda(0)=0$ holds for $\lambda\in J$, but there is no bifurcation of critical points for $f$ in $J$.
\end{theorem}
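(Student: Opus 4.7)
The plan is to reduce to a finite-dimensional model via spectral projection, then exploit $\sfl(L)=0$ to build the finite-dimensional functional explicitly. Since $L_{\lambda_0}$ is selfadjoint Fredholm with $\dim\ker L_{\lambda_0}=d<\infty$, I pick $\delta>0$ with $\sigma(L_{\lambda_0})\cap[-2\delta,2\delta]=\{0\}$ and shrink the conclusion's interval to an open neighborhood $J\ni\lambda_0$ on which the Riesz projection $P_\lambda=\frac{1}{2\pi i}\oint_{|\zeta|=\delta}(\zeta-L_\lambda)^{-1}\,d\zeta$ is a norm-continuous family of orthogonal projections of constant rank $d$, $V_\lambda:=\im P_\lambda$ is $L_\lambda$-invariant, and $L_\lambda|_{V_\lambda^\perp}$ is uniformly invertible on $J$. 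Contractibility of $J$ lets me trivialize the bundle $\{V_\lambda\}$ by a continuous family of isometries $U_\lambda\colon V\to V_\lambda$ (with $V:=V_{\lambda_0}$), yielding a continuous path $M_\lambda:=U_\lambda^*L_\lambda U_\lambda$ of symmetric operators on the fixed $d$-dimensional $V$, invertible precisely for $\lambda\neq\lambda_0$ and with $\sfl(M|_J)=\sfl(L|_J)=0$. Setting
\[f_\lambda(u):=g_\lambda(U_\lambda^*P_\lambda u)+\tfrac12\langle L_\lambda(I-P_\lambda)u,(I-P_\lambda)u\rangle_H\]
reduces the problem to constructing $g_\lambda\colon V\to\mathbb{R}$ on an open ball $B'\subset V$ with $D^2g_\lambda(0)=M_\lambda$ and $\nabla g_\lambda$ nonvanishing off the origin; the commutation $[L_\lambda,P_\lambda]=0$ ensures the Hessian of $f_\lambda$ at $0$ is $L_\lambda$, and decoupling of critical-point equations along $V_\lambda\oplus V_\lambda^\perp$ forces $u=0$ as the only critical point of $f_\lambda$.

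Next I use $\sfl=0$. Since $\mu_-(M_\lambda)$ is constant on the two components of $J\setminus\{\lambda_0\}$ by invertibility of $M_\lambda$ there, and $\sfl(M|_J)=0$, the two values agree; hence the boundary values of $M|_J$ lie in the same path component of the space of invertible symmetric operators on $V$. I then choose a continuous path $\tilde M_\lambda$ of invertible symmetric operators on $V$ with the same boundary values as $M|_J$, taken generically so that the compression $P_K\tilde M_{\lambda_0}|_K$ is nonsingular on $K:=\ker M_{\lambda_0}$, and define
\[g_\lambda(v):=\tfrac12\langle M_\lambda v,v\rangle-\tfrac{1}{2\rho^2}\,\|v\|^2\,\langle(M_\lambda-\tilde M_\lambda)v,v\rangle\]
for a small $\rho>0$. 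This is a quadratic-plus-quartic polynomial whose Hessian at $0$ is exactly $M_\lambda$ (the quartic correction contributes $0$ there), which behaves like the quadratic form of $M_\lambda$ very near the origin but whose gradient at larger radius is steered toward that of the invertible $\tilde M_\lambda$.

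The hard part is verifying that $\nabla g_\lambda$ has no zero besides the origin, uniformly for $\lambda$ in a (possibly smaller) neighborhood of $\lambda_0$. At $\lambda=\lambda_0$ with $v\in K\setminus\{0\}$ one computes directly that $\nabla g_{\lambda_0}(v)=\rho^{-2}\bigl(\|v\|^2\tilde M_{\lambda_0}v+\langle\tilde M_{\lambda_0}v,v\rangle v\bigr)$, and this vanishes only when $v$ is an eigenvector of $\tilde M_{\lambda_0}$ whose eigenvalue equals minus its own Rayleigh quotient, hence is zero, contradicting invertibility of $\tilde M_{\lambda_0}$. For general $v=k+w\in K\oplus K^\perp$ at $\lambda_0$, a Lyapunov--Schmidt decomposition reduces the $K$-component of $\nabla g_{\lambda_0}(v)=0$ to an analogous Rayleigh-quotient obstruction involving $P_K\tilde M_{\lambda_0}|_K$, forcing $k=0$; the $K^\perp$-component then collapses to $M_{\lambda_0}w=O(\|w\|^3/\rho^2)$, and invertibility of $M_{\lambda_0}|_{K^\perp}$ forces $w=0$ for $\|w\|$ small compared to $\rho$. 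Extending this to $\lambda$ near $\lambda_0$ is a continuity-plus-compactness argument on the reduced system, in which shrinking $J$ and $\rho$ if necessary pushes any incipient nontrivial critical points (originating from small negative eigenvalues of the reduced operator on $K$) outside $B'$; this uniformity in $\lambda$ is the technical heart of the argument. The hypothesis $\sfl(L)=0$ enters essentially through the existence of the invertible reference path $\tilde M_\lambda$, and without it no such path exists, consistently with Theorem \ref{thm-FPR}.
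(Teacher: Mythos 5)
The paper does not prove Theorem~\ref{bif-thm-AFi}; it cites it without proof from Alexander and Fitzpatrick \cite{AFitzpatrick}, so there is no in-paper proof to compare against. Judged on its own terms, your outline has a genuine gap: the quartic correction you write down does not in general eliminate bifurcation, and the two conditions you impose on the auxiliary path $\tilde M_\lambda$ (matching endpoints and nonsingularity of $P_K\tilde M_{\lambda_0}|_K$) are not enough.

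Here is a concrete failure. Take $V=\mathbb{R}^2$, $\lambda_0=0$, $M_\lambda=\lambda\,\mathrm{diag}(1,-1)$; both endpoints have Morse index $1$, so $\sfl=0$, and $K=\ker M_0=V$. Choose $\tilde M_\lambda$ constant equal to $\mathrm{diag}(-1,2)$ near $\lambda_0$ (this has Morse index $1$, so it can be connected to the endpoints of $M|_J$ inside the invertible symmetric matrices, and $P_K\tilde M_0|_K=\tilde M_0$ is nonsingular). Writing $v=(x,y)$ and $N_\lambda=M_\lambda-\tilde M_\lambda=\mathrm{diag}(\lambda+1,-\lambda-2)$, your functional becomes
\[
g_\lambda(x,y)=\tfrac{\lambda}{2}(x^2-y^2)-\tfrac{x^2+y^2}{2\rho^2}\bigl[(\lambda+1)x^2-(\lambda+2)y^2\bigr],
\]
and a direct computation gives
\[
\partial_y g_\lambda(0,y)=y\Bigl[-\lambda+\tfrac{2(\lambda+2)}{\rho^2}y^2\Bigr],\qquad \partial_x g_\lambda(0,y)=0.
\]
So for every small $\lambda>0$ the points $\bigl(0,\pm\rho\sqrt{\lambda/(2(\lambda+2))}\bigr)$ are nontrivial critical points of $g_\lambda$, and they converge to $(0,0)$ as $\lambda\to 0^+$. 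Thus $\lambda_0$ \emph{is} a bifurcation point of the constructed family, contradicting what you need. Your verification at $\lambda=\lambda_0$ exactly (the Rayleigh-quotient argument) is correct, but the bifurcating branch appears for $\lambda$ slightly past $\lambda_0$, which is precisely the regime you deferred to a ``continuity-plus-compactness'' argument.

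The underlying issue is that after passing to the kernel, the quartic term must \emph{couple} the positive and negative subspaces of the crossing form $P_K\dot M_{\lambda_0}|_K$; if instead $\tilde M_{\lambda_0}$ is simultaneously diagonalisable with that crossing form (as in the example above), the saddle-node mechanism in the counterexample produces bifurcation. The working choice in this $2\times2$ model is the off-diagonal $\tilde M_{\lambda_0}=\left(\begin{smallmatrix}0&1\\ 1&0\end{smallmatrix}\right)$, reproducing the classical no-bifurcation example $g_\lambda(x,y)=\tfrac\lambda2(x^2-y^2)+xy(x^2+y^2)$, but this is a very special, non-generic choice rather than a generic one. So the $\sfl=0$ hypothesis does more than guarantee the existence of \emph{some} invertible reference path with matching endpoints; the proof needs a carefully tailored $\tilde M$ (or a different correction term) whose interaction with the crossing form on $K$ is controlled, and your outline neither identifies this condition nor shows it can be met. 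The finite-dimensional reduction in your first paragraph is fine and is the standard Lyapunov--Schmidt step; the gap is entirely in the construction of $g_\lambda$.
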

\noindent
The previous theorem suggests that Theorem \ref{thm-FPR} is optimal and we will show in Section \ref{sect-simpEx} that the predicted phenomenon is not at all pathologic. Indeed there are natural examples of differential equations to which Theorem \ref{bif-thm-AFi} applies.\\
The aim of this work is to introduce a result that generalises all previously mentioned theorems about the existence of bifurcation and which shows that despite of Theorem \ref{bif-thm-AFi} there is still bifurcation under a suitable symmetry assumption on the functionals $f_\lambda$. The authors recently introduced in \cite{MJN21} the $G$-equivariant spectral flow $\sfl_G(L)$ for paths of selfadjoint Fredholm operators $L=\{L_\lambda\}_{\lambda\in I}$ that are equivariant under the orthogonal action of a compact Lie group, i.e., $L_\lambda(gu)=g(L_\lambda u)$ for all $u\in H$ and $g\in G$. This novel homotopy invariant is an element of the representation ring $RO(G)$ that was introduced by Segal in \cite{Segal}. It was shown in \cite{MJN21} that 

\[F(\sfl_G(L))=\sfl(L)\in\mathbb{Z}\]
by a natural homomorphism $F:RO(G)\rightarrow\mathbb{Z}$ that we recall below in Section \ref{sect-gequivsfl}. Thus even if the spectral flow $\sfl(L)\in\mathbb{Z}$ vanishes, $\sfl_G(L)$ can be non-trivial in $RO(G)$.  Moreover, if $\mu_-(L_\lambda)<\infty$, then 

\begin{align}\label{equ-GequivMorse}
\sfl_G(L)=[E^-(L_0)]-[E^-(L_1)]\in RO(G),
\end{align}
where the square brackets stand for isomorphism classes of representations of the compact Lie group $G$. Consequently, if in this case $\sfl_G(H)\neq 0\in RO(G)$, then $E^-(L_0)$ and $E^-(L_1)$ are non-isomorphic $G$-representations and thus \eqref{Morsenojump} holds. The main theorem of this work states that if $L_0$, $L_1$ are invertible and $\sfl_G(L)$ is non-trivial in the representation ring $RO(G)$ of the compact Lie group $G$, then there is a bifurcation of critical points of $f$. As already said, this is a strong statement as it implies Theorem \ref{thm-Morsenojump} and Theorem \ref{thm-FPR} (and thus ultimately also Theorem \ref{thm-Morsejump}). The price to pay is that it is a rather challenging task to join the proofs of Theorem \ref{thm-Morsenojump} and Theorem \ref{thm-FPR} and the majority of this paper is devoted to this issue. On the other hand, it turns out that $\sfl_G(L)$ is quite applicable. Indeed, major tools for computing the classical spectral flow $\sfl(L)$ in $\mathbb{Z}$ carry over to the $G$-equivariant case in $RO(G)$ and pave the way to various applications to Hamiltonian systems and PDEs that are invariant under actions of subgroups of the matrix groups $O(n)$ or $SO(n)$. These methods to compute $\sfl_G(L)$ and applications of them will be introduced in a second part of this work. Here we restrict to the simplest non-trivial group $\mathbb{Z}_2$ and consider even functionals $f_\lambda$ as in \cite{Clapp}. In our first application we study Dirichlet boundary value problems for systems of non-linear PDEs. We construct a natural class of examples where Theorem \ref{thm-FPR} fails but it is readily seen that $\sfl_G(L)$ is non-trivial in $RO(G)$, and thus there is a bifurcation that would not have been found without taking the group action into account. As second class of examples, we consider bifurcation of homoclinic solutions of Hamiltonian systems. There have been many studies devoted to bifurcation for functionals that are invariant under orthogonal actions of compact Lie groups by degree theory (see e.g. \cite{shortdegree} and ref. therein). As those methods only apply to functionals where the Hessians $L_\lambda$ are compact perturbations of a fixed operator, homoclinic solutions are out of their scope. We study even Hamiltonian systems and construct an example for which we can show bifurcation of homoclinic solutions by the main theorem of this work. Moreover, as the classical spectral flow $\sfl(L)$ vanishes in our example, also Theorem \ref{thm-FPR} is not applicable. This shall emphasize the strength of our findings and should be an appetizer for the study of continuous groups in an upcoming second part.\\
The paper is structured as follows. In the next two sections we recap the classical spectral flow, introduce the $G$-equivariant spectral flow from \cite{MJN21} and state our main theorem. Section \ref{sect-cogredient} provides a first pillar of the proof of our main theorem. We show the existence of a $G$-equivariant parametrix for any path of $G$-equivariant selfadjoint Fredholm operators, which allows to reduce a path of Hessians to a normal form. In Section \ref{sect-thm:main} we use the result of Theorem  \ref{thm-cogredpar} to prove Theorem \ref{thm:main}, which we do in several steps. In the final section of this paper we discuss the announced examples of even functionals that should stress the high applicability of our work.



\section{The $G$-equivariant Spectral Flow}\label{sect-gequivsfl}
The aim of this section is to introduce the $G$-equivariant spectral flow, where we follow the authors recent work \cite{MJN21}. Let us first recap the definition of the classical spectral flow.\\ 
Let $H$ be a real separable Hilbert space and let $\mathcal{FS}(H)$ be the set of all selfadjoint Fredholm operators on $H$ with the norm topology. It was shown by Atiyah and Singer in \cite{AtiyahSinger} that $\mathcal{FS}(H)$ has three connected components

\begin{align*}
\mathcal{FS}^+(H)&:=\{T\in\mathcal{FS}(H):\, \sigma_{ess}(T)\subset(0,+\infty)\}\\
\mathcal{FS}^-(H)&:=\{T\in\mathcal{FS}(H):\, \sigma_{ess}(T)\subset(-\infty,0)\},
\end{align*}

and
\begin{align*}
\mathcal{FS}^i(H):=\mathcal{FS}(H)\setminus\mathcal{FS}^\pm(H),
\end{align*}
where $\sigma_{ess}(T)$ denotes the essential spectrum, i.e., the set of all $\lambda\in\mathbb{R}$ such that $\lambda-T$ is not a Fredholm operator. The operators in $\mathcal{FS}^+(H)$ have a finite Morse index

\begin{align}\label{Morse}
\mu_-(T)=\dim\left(\oplus_{\mu<0}\{u\in H:\, T u=\mu u\}\right),
\end{align}
i.e., they have at most finitely many negative eigenvalues including multiplicities. In general, for every $T\in\mathcal{FS}(H)$, there is a neighbourhood of $0$ that either belongs to the resolvent set or it contains finitely many eigenvalues including multiplicities (cf. \cite{ProcHan}, \cite{Fredholm}).\\
Let now $L=\{L_\lambda\}_{\lambda\in I}$ be a path in $\mathcal{FS}(H)$. As the spectra of the operators $L_\lambda$ cannot accumulate at $0$, it can be shown that there is a partition $0=\lambda_0<\ldots< \lambda_N=1$ of the unit interval and $a_i>0$, $i=1,\ldots N$, such that $[\lambda_{i-1},\lambda_i]\ni \lambda\mapsto\chi_{[-a_i,a_i]}(L_\lambda)\in\mathcal{L}(H)$ are continuous families of finite rank projections, where $\chi_{[a,b]}(T)$ denotes the spectral projection of a selfadjoint operator $T$ with respect to the interval $[a,b]\subset\mathbb{R}$. Then, for $i=1,\ldots,N$, the spaces $E(L_\lambda,[0,a_i]):=\im(\chi_{[0,a_i]}(L_\lambda))$, $\lambda_{i-1}\leq \lambda\leq \lambda_i$, are finite dimensional and the spectral flow of the path $L$ was defined by Phillips in \cite{Phillips} by

\begin{align}\label{sfl}
\sfl(L)=\sum^N_{i=1}{(\dim(E(L_{\lambda_i},[0,a_i]))-\dim(E(L_{\lambda_{i-1}},[0,a_i])))}\in\mathbb{Z}.
\end{align}  
Note that $E(L_\lambda,[0,a])$ is the direct sum of the eigenspaces of $L_\lambda$ for eigenvalues in the interval $[0,a]$.\\ 
Let now $G$ be a compact Lie group. A (real) representation of $G$ is a pair $(V,\rho)$ consisting of a finite dimensional (real) vector space $V$ and a group homomorphism $\rho:G\rightarrow\GL(V)$. Two representations $(V_1,\rho_1)$, $(V_2,\rho_2)$ of $G$ are isomorphic if there is an isomorphism $\alpha:V_1\rightarrow V_2$ that is $G$-equivariant, i.e., $\rho_2(g)\circ \alpha=\alpha\circ\rho_1(g)$ for all $g\in G$. Two representations of $G$ can be added by the direct sum and this turns the set of isomorphism classes of representations of $G$ into a commutative monoid. The elements of the associated Grothendieck group $RO(G)$ are formal differences $[U]-[V]$ of isomorphism classes of $G$-representations modulo the equivalence relation generated by $[U]-[V]\sim [U\oplus W]-[V\oplus W]$. The neutral element in $RO(G)$ is $[V]-[V]$ for any $G$-representation $V$, and the inverse element of $[U]-[V]$ is $[V]-[U]$. $RO(G)$ was introduced by Segal in \cite{Segal}, who called it the representation ring of $G$ as the tensor product of representations actually yields a ring structure on this Grothendieck group. We follow this terminology even though we will never use the ring structure of $RO(G)$ and consider it merely as an abelian group.\\
Let now $G$ be a compact Lie group that acts orthogonally on $H$. We denote by $\mathcal{FS}(H)^G$ the set of $G$-equivariant selfadjoint Fredholm operators, i.e.,

\[T(gu)=g(T u),\qquad u\in H,\quad g\in G,\]   
and by $\mathcal{FS}^\pm(H)^G$ and $\mathcal{FS}^i(H)^G$ the corresponding subsets of the connected components of $\mathcal{FS}(H)$.\\
Let now $L=\{L_\lambda\}_{\lambda\in I}$ be a path in $\mathcal{FS}(H)^G$. As the operators $L_\lambda$ are $G$-equivariant, it follows that the spaces $E(L_{\lambda},[0,a])$ in \eqref{sfl} are $G$-invariant. Thus they define equivalence classes of $G$-representations and consequently the idea of \eqref{sfl} carries over to $RO(G)$ by setting   

\begin{align}\label{sfl-equiv}
\sfl_G(L)=\sum^N_{i=1}{([E(L_{\lambda_i},[0,a_i])]-[E(L_{\lambda_{i-1}},[0,a_i])])}\in RO(G).
\end{align} 
Phillips proved in \cite{Phillips} that \eqref{sfl} only depends on the path $L$ and not on the choices of the partition $0=\lambda_0<\ldots< \lambda_N=1$ of the unit interval and the numbers $a_i>0$. Recently, the authors showed in \cite{MJN21} that the same is true for \eqref{sfl-equiv} in $RO(G)$, and thus this equivariant spectral flow is well defined. Note that if $G$ is trivial, then representations are isomorphic if and only if they are of the same dimension. Hence $RO(G)\cong\mathbb{Z}$ in this case and \eqref{sfl-equiv} can be identified with the ordinary spectral flow \eqref{sfl}. In general, there is a canonical homomorphism 

\[F:RO(G)\rightarrow\mathbb{Z},\quad [U]-[V]\mapsto\dim(U)-\dim(V),\]
and it follows from \eqref{sfl} and \eqref{sfl-equiv} that

\begin{align}\label{forgetfull}
F(\sfl_G(L))=\sfl(L).
\end{align}
Consequently, the classical spectral flow of $L$ has to vanish if $\sfl_G(L)$ is trivial. On the other hand, in \cite{MJN21} there is a simple example of a path of $G=\mathbb{Z}_2$-equivariant operators such that $\sfl_G(L)\in RO(G)\cong\mathbb{Z}\oplus\mathbb{Z}$ is non-trivial even though $\sfl(L)=0\in\mathbb{Z}$.\\
Finally, it was shown in \cite{MJN21} that all basic properties of the spectral flow hold mutatis mutandis for its $G$-equivariant generalisation \eqref{sfl-equiv}, e.g.,

\begin{itemize}
 \item[(i)] If $L_\lambda\in\GL(H)\cap\mathcal{FS}(H)^G$ for all $\lambda\in I$, then 
 
 \begin{align}\label{sflNormalisation}
 \sfl_G(L)=0\in RO(G).
 \end{align}
 \item[(ii)] Let $H=H_1\oplus H_2$, where $H_1$, $H_2$ are $G$-invariant and such that $L_\lambda\mid_{H_i}\in\mathcal{FS}(H_i)^G$ for $i=1,2$,  $\lambda\in I$. Then

\begin{align}\label{sflAdditivity}
\sfl_G(L)=\sfl_G(L\mid_{H_1})+\sfl_G(L\mid_{H_2})\in RO(G).
\end{align}
\item[(iii)] If $L,L'$ are two paths in $\mathcal{FS}(H)^G$ such that $L_1=L'_0$, then

\begin{align}\label{concatenation}
\sfl_G(L\ast L')=\sfl_G(L)+\sfl_G(L')\in RO(G),
\end{align} 
where $L\ast L'$ denotes the concatenation of $L'$ and $L$.
\item[(iv)] If the path $L^-$ is defined by $L^-_\lambda=L_{1-\lambda}$, $\lambda\in I$, then

\begin{align}\label{reversal}
\sfl_G(L^-)=-\sfl_G(L)\in RO(G).
\end{align}
\item[(v)] If $h:I\times I\rightarrow\mathcal{FS}(H)^G$ is a homotopy such that $h(s,0)$ and $h(s,1)$ are invertible for all $s\in I$, then

\begin{align}\label{sflHomotopy}
\sfl_G(h(0,\cdot))=\sfl_G(h(1,\cdot))\in RO(G).
\end{align}
\end{itemize}
Moreover, let us recall the following proposition from \cite[Prop. 3.2]{MJN21}, where $E^-(T)$ denotes the direct sum of the eigenspaces with respect to negative eigenvalues of an operator $T\in\mathcal{FS}^+(H)$.

\begin{prop}\label{prop-sfldiffMorse}
If $L=\{L_\lambda\}_{\lambda\in I}$ is a path in $\mathcal{FS}^+(H)^G$, then

\[\sfl_G(L)=[E^-(L_0)]-[E^-(L_1)]\in RO(G).\]
\end{prop}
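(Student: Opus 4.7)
The plan is to reduce to each subinterval of the partition $0=\lambda_0<\ldots<\lambda_N=1$ used in the definition \eqref{sfl-equiv} of $\sfl_G(L)$ and then telescope. Let $a_i>0$ be as in that definition on $[\lambda_{i-1},\lambda_i]$. Since $L$ is norm-continuous on the compact interval $I$, there is $M>0$ with $\sup_{\lambda\in I}\|L_\lambda\|\leq M$, so if we fix any $b>M$, then $-b$ lies in the resolvent of every $L_\lambda$ and $[-b,0)$ contains all the (finitely many) negative eigenvalues of each $L_\lambda\in\mathcal{FS}^+(H)^G$. The spectral theorem then gives a $G$-invariant orthogonal decomposition
\begin{align*}
E(L_\lambda,[-b,a_i])=E^-(L_\lambda)\oplus E(L_\lambda,[0,a_i]).
\end{align*}

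The key step is to show that $[E(L_\lambda,[-b,a_i])]$ is constant in $RO(G)$ on each subinterval $[\lambda_{i-1},\lambda_i]$. For this one writes
\begin{align*}
P_\lambda:=\chi_{[-b,a_i]}(L_\lambda)=\frac{1}{2\pi i}\oint_\Gamma (z-L_\lambda)^{-1}\,dz,
\end{align*}
with $\Gamma$ a rectangular contour in $\mathbb{C}$ enclosing $[-b,a_i]$ and avoiding $\sigma(L_\lambda)\setminus[-b,a_i]$ for every $\lambda\in[\lambda_{i-1},\lambda_i]$. Such a $\Gamma$ exists after possibly refining the partition, since the essential spectrum of each $L_\lambda\in\mathcal{FS}^+(H)$ lies in $(0,\infty)$ and $\sigma(L_\lambda)\cap[-b,a_i]$ consists of finitely many eigenvalues of finite multiplicity which remain uniformly separated from $\sigma(L_\lambda)\cap(a_i,M]$. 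Consequently $P_\lambda$ is a norm-continuous family of finite-rank $G$-equivariant projections, and the Kato-type transformation
\begin{align*}
U_\lambda=\bigl(1-(P_\lambda-P_{\lambda_{i-1}})^2\bigr)^{-1/2}\bigl(P_\lambda P_{\lambda_{i-1}}+(1-P_\lambda)(1-P_{\lambda_{i-1}})\bigr)
\end{align*}
yields a continuous family of $G$-equivariant invertible operators with $U_\lambda\,\im P_{\lambda_{i-1}}=\im P_\lambda$, so that $\im P_\lambda\cong\im P_{\lambda_{i-1}}$ as $G$-representations on this subinterval.

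Combining the two ingredients at $\lambda=\lambda_{i-1}$ and $\lambda=\lambda_i$ gives, for each $i$,
\begin{align*}
[E(L_{\lambda_i},[0,a_i])]-[E(L_{\lambda_{i-1}},[0,a_i])]=[E^-(L_{\lambda_{i-1}})]-[E^-(L_{\lambda_i})]\in RO(G).
\end{align*}
Summation over $i$ produces $\sfl_G(L)$ on the left by \eqref{sfl-equiv} and telescopes to $[E^-(L_0)]-[E^-(L_1)]$ on the right, finishing the proof. The main technical obstacle is producing the contour $\Gamma$ with the required uniform separation property on each subinterval; once this is in place the argument reduces to bookkeeping, and it is precisely the one-sided nature of the essential spectrum of operators in $\mathcal{FS}^+(H)$ that permits the asymmetric enlargement of $[-a_i,a_i]$ to $[-b,a_i]$ without losing finite rank.
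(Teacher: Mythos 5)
The paper does not actually prove Proposition \ref{prop-sfldiffMorse} --- it is explicitly recalled from \cite[Prop.\ 3.2]{MJN21} --- so there is no in-paper argument to compare against directly. Judged on its own merits, your proposal is a correct, self-contained proof, and it uses the same basic tool (Kato's transformation between nearby spectral projections, combined with $G$-equivariance via Lemma~\ref{FunctCalc}) that the authors deploy elsewhere in the paper, e.g.\ when they prove $[E^-(M^n_{(0,\lambda)})]=[E^-(M^n_{(1,\lambda)})]$ in Section~\ref{sect-thm:main}. The one-sided enlargement of $[-a_i,a_i]$ to $[-b,a_i]$ is exactly the right move: since $\sigma_{ess}(L_\lambda)\subset(a_i,\infty)$ on each subinterval and $\sigma(L_\lambda)\subset(-b,b)$, the enlarged spectral projection stays finite rank, $G$-equivariant and Riesz-continuous, and the $RO(G)$-identity $[E(L_\lambda,[-b,a_i])]=[E^-(L_\lambda)]+[E(L_\lambda,[0,a_i])]$ then produces the telescoping formula.

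One small point to tighten: the Kato operator
\begin{align*}
U_\lambda=\bigl(1-(P_\lambda-P_{\lambda_{i-1}})^2\bigr)^{-1/2}\bigl(P_\lambda P_{\lambda_{i-1}}+(1-P_\lambda)(1-P_{\lambda_{i-1}})\bigr)
\end{align*}
is only defined (and invertible) when $\|P_\lambda-P_{\lambda_{i-1}}\|<1$, which you cannot guarantee across the whole subinterval $[\lambda_{i-1},\lambda_i]$. You should either subdivide each $[\lambda_{i-1},\lambda_i]$ further by compactness of the norm-continuous family $P$, as you do implicitly for the contour, or simply observe that local constancy of $[\im P_\lambda]\in RO(G)$ plus connectedness of $[\lambda_{i-1},\lambda_i]$ already gives global constancy. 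Either fix is routine and does not affect the validity of the argument.
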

\noindent 
Finally, let us note that further properties of the $G$-equivariant spectral flow, and in particular a generalisation of \eqref{sflHomotopy} to homotopies with non-invertible endpoints, can be found in \cite[\S 2.3]{MJN21}. The latter is necessary for the following proposition about compact perturbations in $\mathcal{FS}(H)^G$.

\begin{prop}\label{prop-FixedPerturbation}
Let $L=\{L_\lambda\}_{\lambda\in I}$ and $L'=\{L'_\lambda\}_{\lambda\in I}$ be paths in $\mathcal{FS}(H)^G$ such that $L_0=L'_0$, $L_1=L'_1$ and $L_\lambda-L'_\lambda$ is compact for all $\lambda\in I$. Then

\[\sfl_G(L)=\sfl_G(L').\] 
\end{prop}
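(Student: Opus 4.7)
The natural strategy is a homotopy argument. Consider the straight-line homotopy
\[
h(s,\lambda):=(1-s)L_\lambda+s L'_\lambda,\qquad (s,\lambda)\in I\times I,
\]
which joins $h(0,\cdot)=L$ to $h(1,\cdot)=L'$. The first step is to verify that $h$ takes values in $\mathcal{FS}(H)^G$: the difference $h(s,\lambda)-L_\lambda=s\,(L'_\lambda-L_\lambda)$ is compact by hypothesis, hence $h(s,\lambda)$ is Fredholm as a compact perturbation of the Fredholm operator $L_\lambda$. As a convex combination of self-adjoint $G$-equivariant operators it is itself self-adjoint and $G$-equivariant, and continuity in $(s,\lambda)$ is inherited from the continuity of $L$ and $L'$.

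Since $L_0=L'_0$ and $L_1=L'_1$, the two side paths $s\mapsto h(s,0)=L_0$ and $s\mapsto h(s,1)=L_1$ are \emph{constant} in $s$, so they have vanishing $G$-equivariant spectral flow by direct inspection of the defining formula \eqref{sfl-equiv}. If $L_0$ and $L_1$ were additionally invertible, the desired equality $\sfl_G(L)=\sfl_G(L')$ would follow immediately from the homotopy invariance \eqref{sflHomotopy}. The proposition does not assume this invertibility, and circumventing it at the two endpoints is the only real obstacle of the proof.

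This is precisely the purpose of the generalisation of \eqref{sflHomotopy} to homotopies with non-invertible endpoints from \cite[\S 2.3]{MJN21}: it expresses $\sfl_G(h(0,\cdot))-\sfl_G(h(1,\cdot))$ through the spectral flows of the two side paths even when the common endpoints fail to be invertible. Since in our situation the side paths are constant and therefore contribute zero, the claim $\sfl_G(L)=\sfl_G(L')$ follows. The main work of the proof is thus entirely encapsulated in this strengthened homotopy invariance borrowed from \cite{MJN21}; once it is available, the remainder is the elementary check above that the straight-line homotopy is admissible in $\mathcal{FS}(H)^G$.
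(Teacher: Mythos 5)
Your proof is correct and follows essentially the same route as the paper: both use the straight-line (affine) homotopy between $L$ and $L'$, observe that $L_\lambda - L'_\lambda$ being compact keeps the homotopy in $\mathcal{FS}(H)^G$, note that $L_0=L'_0$ and $L_1=L'_1$ make the side paths constant, and conclude by the strengthened homotopy invariance for homotopies with fixed (possibly non-invertible) endpoints from \cite[\S 2.3]{MJN21}. The paper cites this fact precisely as \cite[Cor. 2.9]{MJN21}, so your appeal to the same tool is exactly what is intended.
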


\begin{proof}
We set $K_\lambda=L_\lambda-L'_\lambda$ and first note that by assumption $K_0=K_1=0$. Thus

\[h:I\times I\rightarrow\mathcal{FS}(H)^G,\qquad h(s,\lambda)=L'_\lambda+sK_\lambda\]
is a homotopy with fixed endpoints such that $h(0,\lambda)=L'_\lambda$ and $h(1,\lambda)=L_\lambda$. This shows the claimed equality as the spectral flow is invariant under homotopies with fixed endpoints by \cite[Cor. 2.9]{MJN21}.   
\end{proof}
\noindent
We note that the previous proposition in particular applies to the case that $L_\lambda=A+K_\lambda$ and $L'_\lambda=A+K_0$, $\lambda\in I$, for some fixed $A\in\mathcal{FS}(H)^G$ and a closed path $\{K_\lambda\}_{\lambda\in I}$ of compact selfadjoint $G$-equivariant operators. Then $\sfl_G(L)=\sfl_G(L')=0\in RO(G)$ as it directly follows from the definition \eqref{sfl-equiv} that the spectral flow of a constant path vanishes.


\section{Main Theorem and Corollaries}\label{sect-main}
We consider equations of the type $\nabla f_\lambda(u)=0$, where $f:I\times H\rightarrow \mathbb{R}$ is a family of $C^2$-functionals on an infinite dimensional real separable Hilbert space $H$, and we assume that $\nabla f_\lambda(0)=0$ for all $\lambda\in I$, i.e. $0\in H$ is a critical point of all functionals $f_\lambda$. A \textit{bifurcation point} is a parameter value $\lambda^\ast\in I$ at which non-trivial critical points branch off from the trivial ones $I\times\{0\}$, i.e., in every neighbourhood of $(\lambda^\ast,0)\in I\times H$ there is some $(\lambda,u)$ such that $\nabla f_\lambda(u)=0$ and $u\neq 0$. A crucial role for studying the existence of bifurcation points is played by the family of Hessians $L_\lambda$, which are bounded selfadjoint operators on $H$ that are induced by the second derivatives $D^2_0f_\lambda$ of $f_\lambda$ at $0\in H$ as in \eqref{introduction-Hessian}. It is a common assumption that the operators $L_\lambda$ are Fredholm, i.e., they are elements of the space $\mathcal{FS}(H)$ introduced in the previous section. Let us now assume in addition that each $f_\lambda$ is $G$-invariant, i.e., $f(g u)=f(u)$, $g\in G$, where $G$ is a compact Lie group acting orthogonally on $H$. Under this assumption, the operators $L_\lambda$ are $G$-equivariant, i.e. $L_\lambda(gu)=g(L_\lambda u)$ by \cite[\S 1.3]{IzeVignoli}. Thus the $G$-equivariant spectral flow $\sfl_G(L)$ is defined.\\
Henceforth, we assume that $G$ is \textit{nice} in the sense of Smoller and Wasserman's work \cite{SmollerWasserman}, i.e., any orthogonal representations $E$ and $F$ of $G$ are isomorphic if the quotients $D(E)/S(E)$ and $D(F)/S(F)$ of the unit discs by the unit spheres have the same $G$-equivariant homotopy type. Let us stress that, e.g., if $G_0$ denotes the connected component of the identity in $G$, then $G$ is nice if $G/G_0$ is trivial or a finite product of $\mathbb{Z}_2$ or $\mathbb{Z}_3$. Thus, in particular, $S^1$, $O(n)$ and $SO(n)$ are nice. The following theorem is the main result of this work. 

\begin{theorem}\label{thm:main}
If $L_\lambda\in\mathcal{FS}(H)^G$, $\lambda\in I$, $L_0$,$L_1$ are invertible and 

\[\sfl_G(L)\neq0\in RO(G),\]
then there is a bifurcation of critical points for $f$. 
\end{theorem}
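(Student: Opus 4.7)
\medskip

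\noindent\textbf{Proof plan for Theorem \ref{thm:main}.}
My plan is to argue by contradiction and combine the $G$-equivariant normal form from Section \ref{sect-cogredient} with a Smoller--Wasserman-type equivariant homotopy argument. Assume there is no bifurcation point of $f$ in $(0,1)$. Since $L_0$ and $L_1$ are invertible, an application of the implicit function theorem excludes bifurcation at the endpoints, so we obtain some $\varepsilon>0$ with the property that $\nabla f_\lambda(u)\neq 0$ for all $\lambda\in I$ and all $u\in H$ with $0<\|u\|\leq\varepsilon$. This uniform no-bifurcation property is what I will eventually contradict.

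Next I would invoke the $G$-equivariant cogredient parametrix (Theorem \ref{thm-cogredpar} in the paper) to reduce the path $L$ to a normal form. Concretely, after a continuous family $M_\lambda\in\GL(H)^G$ of $G$-equivariant cogredient transformations we may assume a $G$-invariant orthogonal decomposition $H=V\oplus W$ such that in block form $M_\lambda^\ast L_\lambda M_\lambda=A_\lambda\oplus B$, where $B\in\mathcal{FS}(W)^G$ is a fixed invertible operator and $A_\lambda$ is a path in $\mathcal{FS}^+(V)^G$. The additivity \eqref{sflAdditivity}, the normalisation \eqref{sflNormalisation} applied to the constant path $B$, and the homotopy invariance of $\sfl_G$ under cogredient transformations then give
\[
\sfl_G(A)=\sfl_G(L)\neq 0\in RO(G),
\]
and by Proposition \ref{prop-sfldiffMorse} this reads $[E^-(A_0)]-[E^-(A_1)]\neq 0$ in $RO(G)$. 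In parallel, the change of coordinates underlying $M_\lambda$ pulls the family $f_\lambda$ back to a family $\widetilde{f}_\lambda$ whose Hessian at $0$ is $A_\lambda\oplus B$, and which has no bifurcation point in $(0,1)$ either, so the $\varepsilon$-no-bifurcation property transfers to $\widetilde{f}_\lambda$.

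The heart of the argument is then to promote the no-bifurcation assumption for $\widetilde{f}_\lambda$ into a $G$-equivariant homotopy equivalence $D(E^-(A_0))/S(E^-(A_0))\simeq_G D(E^-(A_1))/S(E^-(A_1))$. I would build this map using the $G$-equivariant negative gradient flow of $\widetilde{f}_\lambda$ in a small ball of radius $\varepsilon$: the no-bifurcation hypothesis guarantees that $0$ is the only critical point in that ball, and together with the Fredholm/Morse structure coming from $A_\lambda\in\mathcal{FS}^+(V)^G$ this allows, along the one-parameter family, the construction of a $G$-equivariant deformation of the local unstable cell at $(\lambda,0)$. Restricting to the endpoints $\lambda=0,1$ yields the sought $G$-equivariant equivalence of the associated Thom spaces $D(E^-(A_i))/S(E^-(A_i))$. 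Since $G$ is nice in the sense of Smoller and Wasserman, this forces $E^-(A_0)\cong E^-(A_1)$ as $G$-representations, i.e.\ $\sfl_G(L)=\sfl_G(A)=0$, contradicting the hypothesis.

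The main obstacle is precisely the middle step of constructing the equivariant deformation, which is what the paper describes as "joining the proofs of Theorem \ref{thm-Morsenojump} and Theorem \ref{thm-FPR}". In the Smoller--Wasserman setting one starts with a path already in $\mathcal{FS}^+(H)^G$ and the negative eigenspaces sit inside $H$ itself, whereas here they live on the reduced subspace $V$ after a nontrivial cogredient change of variables; one has to make sure that the gradient-flow/Conley-index style construction of the equivariant Thom-type map is compatible with the parametrix and respects the $G$-action uniformly in $\lambda$. The cogredient reduction of Section \ref{sect-cogredient} is tailored exactly to remove the infinite-dimensional non-$\mathcal{FS}^+$ part cleanly, so that once the normal form is achieved the classical nice-group machinery can be run equivariantly, and the proof closes by contradiction.
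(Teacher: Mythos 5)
Your overall strategy — argue by contradiction, normalise the path via the $G$-equivariant cogredient parametrix, and then compare equivariant Thom spaces of negative eigenspaces using the niceness of $G$ — is the right outline, and it matches the paper's in spirit. However, there is a genuine gap in the middle, and it is precisely the part the paper devotes most of Section \ref{sect-thm:main} to.

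The cogredient parametrix does \emph{not} produce the block normal form $M_\lambda^\ast L_\lambda M_\lambda = A_\lambda \oplus B$ with $B$ a fixed invertible operator and $A_\lambda$ a path in $\mathcal{FS}^+(V)^G$. Theorem \ref{thm-cogredpar} yields $M_\lambda^\ast L_\lambda M_\lambda = Q + K_\lambda$, where $Q$ is a $G$-equivariant symmetry with infinite-dimensional $\pm 1$-eigenspaces and $K_\lambda$ is a $\lambda$-dependent path of compact operators; this is still a path of strongly indefinite operators on all of $H$, and $K_\lambda$ need not commute with $Q$, so no $G$-invariant splitting of the form you claim falls out. Because of this, your subsequent step — running the equivariant negative gradient flow to deform "the local unstable cell at $(\lambda,0)$" — does not make sense as stated: for a strongly indefinite Hessian the unstable manifold of $0$ is infinite-dimensional, there is no finite Morse cell, and the equivariant Conley index / Thom space of the negative eigenspace is not available.

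What is missing, and what the paper supplies, is a two-stage finite-dimensional reduction after the parametrix. First, on the linear level one constructs a nested sequence of $G$-invariant finite-dimensional subspaces $H_n$ (which commute with $Q$) such that for large $n$ the complementary block $(I_H-P_n)L_\lambda\mid_{H^\perp_n}$ is invertible for all $\lambda$ and a linear homotopy diagonalises the path in a way that preserves invertibility at the endpoints; together with \eqref{sflNormalisation} and \eqref{sflAdditivity} this localises $\sfl_G(L)$ on $H_n$ and, via Proposition \ref{prop-sfldiffMorse}, yields $[E^-(L^n_0)]\neq[E^-(L^n_1)]$ for the compressed operators $L^n_\lambda=P_nL_\lambda\mid_{H_n}$. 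Second, on the nonlinear level one needs the equivariant Lyapunov--Schmidt reduction of Lemma \ref{lemma-implicit} to pass from the original functional on $H$ to a family $\overline f_\lambda$ on the finite-dimensional invariant ball in $H_n$ with the same bifurcation points, and one must then verify that the Hessians $\overline L^n_\lambda$ of $\overline f_\lambda$ at $0$ are invertible and have the same negative $G$-eigenspace classes as $L^n_\lambda$ at the endpoints — this is the content of the norm estimates on $A^n_\lambda = D_0\eta^n_\lambda$ and the homotopy of spectral projections. Only after both reductions does one land in the finite-dimensional $\mathcal{FS}^+$ setting where the equivariant Conley index of $D(0,\varepsilon)$ equals $[D_\lambda/\partial D_\lambda,[\partial D_\lambda]]$ for $\lambda=0,1$, and the niceness of $G$ gives the desired contradiction. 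Your sentence claiming the cogredient reduction "is tailored exactly to remove the infinite-dimensional non-$\mathcal{FS}^+$ part cleanly" compresses these two substantial reduction steps into the parametrix itself, which is where the proof proposal fails as written.
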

\noindent
The following corollary is an immediate consequence of Proposition \ref{prop-sfldiffMorse} and Theorem \ref{thm:main}. It is the main result of Smoller and Wasserman's work \cite{SmollerWasserman} that we stated in the introduction in Theorem \ref{thm-Morsenojump}.

\begin{cor}\label{cor:main}
If $L_\lambda\in\mathcal{FS}^+(H)^G$, $\lambda\in I$, $L_0$,$L_1$ are invertible and $E^-(L_0)$ and $E^-(L_1)$ are not isomorphic as $G$-representations, then there is a bifurcation of critical points for $f$. 
\end{cor}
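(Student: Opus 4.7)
The plan is to derive this corollary directly from the two ingredients the paper explicitly flags: the Morse-index formula for the equivariant spectral flow (Proposition \ref{prop-sfldiffMorse}) and the main theorem (Theorem \ref{thm:main}). The only nontrivial link between them is verifying that non-isomorphic Morse subspaces really do give a nonzero class in $RO(G)$, i.e.\ that the Grothendieck-group passage from the monoid of isomorphism classes of $G$-representations to $RO(G)$ is injective on monoid elements.

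First I would observe that the hypothesis $L_\lambda\in\mathcal{FS}^+(H)^G$ for every $\lambda\in I$ is exactly what is needed to apply Proposition \ref{prop-sfldiffMorse} to the path $L=\{L_\lambda\}_{\lambda\in I}$. This yields the identity
\[\sfl_G(L)=[E^-(L_0)]-[E^-(L_1)]\in RO(G).\]
Next I would need to convert the hypothesis $E^-(L_0)\not\cong E^-(L_1)$ as $G$-representations into the statement $[E^-(L_0)]-[E^-(L_1)]\neq 0$ in $RO(G)$. This is where I would invoke the cancellation property of $RO(G)$: because $G$ is a compact Lie group, every finite-dimensional real $G$-representation decomposes, uniquely up to permutation of summands, into irreducible $G$-representations. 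Hence the commutative monoid of isomorphism classes of finite-dimensional $G$-representations under direct sum is a free commutative monoid on the set of isomorphism classes of irreducibles, and in particular satisfies the cancellation law $[U\oplus W]=[V\oplus W]\Rightarrow [U]=[V]$. Consequently the natural map from this monoid into its Grothendieck group $RO(G)$ is injective, so $[E^-(L_0)]=[E^-(L_1)]$ in $RO(G)$ would force $E^-(L_0)\cong E^-(L_1)$, contradicting the hypothesis.

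Combining these two facts yields $\sfl_G(L)\neq 0\in RO(G)$. Together with the invertibility of $L_0$ and $L_1$, all hypotheses of Theorem \ref{thm:main} are satisfied, and the desired bifurcation of critical points for $f$ follows at once.

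The main (and really only) obstacle is the cancellation step, and it is mild: it is just the observation that complete reducibility of finite-dimensional representations of a compact Lie group makes the representation monoid free commutative, so that non-isomorphic representations remain distinguishable after Grothendieck completion. Note that the niceness hypothesis on $G$ enters only through Theorem \ref{thm:main}; it is not needed for the passage from non-isomorphism of $E^-(L_0)$ and $E^-(L_1)$ to non-triviality of $\sfl_G(L)$.
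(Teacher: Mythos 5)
Your proof is correct and matches the paper's own reasoning, which simply declares Corollary~\ref{cor:main} to be ``an immediate consequence of Proposition~\ref{prop-sfldiffMorse} and Theorem~\ref{thm:main}.'' The only thing you add is the explicit justification of the cancellation step --- that complete reducibility of finite-dimensional real representations of a compact Lie group makes the representation monoid free commutative, hence cancellative, so $E^-(L_0)\not\cong E^-(L_1)$ really does give $[E^-(L_0)]-[E^-(L_1)]\neq 0$ in $RO(G)$ --- which the paper leaves implicit.
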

\noindent
By \eqref{forgetfull}, we also reobtain the main theorem of Fitzpatrick, Pejsachowicz and Recht's work \cite{SFLPejsachowiczI} that we stated in the introduction in Theorem \ref{thm-FPR}.

\begin{cor}\label{cor:sfl}
If $L_\lambda\in\mathcal{FS}(H)$, $\lambda\in I$, $L_0$,$L_1$ are invertible and $\sfl(L)\neq0\in\mathbb{Z}$, then there is a bifurcation of critical points for $f$. 
\end{cor}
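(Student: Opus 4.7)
The plan is to derive Corollary \ref{cor:sfl} as the special case of Theorem \ref{thm:main} in which the compact Lie group $G$ is taken to be trivial. First I would check that the setup of Theorem \ref{thm:main} applies: if $G = \{e\}$, then the only orthogonal $G$-action on $H$ is the identity, every $C^2$-functional $f_\lambda$ is automatically $G$-invariant, and every bounded selfadjoint operator is $G$-equivariant, so $\mathcal{FS}(H)^G = \mathcal{FS}(H)$ and the path $L = \{L_\lambda\}_{\lambda \in I}$ lies in $\mathcal{FS}(H)^G$. Moreover, the trivial group is nice in the sense of Smoller and Wasserman, since $G/G_0$ is itself trivial, which is explicitly listed among the admissible quotients in the text preceding Theorem \ref{thm:main}.

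Next I would identify $\sfl_G(L)$ with $\sfl(L)$ in this case. For the trivial group, finite dimensional real representations are classified up to isomorphism by their dimension, so the monoid of isomorphism classes of representations is $\mathbb{N}$ and its Grothendieck group is $RO(G) \cong \mathbb{Z}$ via the dimension map. Under this identification the canonical forgetful homomorphism $F: RO(G) \to \mathbb{Z}$ from the introduction becomes the identity, and relation \eqref{forgetfull} reads
\[
\sfl_G(L) = F(\sfl_G(L)) = \sfl(L) \in \mathbb{Z}.
\]
Hence the assumption $\sfl(L) \neq 0 \in \mathbb{Z}$ is equivalent to $\sfl_G(L) \neq 0 \in RO(G)$.

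Combining these observations with the standing invertibility hypotheses on $L_0$ and $L_1$, all the premises of Theorem \ref{thm:main} hold, and the existence of a bifurcation point of critical points of $f$ follows at once. There is no real obstacle to overcome here: the only substantive content of the argument is the verification that the trivial group qualifies as nice and that $RO(\{e\})$ canonically reduces to $\mathbb{Z}$ in such a way that \eqref{forgetfull} becomes tautological. All the genuine work has been discharged in the proof of Theorem \ref{thm:main}.
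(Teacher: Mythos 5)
Your proposal is correct and follows essentially the same route as the paper: the paper's one-line justification ("By \eqref{forgetfull}, we also reobtain...") is precisely the specialisation of Theorem \ref{thm:main} to the trivial group, under which $RO(G)\cong\mathbb{Z}$, $F$ becomes the identity, and $\sfl_G(L)$ is identified with $\sfl(L)$, as already noted in Section \ref{sect-gequivsfl}. You have merely spelled out the details (triviality of the action, niceness of $\{e\}$, and the identification of $RO(\{e\})$ with $\mathbb{Z}$) that the paper leaves implicit.
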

\noindent
Finally, let us emphasize that Theorem \ref{thm-Morsejump} and Krasnoselskii's bifurcation theorem (of which we reminded above Theorem \ref{thm-Morsejump}) are immediate consequences of both Corollary \ref{cor:main} and Corollary \ref{cor:sfl}. Consequently, they ultimately are also covered by our Theorem \ref{thm:main}.


\section{The $G$-equivariant Cogredient Parametrix}\label{sect-cogredient}
We call an operator $Q\in\mathcal{L}(H)$ a symmetry if it is of the form $Q=P-(I_H-P)=2P-I_H$ for an orthogonal projection $P$ which has infinite dimensional kernel and range. Note that any symmetry $Q$ satisfies $Q^2=I_H$ and $Q\in\mathcal{FS}^i(H)$. Moreover, $Q$ is $G$-equivariant if and only if $\im(P)$ and $\ker(P)$ are $G$-invariant subspaces of $H$.\\
The aim of this section is the proof of the following theorem, which is a pillar of the proof of Theorem \ref{thm:main}. Henceforth we denote by $\mathcal{KS}(H)$ the space of all selfadjoint compact operators with the norm topology, and by $\mathcal{KS}(H)^G$ its $G$-equivariant elements. Similarly, $\GL(H)^G$ stands for the $G$-equivariant invertible operators. 

\begin{theorem}\label{thm-cogredpar}
Let $L=\{L_\lambda\}_{\lambda\in I}$ be a path in $\mathcal{FS}^i(H)^G$. Then there is a $G$-equivariant symmetry $Q\in\mathcal{FS}^i(H)^G$ and paths $M=\{M_\lambda\}_{\lambda\in I}$ in $\GL(H)^G$ and $K=\{K_\lambda\}_{\lambda\in H}\in\mathcal{KS}(H)^G$ such that

\[M^\ast_\lambda L_\lambda M_\lambda=Q+K_\lambda,\qquad\lambda\in I.\] 
\end{theorem}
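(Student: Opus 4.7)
The plan is to reduce $L_\lambda$ cogrediently to a fixed symmetry plus a compact perturbation in three stages: a local spectral truncation, a polar decomposition of the invertible piece, and a global patching of the resulting local normal forms.

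Stage one. Around each $\lambda_0\in I$ I pick $a>0$ with $\pm a\notin\sigma(L_{\lambda_0})$; by upper semicontinuity of the spectrum this persists on a neighbourhood, and $P_\lambda:=\chi_{[-a,a]}(L_\lambda)$ (defined by holomorphic functional calculus along a contour enclosing $[-a,a]$) is a continuous path of $G$-equivariant finite-rank orthogonal projections. Covering $I$ yields a partition $0=t_0<\dots<t_N=1$ with gaps $a_j>0$ such that $\pm a_j\notin\sigma(L_\lambda)$ for $\lambda\in[t_{j-1},t_j]$. On each subinterval, the $G$-equivariant Kato transformation function (a continuous path of $G$-unitaries intertwining $P_\lambda$ with a constant projection; note that $G$-equivariance is automatic since the Kato formula involves only $P_\lambda$ and $P_{t_{j-1}}$, both $G$-equivariant) allows me to assume that the $[-a_j,a_j]$-spectral subspace is a fixed $G$-invariant finite-dimensional $V_j$, so $H=V_j\oplus V_j^\perp$ reduces $L_\lambda$ and $L^{(2)}_\lambda:=L_\lambda|_{V_j^\perp}$ is invertible with a uniform spectral gap.

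Stage two. On $V_j^\perp$, the operator $|L^{(2)}_\lambda|^{1/2}$ is $G$-equivariant by functional calculus, and setting $N_\lambda:=|L^{(2)}_\lambda|^{-1/2}\in\GL(V_j^\perp)^G$ produces $N_\lambda^*L^{(2)}_\lambda N_\lambda = S_\lambda:=\operatorname{sign}(L^{(2)}_\lambda)$, a $G$-equivariant symmetry whose $\pm1$-eigenspaces are the positive and negative spectral subspaces of $L^{(2)}_\lambda$; both are infinite-dimensional $G$-representations because the hypothesis $L_\lambda\in\mathcal{FS}^i(H)^G$ forces essential spectrum on both sides of $0$. A second Kato transformation (now applied to the spectral projections of $S_\lambda$) trivializes $S_\lambda$ to a fixed symmetry $S^{(j)}$ on $V_j^\perp$. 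Choosing any $G$-equivariant symmetry $Q^{V_j}$ on the finite-dimensional $V_j$ and extending by the identity, I obtain a continuous $\tilde M^{(j)}_\lambda\in\GL(H)^G$ with $(\tilde M^{(j)}_\lambda)^* L_\lambda \tilde M^{(j)}_\lambda = Q^{(j)}+K^{(j)}_\lambda$ on $[t_{j-1},t_j]$, where $Q^{(j)}:=Q^{V_j}\oplus S^{(j)}$ is a fixed $G$-equivariant symmetry and $K^{(j)}_\lambda$ is of finite rank (absorbing the difference between $L_\lambda|_{V_j}$ and $Q^{V_j}$).

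Stage three. To glue these local normal forms into a single global fixed $Q$, I observe that at each junction $t_j$ the symmetries $Q^{(j)}$ and $Q^{(j+1)}$ describe two cogredient reductions of the same $L_{t_j}$ and therefore have $\pm1$-eigenspaces that agree as $G$-representations modulo a finite-rank correction (coming from the finite-rank difference of the projections $\chi_{[-a_j,a_j]}(L_{t_j})$ and $\chi_{[-a_{j+1},a_{j+1}]}(L_{t_j})$). Using a $G$-equivariant Kuiper-type theorem — the path-connectedness inside $\GL(H)^G$ of two $G$-equivariant symmetries whose $\pm 1$-eigenspaces are $G$-isomorphic — I produce constant $G$-cogredient automorphisms $R^{(j)}\in\GL(H)^G$ with $(R^{(j)})^* Q^{(j)} R^{(j)} = Q$ for a fixed $Q$, absorbing the finite-rank mismatches into the compact term $K_\lambda$. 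Replacing $\tilde M^{(j)}_\lambda$ by $\tilde M^{(j)}_\lambda R^{(j)}$ and interpolating across each $t_j$ by a small continuous arc in $\GL(H)^G$ yields the desired global continuous $M_\lambda\in\GL(H)^G$ and $K_\lambda\in\mathcal{KS}(H)^G$ satisfying $M_\lambda^* L_\lambda M_\lambda = Q+K_\lambda$. The principal obstacle is exactly this alignment step: one must verify that the $G$-isotype structure of the relevant spectral subspaces is preserved across subinterval boundaries, and then invoke a $G$-equivariant Kuiper-type contractibility result to produce the connecting paths — this is the analytical heart of the theorem and the place where the non-equivariant proof must be modified in an essential way.
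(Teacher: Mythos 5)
Your Stages 1 and 2 are sound and parallel the non-equivariant constructions: the Kato transformation functions, $|L^{(2)}_\lambda|^{-1/2}$, etc., are all built by holomorphic/continuous functional calculus out of $G$-equivariant inputs and so stay inside $\GL(H)^G$. The route is genuinely different from the paper, which never does a piecewise reduction at all: there, one shows that $\pi_Q(M,K)=MQM^\ast+K$ admits a local section near every $S\in\mathcal{FS}^i(H)^G$ (not just near symmetries), deduces that $B_Q=\im(\pi_Q)$ is open and closed, hence a union of connected components, concludes that $\pi_Q:\GL(H)^G\times\mathcal{KS}(H)^G\to B_Q$ is a locally trivial fibre bundle, and then pulls it back over the contractible interval to obtain a global section. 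Your approach is more hands-on, but the gluing is exactly where it goes wrong.

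The gap is in Stage 3. You want constant $R^{(j)}\in\GL(H)^G$ with $(R^{(j)})^\ast Q^{(j)}R^{(j)}=Q$ \emph{exactly}, invoking a $G$-equivariant Kuiper-type statement. But for two $G$-equivariant symmetries a cogredient relation $Q'=R^\ast QR$ with $R\in\GL(H)^G$ forces the $\pm1$-eigenspaces to be $G$-isomorphic (project a maximal positive subspace onto $H^+(Q)$: the restriction of $\chi_{(0,\infty)}(Q)\circ R$ to $H^+(Q')$ is a $G$-equivariant isomorphism). This condition is simply not guaranteed by your construction, because the infinite parts $(S^{(j)})^\pm$ and $(S^{(j+1)})^\pm$ are positive/negative spectral subspaces of $L_{t_j}$ for different truncation radii and may differ by a finite-dimensional $G$-representation that need not be balanced by the arbitrary choices of $Q^{V_j}$ you allow; that such finite-dimensional discrepancies are a genuine obstruction is exactly the phenomenon in the paper's closing $\mathbb{Z}_2$ example, where two symmetries lying in the same set $B_Q$ have non-isomorphic $\pm1$-eigenspaces. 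So the hypothesis of the Kuiper-type theorem you invoke can simply fail, and ``absorbing the finite-rank mismatch into $K_\lambda$'' cannot rescue an equation $(R^{(j)})^\ast Q^{(j)}R^{(j)}=Q$ that has no solution.

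What your construction \emph{does} give for free, and what suffices, is the compact-difference statement. At each junction set $R^{(j)}:=(\tilde M^{(j)}_{t_j})^{-1}\tilde M^{(j+1)}_{t_j}\in\GL(H)^G$; from
$(\tilde M^{(j+1)}_{t_j})^\ast L_{t_j}\tilde M^{(j+1)}_{t_j}=(R^{(j)})^\ast(Q^{(j)}+K^{(j)}_{t_j})R^{(j)}$
you get $Q^{(j+1)}-(R^{(j)})^\ast Q^{(j)}R^{(j)}$ compact. Set $\tilde R^{(1)}=I$ and $\tilde R^{(j+1)}=\tilde R^{(j)}R^{(j)}$ and define $M_\lambda:=\tilde M^{(j)}_\lambda(\tilde R^{(j)})^{-1}$ on $[t_{j-1},t_j]$. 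Then by induction $Q^{(j)}-(\tilde R^{(j)})^\ast Q^{(1)}\tilde R^{(j)}$ is compact, so $M_\lambda^\ast L_\lambda M_\lambda-Q^{(1)}\in\mathcal{KS}(H)^G$ on each subinterval; and the pieces agree at the junctions because $M^{(j+1)}_{t_j}=\tilde M^{(j+1)}_{t_j}(R^{(j)})^{-1}(\tilde R^{(j)})^{-1}=\tilde M^{(j)}_{t_j}(\tilde R^{(j)})^{-1}=M^{(j)}_{t_j}$, so $M_\lambda$ and $K_\lambda=M_\lambda^\ast L_\lambda M_\lambda-Q^{(1)}$ are already continuous and no interpolation (and no Kuiper-type contractibility result) is needed. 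With this correction your Stage 3 closes, giving an alternative, more elementary proof than the paper's fibre-bundle argument; as written, however, the step $(R^{(j)})^\ast Q^{(j)}R^{(j)}=Q$ and the appeal to Kuiper are incorrect.
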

\noindent
The remainder of this section is devoted to the proof of this theorem. Let us first sketch the idea. We consider for a fixed $G$-equivariant symmetry $Q\in\mathcal{FS}^i(H)^G$ the map

\begin{align}\label{defQ}
\pi_Q:\GL(H)^G\times\mathcal{KS}(H)^G\rightarrow\mathcal{FS}^i(H)^G,\quad \pi_Q(M,K)=MQM^\ast+K.
\end{align}
Note that $\im(\pi_Q)$ is indeed in $\mathcal{FS}^i(H)^G$ as $G$ acts orthogonally and thus the adjoint $M^\ast$ is $G$-equivariant as well. Clearly Theorem \ref{thm-cogredpar} is shown if we can prove that for some $Q$, the path $L$ can be lifted to $\GL(H)^G\times\mathcal{KS}(H)^G$, i.e., if there is a continuous map $\tilde{L}:I\rightarrow \GL(H)^G\times\mathcal{KS}(H)^G$ such that $L_\lambda=\pi_Q\circ\tilde{L}$ for all $\lambda\in I$. In the non-equivariant case, this was done in \cite{SFLPejsachowiczI} by showing that \eqref{defQ} is the projection of a fibre bundle. Then the desired lifting of $L$ can be obtained from a global section of the pullback bundle of \eqref{defQ} by $L$, which exists as the latter bundle has a contractible base space.\\
Unfortunately, in our more general setting, $\pi_Q$ is not necessarily surjective, which affects the argument of \cite{SFLPejsachowiczI}. Indeed, we will see below that in general $\im(\pi_Q)$ is a union of connected components of $\mathcal{FS}^i(H)^G$ if $G$ is non-trivial. Moreover, at the end of this section we provide an example where these components are not all of $\mathcal{FS}^i(H)^G$.\\
Before we begin the proof of Theorem \ref{thm-cogredpar}, we note the following simple lemma about functional calculus that will be used throughout the rest of the paper. 

\begin{lemma}\label{FunctCalc}
Let $T\in\mathcal{L}(H)$ be selfadjoint and $f:\sigma(T)\rightarrow\mathbb{R}$ a continuous function on the spectrum of $T$. If $T$ is $G$-equivariant, then so is $f(T)$. 
\end{lemma}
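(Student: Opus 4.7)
The plan is to reduce the assertion to the polynomial case via Stone--Weierstrass and then pass to the norm limit. Let $U:G\to O(H)$ denote the orthogonal action, so that $G$-equivariance of an operator $S\in\mathcal{L}(H)$ means $U_g S=SU_g$ for every $g\in G$.

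First I would observe that for any polynomial $p(t)=\sum_{k=0}^{n}a_k t^k$ the operator $p(T)=\sum_{k=0}^{n}a_k T^k$ is $G$-equivariant. Indeed, the relation $U_g T=TU_g$ implies $U_g T^k = T^k U_g$ by a trivial induction on $k$, and the property $U_g S=SU_g$ is preserved under real linear combinations. Hence $p(T)$ commutes with every $U_g$.

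Next, since $T$ is selfadjoint and bounded, $\sigma(T)\subset\mathbb{R}$ is compact. By the Stone--Weierstrass theorem there exists a sequence of polynomials $p_n$ with real coefficients such that $p_n\to f$ uniformly on $\sigma(T)$. The continuous functional calculus is an isometric $^\ast$-homomorphism from $C(\sigma(T))$ into $\mathcal{L}(H)$, so $\|p_n(T)-f(T)\|\to 0$ as $n\to\infty$.

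Finally I would pass to the limit. For each fixed $g\in G$, the identity $U_g\, p_n(T)=p_n(T)\,U_g$ holds by the first step. Since left and right multiplication by the bounded operator $U_g$ are continuous in the norm topology, letting $n\to\infty$ yields $U_g\,f(T)=f(T)\,U_g$ for every $g\in G$, which is precisely the $G$-equivariance of $f(T)$. There is no real obstacle here; the only point to notice is that equivariance is a closed condition in the operator norm topology, which is what allows the approximation argument to go through.
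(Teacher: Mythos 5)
Your proof is correct and follows essentially the same route as the paper: establish equivariance for polynomials, approximate $f$ uniformly on $\sigma(T)$ by polynomials via Stone--Weierstrass, and pass to the norm limit using the isometry of the continuous functional calculus. The paper phrases the final step as a direct $\varepsilon$-estimate on $\|f(T)g-gf(T)\|$ rather than invoking closedness of the commutant, but this is only a cosmetic difference.
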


\begin{proof}
Note that $p(T)$ is $G$-equivariant for every polynomial, and for every $\varepsilon>0$ there is a polynomial such that $\|f-p\|_\infty<\varepsilon$ on $\sigma(T)$. Hence it follows for $g\in G$ that

\begin{align*}
\|f(T)g-gf(T)\|\leq\|f(T)g-p(T)g\|+\|gp(T)-gf(T)\|\leq 2\|p(T)-f(T)\|\leq 2\|p-f\|_\infty\leq 2\varepsilon,
\end{align*}
which implies that $f(T)g=gf(T)$.
\end{proof}
\noindent
The following quite technical proposition shows the existence of a local section of \eqref{defQ}. Let us point out that even in the case of a trivial group action, the result is more general than the corresponding Lemma 2.2 in \cite{SFLPejsachowiczI}. The latter only constructs a section in a neighbourhood of any symmetry in $\mathcal{FS}^i(H)^G$ which is not enough for our purposes due to the already mentioned lack of surjectivity of $\pi_Q$. 

\begin{prop}\label{prop-CogrPar}
For any $S\in\mathcal{FS}^i(H)^G$ there is a $G$-equivariant symmetry $Q_S$, an open neighbourhood $\mathcal{U}_S$ of $S$ in $\mathcal{FS}^i(H)^G$ and a map $\sigma_S:\mathcal{U}_S\rightarrow\GL(H)^G\times\mathcal{KS}(H)^G$ such that

\[(\pi_{Q_S}\circ\sigma_S)(T)=T\quad\text{for all}\,\,\,\, T\in\mathcal{U}_S.\]
\end{prop}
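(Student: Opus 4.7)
My plan is to construct a specific $Q_S$, $M_0\in\GL(H)^G$ and $K_0\in\mathcal{KS}(H)^G$ with $\pi_{Q_S}(M_0,K_0)=S$, then verify that the Fr\'echet derivative $D\pi_{Q_S}(M_0,K_0)$ admits a bounded right inverse, and finally deduce the local section from the Banach-space implicit function theorem. Unlike the non-equivariant argument in \cite{SFLPejsachowiczI}, I would not attempt to prove that $\pi_{Q_S}$ is globally a submersion but only at the chosen preimage of $S$; this is enough for a local statement and sidesteps the fact, flagged above the proposition, that $\pi_{Q_S}$ need not be surjective.

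\textbf{Equivariant factorisation of $S$.} Since $S\in\mathcal{FS}^i(H)^G$, the space $\ker(S)$ is finite dimensional and $G$-invariant, so the orthogonal projection $P:=P_{\ker(S)}$ onto it is a compact selfadjoint $G$-equivariant operator. Set $\tilde S:=S+P$. Then $\tilde S$ is invertible, selfadjoint, $G$-equivariant, and still in $\mathcal{FS}^i(H)^G$ (only the essential spectrum is relevant for the latter). Define $Q_S:=\sgn(\tilde S)$ and $M_0:=|\tilde S|^{1/2}$ via functional calculus; by Lemma \ref{FunctCalc} both are $G$-equivariant. Because $\tilde S\in\mathcal{FS}^i$, the $\pm 1$-eigenspaces of $Q_S$ are infinite dimensional and $Q_S$ is a $G$-equivariant symmetry. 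Also $M_0=M_0^\ast\in\GL(H)^G$, and since $Q_S$ commutes with $|\tilde S|^{1/2}$ we have $\tilde S=M_0 Q_S M_0^\ast$, whence $S=M_0 Q_S M_0^\ast+K_0$ with $K_0:=-P\in\mathcal{KS}(H)^G$.

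\textbf{Surjective derivative with right inverse.} Differentiating $\pi_{Q_S}$ at $(M_0,K_0)$ gives
\[
D\pi_{Q_S}(M_0,K_0)[\dot M,\dot K]=\dot M\,Q_S\,M_0^\ast+M_0\,Q_S\,\dot M^\ast+\dot K,
\]
a continuous linear map into the tangent space to the open subset $\mathcal{FS}^i(H)^G$ of the Banach space of bounded $G$-equivariant selfadjoint operators. For any $\dot S$ in this tangent space, put
\[
R(\dot S):=\bigl(\tfrac12\,\dot S\,(M_0^\ast)^{-1}Q_S,\;0\bigr).
\]
Using $Q_S^2=I_H$, $M_0^\ast=M_0$ and $\dot S^\ast=\dot S$ one checks directly that $D\pi_{Q_S}(M_0,K_0)\,R(\dot S)=\dot S$. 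Since $(M_0^\ast)^{-1}$ and $Q_S$ are bounded and $G$-equivariant, $R$ takes values in $\mathcal{L}(H)^G\times\mathcal{KS}(H)^G$ and is continuous, so it is a bounded right inverse of the derivative; in particular the kernel splits.

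\textbf{Local section.} The map $\pi_{Q_S}$ is smooth (indeed polynomial) between the Banach manifolds $\GL(H)^G\times\mathcal{KS}(H)^G$ and $\mathcal{FS}^i(H)^G$, and its derivative at $(M_0,K_0)$ admits the bounded right inverse $R$. The submersion form of the Banach implicit function theorem then supplies an open neighbourhood $\mathcal{U}_S\subset\mathcal{FS}^i(H)^G$ of $S$ and a smooth map $\sigma_S:\mathcal{U}_S\to\GL(H)^G\times\mathcal{KS}(H)^G$ with $\sigma_S(S)=(M_0,K_0)$ and $\pi_{Q_S}\circ\sigma_S=\mathrm{id}_{\mathcal{U}_S}$, which is the desired section. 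The main obstacle in this argument is concentrated in the first step: one must arrange the invertibilising perturbation, the symmetry $Q_S$, and the cogredience factor $M_0$ all to be $G$-equivariant, which works here because $\ker(S)$ is $G$-invariant and Lemma \ref{FunctCalc} keeps the necessary functional calculus inside the $G$-equivariant category.
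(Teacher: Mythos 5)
Your construction of $Q_S$, $M_0$, $K_0$ is essentially the same as the paper's: both set $\tilde S=S+P_{\ker S}$ and take $Q_S=\sgn(\tilde S)$, $M_0=|\tilde S|^{1/2}$, $K_0=-P_{\ker S}$ (in the paper's notation this is $V=S+K$, $Q_S=2P_+(V)-I_H$, and the transport element $h=(|V|^{1/2},0)$). Where you diverge is in how the section is produced. The paper constructs the section \emph{explicitly}: first near the symmetry $Q_S$ by building $G$-equivariant square roots $S_\pm(T)$ of the positive-definite forms $B(T)$ and setting $\sigma(T)=((S_0(T)^{-1})^\ast,0)$, and then transporting that section from $Q_S$ to $V$ to $S$ using the action of the group $\GL(H)^G\times\mathcal{KS}(H)^G$. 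You instead compute the derivative of $\pi_{Q_S}$ at $(M_0,K_0)$ directly over $S$, exhibit the explicit bounded $G$-equivariant right inverse $R(\dot S)=(\tfrac12\dot S M_0^{-1}Q_S,0)$ (whose image is closed since $TR=I$ forces $R$ to be bounded below, so the kernel splits), and invoke the Banach submersion theorem. Your route is shorter and conceptually cleaner since it produces the section directly at $S$ rather than first at $Q_S$ and then transporting, and it sidesteps the positive-definite form machinery entirely; the trade-off is that it is less constructive, whereas the paper's section is given by a closed formula that it later reuses when proving the fibre-bundle structure of $\pi_Q$. The computation checks out: with $M_0^\ast=M_0$, $Q_S^\ast=Q_S$, $Q_S^2=I_H$ and $\dot S^\ast=\dot S$ one gets $\dot M Q_S M_0^\ast=M_0Q_S\dot M^\ast=\tfrac12\dot S$, so the right inverse is correct, and equivariance of $\dot S$, $M_0^{-1}$, $Q_S$ keeps $R$ inside the $G$-equivariant category.
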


\begin{proof}
Let $K$ be the orthogonal projection onto the kernel of $S$. Then $K\in\mathcal{KS}(H)^G$ as $\ker(S)$ is $G$-invariant and of finite dimension, and moreover 

\begin{align}\label{V}
V:=S+K\in GL(H)^G.
\end{align}
Henceforth we denote by $P_+(V)=\chi_{(0,\infty)}(V)$ and $P_-(V)=\chi_{(-\infty,0)}(V)$ the projections on the positive and negative spectral subspaces of $V$. Note that these operators are $G$-equivariant by Lemma \ref{FunctCalc}. We set

\begin{align}\label{QS}
Q_S=2P_+(V)-I_H\in\mathcal{FS}^i(H)^G
\end{align}
and choose a neighbourhood $\tilde{\mathcal{U}}\subset\mathcal{FS}^i(H)^G$ of $Q_S$ that consists of invertible operators. As above, we let $P_+(T)$ and $P_-(T)$ denote the orthogonal projections onto the positive and negative spectral subspaces for $T\in\tilde{\mathcal{U}}$ which are again $G$-equivariant. As $P_\pm(T)$ continuously depend on $T\in\tilde{\mathcal{U}}$ and as $\GL(H)^G$ is open, there is a neighbourhood $\mathcal{U}\subset\tilde{\mathcal{U}}$ of $Q_S$ such that

\[P_+(T) P_+(Q_S)+P_-(T) P_-(Q_S)\in GL(H)^G,\quad T\in \mathcal{U},\]  
where we use that this operator is the identity for $T=Q_S$. Consequently, for each $T\in\mathcal{U}$, the restriction of $P_\pm(T)$ to $\im(P_\pm(Q_S))$ is a bijection onto $\im(P_\pm(T))$. Henceforth, we set $H_\pm:=\im(P_\pm(Q_S))$ to simplify notation.\\
We now consider for $T\in\mathcal{U}$ the bilinear form on $H_+$ defined by

\[B(T)(u,v)=\langle TP_+(T)u,P_+(T)v\rangle,\quad u,v\in H_+.\]  
Then clearly $B(T)$ is bounded, symmetric and positive definite. Moreover, $B(T)$ is $G$-invariant as $T$ and $P_+(T)$ are $G$-equivariant and $G$ acts orthogonally. Let $\widetilde{T}$ be the Riesz-representation of $B(T)$, i.e., the unique selfadjoint operator such that

\[B(T)(u,v)=\langle\widetilde{T}u,v\rangle,\quad u,v\in H_+.\]
As $\widetilde{T}$ is unique and $B(T)$ is $G$-invariant, it is readily seen that $\widetilde{T}$ is $G$-equivariant. Thus, again by Lemma \ref{FunctCalc}, the inverse square-root $S_+(T):=\widetilde{T}^{-\frac{1}{2}}$ is $G$-equivariant as well. Moreover,

\begin{align*}
\begin{split}
\langle u,v\rangle&=\langle\widetilde{T}^{-1}\widetilde{T}u,v\rangle=\langle \widetilde{T}^{-\frac{1}{2}}\widetilde{T}u,\widetilde{T}^{-\frac{1}{2}}v\rangle=\langle \widetilde{T}\widetilde{T}^{-\frac{1}{2}}u,\widetilde{T}^{-\frac{1}{2}}v\rangle=B(T)(\widetilde{T}^{-\frac{1}{2}}u,\widetilde{T}^{-\frac{1}{2}}v)\\
&=\langle TP_+(T)\widetilde{T}^{-\frac{1}{2}}u,P_+(T)\widetilde{T}^{-\frac{1}{2}}v\rangle=\langle \widetilde{T}^{-\frac{1}{2}}P_+(T)TP_+(T)\widetilde{T}^{-\frac{1}{2}}u,v\rangle\\
&=\langle S_+(T)P_+(T)TP_+(T)S_+(T)u,v\rangle,\quad u,v\in H_+,
\end{split}
\end{align*}
which implies that 

\begin{align}\label{equ-S+}
S_+(T)P_+(T)TP_+(T)S_+(T)=I_{H_+}.
\end{align}
In the same way, we can construct a family $S_-:\mathcal{U}\rightarrow\GL(H_-)^G$ such that for all $T\in\mathcal{U}$

\begin{align*}
-\langle u,v\rangle=\langle S_-(T)P_-(T) TP_-(T)S_-(T)u,v\rangle,\quad u,v\in H_-,
\end{align*}
and thus

\begin{align}\label{equ-S-}
S_-(T)P_-(T) TP_-(T)S_-(T)=-I_{H_-}.
\end{align}
If we now define $S_0:\mathcal{U}\rightarrow\GL(H)^G$ by

\[S_0(T)=P_+(T)S_+(T)P_+(Q_S)-P_-(T)S_-(T)P_-(Q_S),\]
then it follows from \eqref{equ-S+} and \eqref{equ-S-} that


\begin{align*}
S_0(T)^\ast TS_0(T)=P_+(Q_S)-P_-(Q_S)=Q_S,\quad T\in\mathcal{U}.
\end{align*}
Consequently, the map 

\[\sigma:\mathcal{U}\rightarrow\GL(H)^G\times\mathcal{KS}(H)^G,\quad \sigma(T)=((S_0(T)^{-1})^\ast,0),\]
satisfies

\begin{align}\label{equ-picircsigma}
(\pi_{Q_S}\circ\sigma)(T)=\pi_{Q_S}((S_0(T)^{-1})^\ast,0)=(S_0(T)^{-1})^\ast Q_S S_0(T)^{-1}=T,\quad T\in\mathcal{U}.
\end{align}
Let us recall that $\mathcal{U}$ is a neighbourhood of the symmetry $Q_S$ that was defined in \eqref{QS}. Our next aim is to get a section $\sigma_V$ in a neighbourhood $\mathcal{U}_V$ of $V$ in \eqref{V}. In \cite{SFLPejsachowiczI} it was noted that $\mathcal{G}:=\GL(H)\times\mathcal{KS}(H)$ is a topological group with respect to 

\begin{align}\label{groupmult}
(M,K)\cdot (\tilde{M},\tilde{K})=(M\tilde{M}, K+M\tilde{K}M^\ast),
\end{align}
and there is an action $\tau$ of $\mathcal{G}$ on $\mathcal{FS}^i(H)$ defined by 

\[\tau_{h}(L)= MLM^\ast+K,\quad L\in\mathcal{FS}^i(H),\quad h=(M,K)\in \mathcal{G}.\]
Now $\GL(H)^G\times\mathcal{KS}(H)^G$ is a closed subgroup of $\mathcal{G}$ and the action $\tau$ restricts to an action of it on $\mathcal{FS}^i(H)^G$. We set $h:=(|V|^\frac{1}{2},0)\in \GL(H)^G\times\mathcal{KS}(H)^G$ and see by functional calculus that

\[\tau_h(Q_S)=|V|^\frac{1}{2}Q_S|V|^\frac{1}{2}=V,\] 
where \eqref{QS} is used as well as the obvious equality $\sqrt{|x|}(2\chi_{(0,\infty)}(x)-1)\sqrt{|x|}=x$, $x\in\mathbb{R}$. Thus $\mathcal{U}_V:=\tau_h(\mathcal{U})$ is an open neighbourhood of $V$ in $\mathcal{FS}^i(H)^G$. We set 

\[\sigma_V:\mathcal{U}_V\rightarrow \GL(H)^G\times\mathcal{KS}(H)^G,\quad \sigma_V(T)=h\cdot \sigma(\tau_{h^{-1}}(T)),\quad T\in\mathcal{U}_V,\]
where $h=(|V|^\frac{1}{2},0)\in \GL(H)^G\times\mathcal{KS}(H)^G$ as above and the group multiplication \eqref{groupmult} is used. To show that $\pi_{Q_S}\circ\sigma_V=id\mid_{\mathcal{U}_V}$, first note that for any $h_1,h_2\in \GL(H)^G\times\mathcal{KS}(H)^G$,

\[\pi_{Q_S}(h_1\cdot h_2)=\tau_{h_1}(\pi_{Q_S}(h_2)),\]
which directly follows from the definition of $\tau$ and \eqref{groupmult}. Thus by \eqref{equ-picircsigma}

\begin{align}\label{QSsigmaV}
\begin{split}
(\pi_{Q_S}\circ\sigma_V)(T)&=\pi_{Q_S}(h\cdot \sigma(\tau_{h^{-1}}(T)))=\tau_h(\pi_{Q_S}(\sigma(\tau_{h^{-1}}(T))))\\
&=\tau_h(\tau_{h^{-1}}(T))=T,\quad T\in\mathcal{U}_V,
\end{split}
\end{align}
as claimed. Finally, we set $\tilde{h}=(I_H,-K)\in \GL(H)^G\times\mathcal{KS}(H)^G$ and define $\mathcal{U}_S=\tau_{\tilde{h}}(\mathcal{U}_V)$ as well as

\[\sigma_S:\mathcal{U}_S\rightarrow \GL(H)^G\times\mathcal{KS}(H)^G,\quad \sigma_S(T)=\tilde{h}\cdot \sigma_V(\tau_{\tilde{h}^{-1}}(T)),\quad T\in\mathcal{U}_S.\]
Then $\mathcal{U}_S$ is an open neighbourhood of $S$ and the same computation as in \eqref{QSsigmaV} shows that indeed $\pi_{Q_S}\circ\sigma_S=id\mid_{\mathcal{U}_S}$, which ends our proof. 
\end{proof}
\noindent
The following lemma and its corollary concern the image of the map $\pi_Q$.

\begin{lemma}\label{lem-CogrPar}
Let $Q_1, Q_2$ be symmetries and $\pi_{Q_1}$, $\pi_{Q_2}$ the associated maps in \eqref{defQ}. Let $A,B$ be subsets of $\mathcal{FS}^i(H)^G$ such that $A\subset\im(\pi_{Q_1})$ and $B\subset\im(\pi_{Q_2})$. If $A\cap B\neq\emptyset$, then $A\cup B$ is contained in the image of $\pi_{Q_1}$ (and likewise of $\pi_{Q_2}$). 
\end{lemma}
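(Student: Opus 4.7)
The plan is to recognize $\im(\pi_Q)$ as the orbit of the symmetry $Q$ under a natural group action, after which the statement reduces to the elementary fact that two orbits either coincide or are disjoint.

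Recall from the proof of Proposition \ref{prop-CogrPar} that $\mathcal{G} := \GL(H)\times \mathcal{KS}(H)$ is a topological group under the multiplication \eqref{groupmult} and acts on $\mathcal{FS}^i(H)$ via $\tau_{(M,K)}(L) = MLM^\ast + K$. Since $G$ acts orthogonally, $M^\ast$ is $G$-equivariant whenever $M$ is, and conjugation by such an $M$ preserves the compact selfadjoint $G$-equivariant operators. I would first verify that $\mathcal{G}^G := \GL(H)^G \times \mathcal{KS}(H)^G$ is therefore a subgroup of $\mathcal{G}$ on which $\tau$ restricts to an action on $\mathcal{FS}^i(H)^G$. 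Directly from \eqref{defQ} one has $\pi_Q(M,K) = \tau_{(M,K)}(Q)$, so $\im(\pi_Q)$ is exactly the $\mathcal{G}^G$-orbit of $Q$.

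Next, I would pick any $T_0 \in A \cap B$ and use the hypotheses to write
\[
T_0 = \tau_{h_1}(Q_1) = \tau_{h_2}(Q_2), \qquad h_1,h_2 \in \mathcal{G}^G.
\]
Applying $\tau_{h_2^{-1}}$ gives $Q_2 = \tau_{h_2^{-1} \cdot h_1}(Q_1)$, so $Q_2$ lies in the orbit of $Q_1$; consequently the orbits of $Q_1$ and $Q_2$ coincide, i.e.\ $\im(\pi_{Q_1}) = \im(\pi_{Q_2})$. Together with the hypotheses $A \subset \im(\pi_{Q_1})$ and $B \subset \im(\pi_{Q_2})$ this yields $A \cup B \subset \im(\pi_{Q_1})$, and by symmetry $A\cup B \subset \im(\pi_{Q_2})$ as well.

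I do not expect any real obstacle, only routine bookkeeping. The two points worth checking explicitly are that the inverse in $\mathcal{G}$ of an element of $\mathcal{G}^G$ still lies in $\mathcal{G}^G$, and that $\tau$ preserves $\mathcal{FS}^i(H)^G$; both reduce to the observation that the adjoint of a $G$-equivariant operator is again $G$-equivariant when $G$ acts orthogonally.
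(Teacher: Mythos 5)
Your proof is correct and is essentially the same argument as the paper's, only phrased more abstractly: the paper explicitly computes the group element $\tilde h=(M_2^{-1}M_1,\,M_2^{-1}(K_1-K_2)(M_2^{-1})^\ast)$ and checks $\pi_{Q_2}(h)=\pi_{Q_1}(h\cdot\tilde h)$ by hand, and that $\tilde h$ is exactly your $h_2^{-1}\cdot h_1$. Packaging it as ``$\im(\pi_Q)$ is the $\mathcal{G}^G$-orbit of $Q$, and orbits are either equal or disjoint'' is a clean reformulation, but it relies on the same group-action structure the paper already introduced in the proof of Proposition~\ref{prop-CogrPar}, so the content is identical.
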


\begin{proof}
If $S\in A\cap B$, then there are $(M_1,K_1), (M_2,K_2)\in\GL(H)^G\times\mathcal{KS}(H)^G$ such that

\[S=\pi_{Q_1}(M_1,K_1)=\pi_{Q_2}(M_2,K_2).\]
Thus

\[S=M_1Q_1M^\ast_1+K_1=M_2Q_2M^\ast_2+K_2,\]
which implies 

\[Q_2=M^{-1}_2M_1Q_1M^\ast_1(M^{-1}_2)^\ast+M^{-1}_2(K_1-K_2)(M^{-1}_2)^\ast.\]
If we now set $\tilde{h}=(M^{-1}_2M_1,M^{-1}_2(K_1-K_2)(M^{-1}_2)^\ast)\in \GL(H)^G\times\mathcal{KS}(H)^G$, then a direct computation yields

\[\pi_{Q_2}(h)=\pi_{Q_1}(h\cdot \tilde{h}),\quad h\in \GL(H)^G\times\mathcal{KS}(H)^G.\]
 Thus $B$ is contained in the image of $\pi_{Q_1}$.
\end{proof}
\noindent 

\begin{cor}\label{cor-CogrPar}
For every symmetry $Q$, the image of $\pi_Q$ is a union of connected components of $\mathcal{FS}^i(H)^G$. 
\end{cor}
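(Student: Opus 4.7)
The plan is to prove that $\im(\pi_Q)$ is clopen in $\mathcal{FS}^i(H)^G$, from which the conclusion will follow at once, since any clopen subset of a topological space is necessarily a union of connected components.

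For openness, I would combine Proposition \ref{prop-CogrPar} with Lemma \ref{lem-CogrPar}. Given $S \in \im(\pi_Q)$, the proposition furnishes a symmetry $Q_S$, an open neighbourhood $\mathcal{U}_S$ of $S$ in $\mathcal{FS}^i(H)^G$ and a section $\sigma_S$ with $\pi_{Q_S} \circ \sigma_S = \mathrm{id}_{\mathcal{U}_S}$, so $\mathcal{U}_S \subset \im(\pi_{Q_S})$. I then apply Lemma \ref{lem-CogrPar} to $A = \{S\} \subset \im(\pi_Q)$ and $B = \mathcal{U}_S \subset \im(\pi_{Q_S})$: their intersection contains $S$, so the lemma yields $\mathcal{U}_S = A \cup B \subset \im(\pi_Q)$, showing that $S$ is an interior point.

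For closedness I would show that the complement is open by the same mechanism. If $S \notin \im(\pi_Q)$, Proposition \ref{prop-CogrPar} again produces a neighbourhood $\mathcal{U}_S \subset \im(\pi_{Q_S})$ of $S$. If $\mathcal{U}_S$ were to meet $\im(\pi_Q)$ at some point $T$, then Lemma \ref{lem-CogrPar} applied with $A = \im(\pi_Q)$ and $B = \mathcal{U}_S$ would force $\mathcal{U}_S \subset \im(\pi_Q)$, contradicting $S \in \mathcal{U}_S \setminus \im(\pi_Q)$. Hence $\mathcal{U}_S$ lies entirely in the complement, which is therefore open.

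The substantial content of the corollary is already packaged in Proposition \ref{prop-CogrPar} and Lemma \ref{lem-CogrPar}; what remains is only a short topological argument, so there is no serious obstacle here. The only subtlety worth flagging is that the local section guaranteed by the proposition comes with its own symmetry $Q_S$, generally distinct from $Q$, and it is precisely Lemma \ref{lem-CogrPar} that allows one to transfer the information back from $\im(\pi_{Q_S})$ to $\im(\pi_Q)$.
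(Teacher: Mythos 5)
Your argument is correct and follows essentially the same route as the paper: show $\im(\pi_Q)$ is clopen by combining Proposition~\ref{prop-CogrPar} (local sections with their own symmetries $Q_S$) with Lemma~\ref{lem-CogrPar} (transfer between $\im(\pi_{Q_S})$ and $\im(\pi_Q)$). The only cosmetic difference is that the paper establishes closedness via a convergent-sequence argument, while you show the complement is open directly; in a metric space these are of course the same.
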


\begin{proof}
We show that $\im(\pi_Q)$ is open and closed in $\mathcal{FS}^i(H)^G$. Firstly, if $S\in\im(\pi_Q)$, then  by Proposition \ref{prop-CogrPar} there is an open neighbourhood $\mathcal{U}_S$ of $S$ in $\mathcal{FS}^i(H)^G$ and a symmetry $Q_S$ such that $\mathcal{U}_S\subset\im(\pi_{Q_S})$. Thus, by Lemma \ref{lem-CogrPar}, $\mathcal{U}_S\subset\im(\pi_Q)$ showing that the latter set is open. Secondly, let $\{S_n\}_{n\in\mathbb{N}}\subset\im(\pi_Q)$ be a sequence that converges to some $S\in\mathcal{FS}^i(H)^G$. Again by Proposition \ref{prop-CogrPar}, there is a neighbourhood $\mathcal{U}_S$ of $S$ in $\mathcal{FS}^i(H)^G$ and a symmetry $Q_S$ such that $\mathcal{U}_S\subset\im(\pi_{Q_S})$. Now $S_n\in\mathcal{U}_S$ for sufficiently large $n$, which by Lemma \ref{lem-CogrPar} implies that $\mathcal{U}_S\subset\im(\pi_Q)$ and thus $S\in\im(\pi_Q)$. Consequently, the image of $\pi_Q$ is closed, which finishes the proof.    
\end{proof}
\noindent
In what follows we denote by $B_Q$ the image of the map $\pi_Q$ for a given symmetry $Q$. Moreover, note that if $Q\in\mathcal{FS}^i(H)^G$ is a symmetry and we set $S=Q$ in Proposition \ref{prop-CogrPar}, then \eqref{V} and \eqref{QS} show that $Q_Q=Q$. Consequently, for every symmetry $Q\in\mathcal{FS}^i(H)^G$ there is some $S\in\mathcal{FS}^i(H)^G$ such that $Q_S=Q$. Henceforth we simplify our notation by not specifying $S$ anymore. 

\begin{prop}
The map $\pi_Q:\GL(H)^G\times\mathcal{KS}(H)^G\rightarrow B_Q$ is the projection of a locally trivial fibre-bundle with fibre given by the isotropy group of $Q\in\mathcal{FS}^i(H)^G$.
\end{prop}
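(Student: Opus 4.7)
The plan is to recognize $\pi_Q$ as the orbit map of $Q$ under the $\tau$-action of the topological group $\mathcal{G}^G:=\GL(H)^G\times\mathcal{KS}(H)^G$ on $\mathcal{FS}^i(H)^G$, since by construction $\pi_Q(M,K)=\tau_{(M,K)}(Q)$. The orbit is $B_Q$, and the isotropy group $\mathrm{Iso}(Q)=\pi_Q^{-1}(Q)$ is indeed the fibre over $Q$. The key functorial identity $\pi_Q(h_1\cdot h_2)=\tau_{h_1}(\pi_Q(h_2))$ recorded in the proof of Proposition \ref{prop-CogrPar} will be used throughout: it expresses that $\pi_Q$ intertwines left multiplication on $\mathcal{G}^G$ with $\tau$ on $B_Q$, and that the fibre over any $S\in B_Q$ is a right coset $h\cdot\mathrm{Iso}(Q)$ for any $h\in\pi_Q^{-1}(S)$.

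By the standard principle for orbit maps of topological group actions, to produce local trivializations it suffices to exhibit continuous local sections over an open cover of $B_Q$. For $S\in B_Q$, Proposition \ref{prop-CogrPar} furnishes a neighbourhood $\mathcal{U}_S$ and a section $\sigma_S:\mathcal{U}_S\to\mathcal{G}^G$ of $\pi_{Q_S}$, where the symmetry $Q_S$ need not coincide with $Q$. However, since $S\in B_Q\cap B_{Q_S}$, the computation at the heart of Lemma \ref{lem-CogrPar} produces an element $\tilde h\in\mathcal{G}^G$ with $\pi_{Q_S}(h)=\pi_Q(h\cdot\tilde h)$ for every $h\in\mathcal{G}^G$. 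After shrinking $\mathcal{U}_S$ so that it lies inside $B_Q$, which is possible because $B_Q$ is open in $\mathcal{FS}^i(H)^G$ by Corollary \ref{cor-CogrPar}, the rule $\tilde\sigma_S(T):=\sigma_S(T)\cdot\tilde h$ defines the desired continuous local section of $\pi_Q$ on $\mathcal{U}_S$.

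With $\tilde\sigma_S$ at hand, the local trivialization I would take is the standard one,
\[\Phi_S:\mathcal{U}_S\times\mathrm{Iso}(Q)\to\pi_Q^{-1}(\mathcal{U}_S),\qquad (T,g)\mapsto\tilde\sigma_S(T)\cdot g,\]
with inverse $h\mapsto(\pi_Q(h),\tilde\sigma_S(\pi_Q(h))^{-1}\cdot h)$. That the second coordinate of the inverse really lands in $\mathrm{Iso}(Q)$ is a one-line check using $\tau_{\tilde\sigma_S(T)}(Q)=T$. Continuity of both maps is immediate from continuity of the group operations in $\mathcal{G}^G$, of $\tilde\sigma_S$, and of $\pi_Q$, and $\pi_Q\circ\Phi_S=\pr_1$ is a direct verification.

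The main obstacle is the single step of upgrading the section $\sigma_S$ of $\pi_{Q_S}$ supplied by Proposition \ref{prop-CogrPar} to a section of $\pi_Q$, since in general $Q_S\neq Q$ and the two orbit maps have different reference symmetries built into them. Once this conversion is carried out through Lemma \ref{lem-CogrPar} together with the openness of $B_Q$, the fibre-bundle structure is a formal consequence of the orbit-map-to-principal-bundle machinery for actions of topological groups, and no further analytic input beyond what has already been established in this section is required.
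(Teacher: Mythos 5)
Your proof is correct, and it takes a slightly different route from the paper's. The paper first produces a local section of $\pi_Q$ over a neighbourhood of $Q$ itself, using the observation recorded just before the Proposition that $Q_Q=Q$ (so Proposition \ref{prop-CogrPar} applied at $S=Q$ directly gives a section of $\pi_Q$), and then \emph{transports} the resulting local trivialisation to a neighbourhood of an arbitrary $T\in B_Q$ via the homeomorphism $\tau_{\tilde h}$, where $\pi_Q(\tilde h)=T$. You instead work directly near each $S\in B_Q$: Proposition \ref{prop-CogrPar} gives a section $\sigma_S$ of $\pi_{Q_S}$, and you convert it into a section of $\pi_Q$ by right-multiplying with the element $\tilde h$ produced by the coset computation in the proof of Lemma \ref{lem-CogrPar}, thereby bypassing the special fact $Q_Q=Q$ altogether. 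Both routes exploit the same homogeneity of the $\tau$-action and cost essentially the same amount of work; yours uses Lemma \ref{lem-CogrPar} as an explicit "change of reference symmetry" mechanism, while the paper's uses the surjectivity of $\pi_Q$ onto $B_Q$ to push one trivialisation around the orbit. One small remark: the shrinking of $\mathcal{U}_S$ into $B_Q$ is actually unnecessary, since $\pi_Q\circ\tilde\sigma_S=\mathrm{id}_{\mathcal{U}_S}$ already forces $\mathcal{U}_S\subset\im(\pi_Q)=B_Q$; this is a cosmetic point and does not affect correctness.
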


\begin{proof}
The proof is almost identical to \cite[Prop. 2.4]{SFLPejsachowiczI}, but we sketch the argument for the convenience of the reader. By Proposition \ref{prop-CogrPar} there is an open subset $\mathcal{U}_Q\subset B_Q$ and a local section $\sigma$ of $\pi_Q$ on $\mathcal{U}_Q$. Then 

\[\eta:\mathcal{U}_Q\times\pi^{-1}_Q(Q)\rightarrow\pi^{-1}_Q(\mathcal{U}_Q),\quad \eta(S,h)=\sigma(S)\cdot h\]
is a local trivialisation over $\mathcal{U}_Q$ with inverse

\[\eta^{-1}(h)=(\pi_Q(h),(\sigma\circ\pi_Q)(h))^{-1}\cdot h).\]
Now this trivialisation can be transported to any point $T\in B_Q$ as follows. As $\pi_Q$ is surjective onto $B_Q$, there is some $\tilde{h}\in \GL(H)^G\times\mathcal{KS}(H)^G$ such that $\pi_Q(\tilde{h})=\tau_{\tilde{h}}(Q)=T$. Then $\mathcal{U}:=\tau_{\tilde{h}}(\mathcal{U}_Q)$ is a neighbourhood of $T$ and $\tau_{\tilde{h}}:\mathcal{U}_Q\rightarrow\mathcal{U}$ is a homeomorphism. Now a local trivialisation over $\mathcal{U}$ is given by

\[\eta':\mathcal{U}\times\pi^{-1}_Q(Q)\rightarrow\pi^{-1}_Q(\mathcal{U}),\quad \eta'(S,h)=\tilde{h}\cdot\sigma(\tau_{\tilde{h}^{-1}}(S))\cdot h,\]
which shows the claim of the proposition.
\end{proof}
\noindent 
Now we finally have everything at hand to prove Theorem \ref{thm-cogredpar} along the lines that we already sketched at the beginning of this section. Let $L=\{L_\lambda\}_{\lambda\in I}$ be a path in $\mathcal{FS}^i(H)^G$. Then the trace $L(I)$ of $L$ is contained in a path component $C$ of $\mathcal{FS}^i(H)^G$. Now let $S\in C$ be arbitrary and let $Q$ be the associated proper symmetry by Propositon \ref{prop-CogrPar}. By the previous proposition, 

\begin{align}\label{piQ_S}
\pi_{Q}:\GL(H)^G\times\mathcal{KS}(H)^G\rightarrow B_{Q}
\end{align}
is the projection of a locally trivial fibre-bundle and clearly $C\subset B_{Q}$ by Corollary \ref{cor-CogrPar}. Let $(E,I,\pi)$ be its pullback by $L$, i.e., the bundle having 

\[E=\{(\lambda,h)\in I\times(\GL(H)^G\times\mathcal{KS}(H)^G):\, L_\lambda=\pi_Q(h)\}\]     
as total space and as bundle projection $\pi$ the restriction of the projection onto the first component. Note that the projection onto the second component $I\times(\GL(H)^G\times\mathcal{KS}(H)^G)\rightarrow\GL(H)^G\times\mathcal{KS}(H)^G$ yields a bundle map from $E$ to the total space of \eqref{piQ_S}. By composing with this map, sections of $(E,I,\pi)$ yield liftings of $L$, and thus the desired map $\tilde{L}:I\rightarrow\GL(H)^G\times\mathcal{KS}(H)^G$ such that $L_\lambda=\pi_Q\circ\tilde{L}_\lambda$ for all $\lambda\in I$. Now $(E,I,\pi)$ is a bundle over the contractible space $I$ and thus trivial. As the triviality of the bundle implies the existence of a globally defined section, this proves Theorem \ref{thm-cogredpar}.\\
As we have pointed out before, the main difficulty in the above argument in comparison to \cite{SFLPejsachowiczI} is that $B_Q$ can be different from $\mathcal{FS}^i(H)^G$. The applications in this paper in Section \ref{sect-simpEx} deal with the rather simple case of a $G=\mathbb{Z}_2$-action. We conclude this section by an example which shows that even in this case $B_Q\neq\mathcal{FS}^i(H)^G$ is possible. Let $H$ be an infinite dimensional Hilbert space and consider on $H\oplus H$ the $\mathbb{Z}_2$-action which maps $(u,v)$ to $(u,-v)$ by its non-trivial element. Then every equivariant operator is of diagonal form. If we now take the proper symmetry $Q(u,v)=(u,-v)$, then we obtain for $M=\diag(A,B)\in\GL(H\oplus H)^{\mathbb{Z}_2}$ and $K=\diag(C,D)\in\mathcal{KS}(H\oplus H)^{\mathbb{Z}_2}$ 

\[\pi_Q(M,K)=\begin{pmatrix}
AA^\ast+C&0\\
0&-BB^\ast+D
\end{pmatrix}\]
and thus every element in $B_Q$ is of the form $\diag(S,T)$, where the essential spectrum of $S$ is on the positive half-line and the essential spectrum of $T$ is on the negative half-line. Thus the proper symmetry $\tilde{Q}:=-Q$, $\tilde{Q}(u,v)=(-u,v)$ is not an element of $B_Q$.


\section{Proof of Theorem \ref{thm:main}}\label{sect-thm:main}
We note at first that it suffices to prove the theorem in the case that the Hessians $L_\lambda$ are strongly indefinite, i.e., $L_\lambda\in\mathcal{FS}^i(H)^G$, $\lambda\in I$. Indeed, if $f:I\times H\rightarrow\mathbb{R}$ is a family of $G$-invariant functionals as in Theorem \ref{thm:main} such that $L_\lambda\in\mathcal{FS}(H)^G$, $\lambda\in I$, then consider the family of functionals $\overline{f}:I\times H\times H\times H\rightarrow\mathbb{R}$ given by

\[\overline{f}_\lambda(w,u,v)=f_\lambda(u)+\frac{1}{2}\|w\|^2-\frac{1}{2}\|v\|^2.\]
Clearly, this family has the same bifurcation points of critical points as $f$, and $\overline{f}$ is $G$-invariant under the orthogonal action $g(w,u,v)=(w,gu,v)$. Finally, the corresponding path of Hessians $\overline{L}=\{\overline{L}_\lambda\}_{\lambda\in I}$ has the same $G$-equivariant spectral flow by \eqref{sflAdditivity}.\\
Thus we henceforth assume that $L_\lambda\in\mathcal{FS}^i(H)^G$, $\lambda\in I$, and obtain from Theorem \ref{thm-cogredpar} a $G$-equivariant cogredient parametrix for $L$, i.e., a path  $M:I\rightarrow\GL(H)^G$ such that

\begin{align}\label{proofCogred}
M^\ast_\lambda L_\lambda M_\lambda= Q+K_\lambda,\qquad \lambda\in I,
\end{align}
where $K_\lambda$ are $G$-equivariant and compact, and $Q\in\mathcal{FS}^i(H)^G$ is a $G$-equivariant symmetry. Note that the functionals of the family $\tilde{f}:I\times H\rightarrow\mathbb{R}$, $\tilde{f}_\lambda(u)=f_\lambda(M_\lambda u)$ are $G$-invariant and $\nabla\tilde{f}_\lambda(u)=M^\ast_\lambda(\nabla f_\lambda)(M_\lambda u)$ so that $\tilde{f}_\lambda$ and $f_\lambda$ have the same bifurcation points of critical points. Moreover, the Hessians $\tilde{L}_\lambda$ of $\tilde{f}_\lambda$ are given by $\tilde{L}_\lambda=M^\ast_\lambda L_\lambda M_\lambda$. Note that $\tilde{L}_\lambda\in\mathcal{FS}(H)^G$, $\lambda\in I$ and thus the $G$-equivariant spectral flow is defined.  

\begin{lemma}
 The paths of operators $L$ and $\tilde{L}$ from above have the same $G$-equivariant spectral flow, i.e.,

\[\sfl_G(L)=\sfl_G(\tilde{L})\in RO(G).\]
\end{lemma}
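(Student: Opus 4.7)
The plan is to apply the homotopy invariance property \eqref{sflHomotopy} to a two-parameter deformation of cogredient transformations connecting $L$ to $\tilde L$. Since $M:I\to\GL(H)^G$ is given and $\tilde L_\lambda=M_\lambda^\ast L_\lambda M_\lambda$, I would sweep a second parameter $s\in I$ through a family of invertible $G$-equivariant operators that interpolates between the identity $I_H$ and the path $M$, and pull $L$ back by this family.

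I would split the verification into two steps. First, reduce to the case where the reference parameter starts at the identity by introducing the auxiliary path $\hat L_\lambda:=M_0^\ast L_\lambda M_0$ and $N_\lambda:=M_0^{-1}M_\lambda$, so that $N_0=I_H$ and $\tilde L_\lambda=N_\lambda^\ast\hat L_\lambda N_\lambda$. The homotopy
\[
h_1(s,\lambda)=N_{s\lambda}^\ast\,\hat L_\lambda\,N_{s\lambda}
\]
satisfies $h_1(0,\lambda)=\hat L_\lambda$ and $h_1(1,\lambda)=\tilde L_\lambda$, and its $\lambda$-endpoints $h_1(s,0)=\hat L_0$ (constant in $s$) and $h_1(s,1)=N_s^\ast\hat L_1 N_s$ remain invertible for every $s$ because $\hat L_0,\hat L_1$ and $N_s$ are invertible. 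Hence \eqref{sflHomotopy} yields $\sfl_G(\hat L)=\sfl_G(\tilde L)$. Next, I would compare $L$ with $\hat L$: choosing a path $\gamma:I\to\GL(H)^G$ with $\gamma(0)=I_H$ and $\gamma(1)=M_0$, the homotopy $h_2(s,\lambda)=\gamma(s)^\ast L_\lambda\gamma(s)$ deforms $L$ to $\hat L$ through a family of selfadjoint $G$-equivariant Fredholm paths with invertible endpoints $\gamma(s)^\ast L_i\gamma(s)$, so that \eqref{sflHomotopy} gives $\sfl_G(L)=\sfl_G(\hat L)$. Combining the two steps produces the claim.

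The step I expect to be the main obstacle is the construction of the path $\gamma$ from $I_H$ to $M_0$ in $\GL(H)^G$, since this requires $M_0$ to lie in the identity component of the $G$-equivariant invertibles. In the non-equivariant case this is automatic by Kuiper's theorem; in the equivariant setting one appeals to the analogous contractibility of $\GL(H)^G$ obtained from the isotypic decomposition of $H$, assuming each isotypic summand is infinite-dimensional over the appropriate commutant division algebra. As an alternative more tailored to the paper, the parametrix $M$ from Theorem \ref{thm-cogredpar} is only determined up to the isotropy action of $\GL(H)^G\times\mathcal{KS}(H)^G$ on $Q$, so one may modify it within the fibres of $\pi_Q$ so that $M$ and the constant path $I_H$ lie in the same path component of the mapping space $\mathrm{Map}(I,\GL(H)^G)$, after which only the first of the two homotopies above is needed. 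Either route yields the required equality $\sfl_G(L)=\sfl_G(\tilde L)\in RO(G)$.
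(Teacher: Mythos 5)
Your first step coincides with the paper's: the homotopy $h_1(s,\lambda)=N_{s\lambda}^\ast \hat L_\lambda N_{s\lambda}=M_{s\lambda}^\ast L_\lambda M_{s\lambda}$ has invertible $\lambda$-endpoints and reduces the problem to comparing $L$ with $\hat L_\lambda=M_0^\ast L_\lambda M_0$. The divergence, and the gap, is in your second step. You want a path $\gamma$ in $\GL(H)^G$ from $I_H$ to $M_0$; as you yourself flag, this requires $M_0$ to lie in the identity component of $\GL(H)^G$, and that is \emph{not} guaranteed in the setting of the paper. Equivariant Kuiper-type contractibility of $\GL(H)^G$ holds only under hypotheses on the isotypic decomposition (each isotypic summand infinite-dimensional over its commutant), and the paper assumes nothing of the sort — indeed irreducible representations may appear with finite, possibly odd or even multiplicity, in which case $\GL(H)^G$ has a $\GL(n,\mathbb{R})$-type factor with two components. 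Your fallback of adjusting $M$ by elements of the fibre of $\pi_Q$ also does not resolve this: the fibre over $Q$ is the isotropy group of $Q$ under the $\tau$-action, which is a proper subgroup of $\GL(H)^G\times\mathcal{KS}(H)^G$ (roughly, operators commuting with the $\pm$-splitting of $Q$, up to compacts), so one cannot freely replace $M_0$ by $I_H$ within the fibre.

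The paper avoids all of this via polar decomposition: write $M_0=UR$ with $U=M_0(M_0^\ast M_0)^{-1/2}$ orthogonal and $R=(M_0^\ast M_0)^{1/2}$ positive, both $G$-equivariant by Lemma \ref{FunctCalc}. The straight-line homotopy $\{((1-s)R+sI_H)U^\ast L_\lambda U((1-s)R+sI_H)\}$ stays in $\mathcal{FS}(H)^G$ with invertible endpoints, because convex combinations of positive definite operators are invertible — no connectivity of $\GL(H)^G$ is needed for the positive part. For the orthogonal part, rather than trying to connect $U$ to $I_H$, the paper observes that $U^\ast\colon E(L_\lambda,[0,a])\to E(U^\ast L_\lambda U,[0,a])$ is a $G$-equivariant isomorphism for every $\lambda$ and $a$, so the two paths have the same $G$-equivariant spectral flow directly from the definition \eqref{sfl-equiv}. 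This is the step that replaces your appeal to path-connectedness, and it works unconditionally. If you want to salvage your route, you would need either to add a hypothesis guaranteeing contractibility of $\GL(H)^G$, or to adopt the polar-decomposition device; without one of these, the argument as written does not close.
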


\begin{proof}
We note at first that $\tilde{L}$ is homotopic to the path $\{M^\ast_0L_\lambda M_0\}_{\lambda\in I}$ and the corresponding homotopy does not affect the spectral flow by \eqref{sflHomotopy} as $L_0, L_1\in\GL(H)$ by the assumptions of Theorem \ref{thm:main}. Now consider the polar decomposition $M_0=UR$ of $M_0$, where $U=M_0(M^\ast_0M_0)^{-\frac{1}{2}}$ is orthogonal and $R=(M^\ast_0M_0)^\frac{1}{2}$ is selfadjoint and positive. Moreover, $U$ and $R$ are $G$-equivariant by Lemma \ref{FunctCalc}. We have

\[M^\ast_0L_\lambda M_0=RU^\ast L_\lambda UR,\quad\lambda\in I,\]
and see that the homotopy 

\[\{((1-s)R+sI_H)U^\ast L_\lambda U((1-s)R+sI_H)\}_{(s,\lambda)\in I\times I}\]
deforms $\{M^\ast_0L_\lambda M_0\}_{\lambda\in I}$ into the path $\{U^\ast L_\lambda U\}_{\lambda\in I}$. Note that also this homotopy does not affect the spectral flow by \eqref{sflHomotopy} as $L_0, L_1\in\GL(H)$ and $(1-s)R+sI_H\in GL(H)$ for all $s\in I$. Finally, for any $a>0$,

\[U^\ast:E(L_\lambda,[0,a])\rightarrow E(U^\ast L_\lambda U,[0,a]),\quad\lambda\in I,\]
is a $G$-equivariant isomorphism and thus

\[[E(L_\lambda,[0,a])]=[E(U^\ast L_\lambda U,[0,a])],\quad\lambda\in I.\]
Consequently, it follows from the definition \eqref{sfl-equiv} that $\{U^\ast L_\lambda U\}_{\lambda\in I}$ and $L$ have the same $G$-equivariant spectral flow, which proves the lemma.  
\end{proof}
\noindent
In summary, we can henceforth assume without loss of generality that $L_\lambda=Q+K_\lambda$, $\lambda\in I$, where the operators $K_\lambda$ are compact, selfadjoint and $G$-equivariant, and $Q=P-(I_H-P)$ for some $G$-equivariant orthogonal projection $P$ having infinite dimensional kernel and range.

\subsection{Reduction to Finite Dimensions I}

We begin by a technical lemma on decomposing $H$ into finite dimensional invariant subspaces. Note that in case of a trivial group action, this is merely the existence of an orthonormal basis.

\begin{lemma}
There is a sequence of finite-dimensional $G$-invariant subspaces $H_n\subset H$, $n\in\mathbb{N}$, such that 

\[H_n\subset H_{n+1}\quad\text{and}\quad P_nu\xrightarrow{n\rightarrow\infty} u\,\,\text{for all}\,\, u\in H,\]
where $P_n$ denotes the orthogonal projection onto $H_n$.
\end{lemma}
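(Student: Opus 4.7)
The plan is to reduce the claim to a standard consequence of the Peter--Weyl theorem for compact Lie groups. Since the action of $G$ on $H$ is orthogonal and continuous, $H$ is a continuous orthogonal representation of $G$. By Peter--Weyl, every such representation decomposes as a Hilbert orthogonal direct sum of finite-dimensional irreducible $G$-invariant subspaces, i.e., $H=\widehat{\bigoplus}_{i\in\Lambda}V_i$ with $\dim V_i<\infty$ and each $V_i$ closed under the $G$-action. Because $H$ is separable, the index set $\Lambda$ must be at most countable (any uncountable family of pairwise orthogonal nonzero subspaces would contradict separability).

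Enumerating $\Lambda=\mathbb{N}$ (discarding any zero summands if $\Lambda$ is finite by repeating the last entry), I would set
\[H_n:=V_1\oplus V_2\oplus\cdots\oplus V_n.\]
Each $H_n$ is finite-dimensional, $G$-invariant as a direct sum of $G$-invariant subspaces, and clearly $H_n\subset H_{n+1}$. Denoting by $P_n$ the orthogonal projection onto $H_n$, the convergence $P_nu\to u$ for every $u\in H$ is just the statement that $\{V_i\}_{i\in\mathbb{N}}$ is a complete orthogonal decomposition of $H$, equivalently Parseval's identity in this setting.

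If one prefers to avoid explicitly citing Peter--Weyl in its full strength, an alternative is to start from a countable dense set $\{v_k\}_{k\in\mathbb{N}}\subset H$ and, for each $k$, use that the closed $G$-invariant subspace generated by $v_k$ decomposes into finite-dimensional irreducibles (so that a truncation $F_k$ of this decomposition is finite-dimensional, $G$-invariant, and contains a vector $w_k$ with $\|v_k-w_k\|<1/k$). Then $H_n:=F_1+F_2+\cdots+F_n$ is finite-dimensional, $G$-invariant, and nested, and $\bigcup_n H_n$ is dense in $H$ by construction, which yields $P_nu\to u$ strongly.

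The main obstacle is conceptual rather than technical: one has to recognise that complete reducibility into finite-dimensional subrepresentations, which is automatic for compact Lie groups by Peter--Weyl, is exactly what the statement asks for; the separability of $H$ then ensures countability so that an exhausting chain can be formed. No genuine analytic difficulty arises once the correct structural result is invoked.
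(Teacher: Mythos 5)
Your proof is correct, but it takes a genuinely different route from the paper's. You invoke the Peter--Weyl theorem in its full strength to obtain a Hilbert orthogonal decomposition $H=\widehat{\bigoplus}_{i}V_i$ into finite-dimensional irreducibles, then use separability to make the index set countable and take nested partial sums. The paper instead runs a Zorn's lemma argument: it considers the family $\mathcal{F}$ of orthonormal subsets $B\subset H$ with the property that each $x\in B$ has orbit $Gx$ contained in a finite-dimensional subspace, picks a maximal element $B^\ast=\{e_1,e_2,\ldots\}$ (countable by separability), and defines $H_n$ as the smallest $G$-invariant subspace containing $\{e_1,\ldots,e_n\}$. The only representation-theoretic input is the weaker fact (cited from Ize--Vignoli, Cor.~5.4(a)) that every $G$-representation on an infinite-dimensional Banach space has a nonzero finite-dimensional subrepresentation; this is used both to show $\mathcal{F}\neq\emptyset$ and, in the final contradiction, to show that $(\overline{\bigcup_n H_n})^\perp$ cannot be nonzero. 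In effect, the paper reproves the relevant portion of Peter--Weyl on the fly, keeping the argument self-contained and applicable to real (not just complex) representations without any further remark, whereas your version is shorter at the price of citing the full decomposition theorem (which you should, strictly speaking, state in its real-orthogonal form since $H$ here is a real Hilbert space). Your alternative construction via a countable dense set $\{v_k\}$ and finite-dimensional $G$-invariant approximants $F_k$ is closer in spirit to the paper's argument and also correct.
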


\begin{proof}
Let $\mathcal{F}$ be the set of all subsets $B\subset H$ such that

\begin{itemize}
 \item[(i)] $\|x\|=1$ for all $x\in B$,
 \item[(ii)] $\langle x,y\rangle=0$ for all $x,y\in B$, $x\neq y$,
 \item[(iii)] for all $x\in B$ there exists a subspace $V\subset H$ of finite dimension such that 
 \[Gx:=\{gx:\, g\in G\}\subset V.\]
\end{itemize}
Note that $\mathcal{F}$ is not empty as every representation of $G$ on an infinite dimensional Banach space has a finite dimensional subrepresentation by \cite[Cor. 5.4 (a)]{IzeVignoli}. We now partially order $\mathcal{F}$ by inclusion and aim to use Kuratowski-Zorn lemma. If $\mathcal{E}\subset\mathcal{F}$ is totally ordered, then the union of all elements in $\mathcal{E}$ satisfies $(i)-(iii)$ from above and thus is an upper bound for $\mathcal{E}$. Consequently, there exists a maximal element $B^\ast$ of $\mathcal{F}$. As $B^\ast$ is orthonormal and $H$ is separable, $B^\ast$ is countable, say $B^\ast=\{e_1,e_2,\ldots\}$. We now let $H_n$ be the intersection of all $G$-invariant subspaces of $H$ that contain $\{e_1,\ldots,e_n\}$. Note that $H_n$ is of finite dimension by $(iii)$. Moreover,

\[U:=\overline{\bigcup^\infty_{n=1}H_n}\]
is a $G$-invariant subspace of $H$. Now assume that $U\neq H$. Then $U^\perp$ is a $G$-invariant subspace and thus contains a finite dimensional subrepresentation. The latter claim is trivial if $U^\perp$ is of finite dimension, and otherwise again follows by \cite[Cor. 5.4 (a)]{IzeVignoli}. Now we take an element $v$, $\|v\|=1$, of this finite dimensional subrepresentation of $U^\perp$. Then $B^\ast\cup\{v\}\in\mathcal{F}$ is larger than $B^\ast$ which contradicts the maximality. Thus $U=H$, which in particular implies that $(P_n)_{n\in\mathbb{N}}$ weakly converges to the identity.        
\end{proof}
\noindent
Let us recall that $Q=P-(I_H-P)$ for some orthogonal projection $P$ having infinite dimensional kernel and range.

\begin{cor}
There is a sequence of finite dimensional $G$-invariant subspaces $H_n\subset H$, $n\in\mathbb{N}$, such that 

\[H_n\subset H_{n+1},\quad  [P_n,Q]=0,\,n\in\mathbb{N}, \quad\text{and}\quad P_nu\xrightarrow{n\rightarrow\infty} u\,\,\text{for all}\,\, u\in H,\]
where $P_n$ denotes the orthogonal projection onto $H_n$.
\end{cor}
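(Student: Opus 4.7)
The plan is to reduce to the previous lemma by first decomposing $H$ into the two $Q$-eigenspaces. Since $Q=P-(I_H-P)$ is $G$-equivariant and $P$ is an orthogonal projection, both $H^+:=\im(P)$ and $H^-:=\ker(P)$ are closed, $G$-invariant subspaces of $H$, each infinite dimensional by hypothesis on $Q$. They are separable as subspaces of $H$, so the previous lemma applies to each of them as a $G$-Hilbert space in its own right.

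Hence I would obtain increasing sequences of finite dimensional $G$-invariant subspaces $H^+_n\subset H^+$ and $H^-_n\subset H^-$ whose orthogonal projections (in $H^\pm$, respectively) converge strongly to the identity on $H^\pm$. I would then set
\[
H_n:=H^+_n\oplus H^-_n\subset H,
\]
which is manifestly finite dimensional, $G$-invariant, and increasing in $n$. Denote by $P^\pm_n:H\to H$ the orthogonal projection onto $H^\pm_n$ (extended by zero on the orthogonal complement in $H$). Since $H^+_n\perp H^-_n$, the orthogonal projection onto $H_n$ is simply $P_n=P^+_n+P^-_n$.

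The three properties then follow immediately. For strong convergence, decompose $u=Pu+(I_H-P)u$ and note that $P^+_n(Pu)\to Pu$ in $H^+$ and $P^-_n((I_H-P)u)\to (I_H-P)u$ in $H^-$, so $P_nu\to u$ in $H$. For the commutation $[P_n,Q]=0$, observe that $P^+_n$ has image in $\im(P)$ and vanishes on $\ker(P)$, so $PP^+_n=P^+_n=P^+_nP$; symmetrically, $PP^-_n=0=P^-_nP$. Summing, $[P,P_n]=0$, and since $Q=2P-I_H$ this gives $[Q,P_n]=0$.

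There is essentially no hard step here; the only substantive point is the observation that the hypothesis $Q\in\mathcal{FS}^i(H)^G$ guarantees that $\im(P)$ and $\ker(P)$ are \emph{both} infinite dimensional $G$-invariant separable Hilbert spaces, so that the previous lemma may be applied to each in the same form. (Even if one of them were finite dimensional, the conclusion would hold trivially by taking $H^\pm_n$ to be that entire subspace for all $n$, so this is a non-issue.) The whole argument is therefore a direct corollary, not requiring any modification of the proof of the lemma.
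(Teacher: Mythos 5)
Your proof is correct and matches the paper's argument essentially verbatim: both decompose $H$ into the $Q$-eigenspaces $\im(P)$ and $\ker(P)$, apply the preceding lemma to each $G$-invariant infinite-dimensional summand, set $H_n := H^+_n \oplus H^-_n$ with $P_n = P^+_n + P^-_n$, and verify $[P_n,Q]=0$ by a direct computation using that $P^\pm_n$ respects the splitting. No gaps.
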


\begin{proof}
We denote by $H^+$ the image of $P$ and by $H^-$ its kernel, which are both invariant under $G$ and of infinite dimension. Let $H^\pm_n$ be the corresponding spaces and $P^\pm_n$ the projections as given by the previous lemma. We set $H_n:=H^+_n\oplus H^-_n$ and note that $P_n:=P^+_n+P^-_n$ is the orthogonal projection onto $H_n$ if we regard $P^\pm_n$ as orthogonal projection on $H$ with kernel extended to $H^\mp$. Now the first and the third claimed property are satisfied. The remaining one follows from

\begin{align*}
P_nQ-QP_n&=(P^+_n+P^-_n)(P-(I_H-P))-(P-(I_H-P))(P^+_n+P^-_n)\\
&=P^+_nP-P^-_n(I_H-P)-PP^+_n+(I_H-P)P^-_n=0,
\end{align*}
where we use that $P_1P_2=P_2P_1=P_1$ for orthogonal projections $P_1$, $P_2$ such that $\im(P_1)\subset\im(P_2)$.
\end{proof}
\noindent
Note that as $P_n$ commutes with $Q$ by the previous lemma, it follows that $Q(H_n)=H_n$ as well as $Q(H^\perp_n)=H^\perp_n$.

\begin{lemma}\label{bif-lemma-isomorphisms}
There is $n_0\in\mathbb{N}$ such that for all $n\geq n_0$

\begin{itemize}
\item[(i)] $(I_H-P_n)L_\lambda\mid_{H^\perp_n}\in\GL(H^\perp_n),\quad \lambda\in I$,
\item[(ii)] $sL_\lambda+(1-s)((I_H-P_n)L_\lambda(I_H-P_n)+P_nL_\lambda P_n)\in\GL(H),\quad \lambda=0,1,\, s\in I.$
\end{itemize}
\end{lemma}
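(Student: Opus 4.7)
The plan is to reduce both parts to the single observation that the family $\{K_\lambda\}_{\lambda\in I}$, being the continuous image of the compact interval $I$, is a norm-compact subset of the compact operators on $H$. Combined with the fact that each $K_\lambda$ is self-adjoint and that $P_n\to I_H$ strongly, a standard $\varepsilon$-net argument yields
\[
\sup_{\lambda\in I}\|(I_H-P_n)K_\lambda\|\xrightarrow{n\to\infty} 0,\qquad \sup_{\lambda\in I}\|K_\lambda(I_H-P_n)\|\xrightarrow{n\to\infty} 0,
\]
since for a single compact self-adjoint $K$ one has $\|K-P_nK\|\to 0$ and (by passing to adjoints) $\|K-KP_n\|\to 0$. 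This uniform smallness will be the sole analytic input.

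For (i), the hypothesis $[P_n,Q]=0$ implies that $Q$ preserves $H_n^\perp$ and that $Q|_{H_n^\perp}$ is its own inverse on $H_n^\perp$, hence an isometry with $\|(Q|_{H_n^\perp})^{-1}\|=1$. Since $(I_H-P_n)Q|_{H_n^\perp}=Q|_{H_n^\perp}$, I would write
\[
(I_H-P_n)L_\lambda|_{H_n^\perp}=Q|_{H_n^\perp}+(I_H-P_n)K_\lambda|_{H_n^\perp}.
\]
Choosing $n_0$ so large that $\sup_{\lambda\in I}\|(I_H-P_n)K_\lambda\|<1$ for $n\ge n_0$, the right-hand side is a Neumann perturbation of the isomorphism $Q|_{H_n^\perp}$, and hence invertible on $H_n^\perp$ for every $\lambda\in I$.

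For (ii), I would decompose $H=H_n\oplus H_n^\perp$ and write $L_\lambda=D_\lambda+N_\lambda$, where
\[
D_\lambda=P_nL_\lambda P_n+(I_H-P_n)L_\lambda(I_H-P_n),\quad N_\lambda=P_nL_\lambda(I_H-P_n)+(I_H-P_n)L_\lambda P_n.
\]
Because $Q$ commutes with $P_n$, the $Q$-contribution to $N_\lambda$ vanishes, leaving $N_\lambda=P_nK_\lambda(I_H-P_n)+(I_H-P_n)K_\lambda P_n$, whose norm is bounded by $\|K_\lambda(I_H-P_n)\|+\|(I_H-P_n)K_\lambda\|$ and therefore tends to zero uniformly in $\lambda$. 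A direct computation gives
\[
sL_\lambda+(1-s)D_\lambda=L_\lambda-(1-s)N_\lambda.
\]
At $\lambda=0,1$, invertibility of $L_\lambda$ gives a lower bound $c:=\min(\|L_0^{-1}\|^{-1},\|L_1^{-1}\|^{-1})>0$, and enlarging $n_0$ so that $\sup_{\lambda\in\{0,1\}}\|N_\lambda\|<c$ for $n\ge n_0$ makes $L_\lambda-(1-s)N_\lambda$ invertible for all $s\in I$, again by Neumann series.

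There is no serious obstacle here; the content of the lemma is really the uniform norm-smallness of $(I_H-P_n)K_\lambda$ and $K_\lambda(I_H-P_n)$ over the compact parameter space, which gives the two perturbation estimates needed. The only point requiring a little care is ensuring uniformity of these estimates over $\lambda\in I$ (not merely pointwise), which is why the compactness of the family $\{K_\lambda\}_{\lambda\in I}$ in operator norm—and not just the compactness of each individual $K_\lambda$—is essential.
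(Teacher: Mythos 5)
Your proof is correct, and for part (ii) it is genuinely cleaner than the paper's. Both arguments rest on the same analytic input, namely the uniform estimate $\sup_{\lambda\in I}\|(I_H-P_n)K_\lambda\|\to 0$, which the paper obtains by observing that $\{K_\lambda u:\lambda\in I,\|u\|=1\}$ is relatively compact and that $I_H-P_n\to 0$ uniformly on compact sets, while you obtain it by a two-stage $\varepsilon$-net argument through the norm-compactness of $\{K_\lambda\}_{\lambda\in I}$; these are the same estimate in slightly different packaging. For (i) the paper proves Fredholmness of index zero and then injectivity via a lower bound, whereas you use a Neumann series directly on $Q\mid_{H_n^\perp}+(I_H-P_n)K_\lambda\mid_{H_n^\perp}$; these are essentially equivalent, though yours avoids invoking Fredholm theory. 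The real divergence is in (ii): the paper shows the operators are Fredholm of index zero and then runs a compactness/contradiction argument, extracting convergent subsequences to produce a nonzero element of $\ker L_0$ (or $\ker L_1$). You instead decompose $L_\lambda=D_\lambda+N_\lambda$, observe that $[P_n,Q]=0$ forces the off-diagonal part $N_\lambda$ to involve only $K_\lambda$, rewrite $sL_\lambda+(1-s)D_\lambda=L_\lambda-(1-s)N_\lambda$, and conclude by a uniform Neumann perturbation of the invertible endpoints $L_0,L_1$. This reformulation is the identity the paper's ``simple calculation'' hides, and it buys a direct proof with no subsequence extraction; it also exhibits explicitly that only the smallness of the off-diagonal blocks matters. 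Your argument is complete and, in my view, the more transparent of the two.
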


\begin{proof}
We note at first that $$(I_H-P_n)L_\lambda\mid_{H^\perp_n}=Q+(I_H-P_n)K_\lambda\mid_{H^\perp_n}$$ is a compact perturbation of an invertible operator and thus Fredholm of index $0$. Consequently, to prove the first assertion, we only need to show that $(I_H-P_n)L_\lambda\mid_{H^\perp_n}$ is injective.\\
Since $\{K_\lambda\}_{\lambda\in I}$ is a continuous family of compact operators, the set $\{K_\lambda(u):\,\lambda\in I,\, \|u\|=1\}$ is relatively compact. Therefore, as $I_H-P_n$ uniformly converges to $0$ on compact subsets of $H$, there exists $n_0\in\mathbb{N}$ such that

\[\|(I_H-P_n)K_\lambda u\|\leq\frac{1}{2}\|u\|,\quad u\in H,\, \lambda\in I,\, n\geq n_0.\]
Moreover, $\|Qu\|=\|u\|$, $u\in H$, as $Q$ is orthogonal, and thus

\begin{align*}
\|(I_H-P_n)L_\lambda u\|=\|Qu+(I_H-P_n)K_\lambda u\|\geq\frac{1}{2}\|u\|,\quad u\in H^\perp_n,
\end{align*}
showing the injectivity of $(I_H-P_n)L_\lambda\mid_{H^\perp_n}$.\\
To show (ii), we note at first that by a simple calculation

\begin{align*}
&sL_\lambda+(1-s)((I_H-P_n)L_\lambda(I_H-P_n)+P_nL_\lambda P_n)\\
&=Q+sK_\lambda+(1-s)((I_H-P_n)K_\lambda(I_H-P_n)+P_nK_\lambda P_n),
\end{align*}
which are all Fredholm operators of index $0$. We now assume by contradiction that $n_0$ as in the assertion does not exist. Then there are sequences $(u_n)_{n\in\mathbb{N}}$, $\|u_n\|=1$, and $(s_n)_{n\in\mathbb{N}}$ such that

\[Qu_n+s_nK_0u_n+(1-s_n)((I_H-P_n)K_0(I_H-P_n)u_n+P_nK_0 P_nu_n)=0,\quad n\in\mathbb{N}.\]
As $K_0$ is compact and $P_n$ converges on compact subsets of $H$ to the identity, we see that there is a convergent subsequence of $(Qu_n)$. Henceforth, we denote this sequence by the same indices and assume as well that $(s_n)$ converges to some $s^\ast\in I$. It follows from the invertibility of $Q$ that $(u_n)$ converges to some $u\in H$ of norm $1$. Thus
\[\lim_{n\rightarrow\infty} (I_H-P_n)K_0(I_H-P_n)u_n=0,\quad \lim_{n\rightarrow\infty}P_nK_0 P_n u_n=K_0 u,\]
and so
\[L_0 u=Qu+K_0u=Qu+s^\ast K_0 u+(1-s^\ast)K_0 u=0\]
in contradiction to the invertibility of $L_0$. Of course, the same argument applies to the invertible operator $L_1$.
\end{proof}
\noindent
We now set $L^n_\lambda:= P_nL_\lambda\mid_{H_n}:H_n\rightarrow H_n$ and note that these operators are $G$-equivariant. It follows from \eqref{sflNormalisation}, \eqref{sflAdditivity} and Proposition \ref{prop-sfldiffMorse} that for $n\geq n_0$

\begin{align}\label{eqref-finreduction}
\sfl_G(L)=\sfl_G(L^n)=[E^-(L_1)]-[E^-(L_0)]\in RO(G)
\end{align}
and thus the Hessians are reduced to finite dimensions.


\subsection{Reduction to Finite Dimensions II}

For reducing the nonlinear problem to finite dimensions, we need the following technical lemma from \cite[Lem. 3.7]{MJN21} that was shown in the non-equivariant case in \cite{SFLPejsachowiczI} and \cite{BifJac}.

\begin{lemma}\label{lemma-implicit}
Let $H$ be a real Hilbert space and $G$ a compact Lie group acting orthogonally on $H$. Let $U\subset H$ be an open invariant subset of $H$ containing $0\in U$ and $f:I\times U\rightarrow\mathbb{R}$ a continuous one-parameter family of $G$-invariant $C^2$-functionals. Let $F(\lambda,u):=(\nabla f_\lambda)(u)$ and assume that $F(\lambda,0)=0$ for all $\lambda\in I$. Suppose that there is an orthogonal decomposition $H=X\oplus Y$, where $X$ is $G$-invariant and of finite dimension, and such that for 

\[F(\lambda,u)=(F_1(\lambda,x,y),F_2(\lambda,x,y))\in X\oplus Y,\quad u=(x,y)\in X\oplus Y,\]
we have that $(D_y F_2)(\lambda,0,0):Y\rightarrow Y$ is invertible for all $\lambda\in I$. Then:

\begin{enumerate}
 \item[(i)] There are an open ball $B_X=B(0,\delta)\subset X$ and a unique continuous family of equivariant $C^1$-maps $\eta:I\times B_X\rightarrow Y$ such that $\eta(\lambda,0)=0$ for all $\lambda\in I$, and 
 
 \begin{align}\label{implicit}
 F_2(\lambda,x,\eta(\lambda,x))=0,\quad (\lambda,x)\in I\times B_X.
 \end{align}
 
 \item[(ii)] Let the family of functionals $\overline{f}:I\times B_X\rightarrow\mathbb{R}$ and the map $\overline{F}:I\times B_X\rightarrow X$ be defined by
 
 \[\overline{f}(\lambda,x)=f(\lambda,x,\eta(\lambda,x)),\qquad \overline{F}(\lambda,x)=F_1(\lambda,x,\eta(\lambda,x)).\]
 Then $\overline{f}$ is a continuous family of $G$-invariant $C^2$-functionals on $B_X$ and $$\nabla\overline{f}(\lambda,x)=\overline{F}(\lambda,x),\qquad (\lambda,x)\in I\times B_X,$$
 which is a $G$-equivariant map.
\end{enumerate}
\end{lemma}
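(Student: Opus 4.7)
The plan is to reduce everything to the classical parametrised implicit function theorem and then extract equivariance from the uniqueness clause. Concretely, under the standing assumptions, the map $F_2 : I \times B_X \times B_Y \to Y$ is continuous, and for each fixed $\lambda$ it is $C^1$ in the second and third variables with derivatives depending continuously on $\lambda$ (since $f_\lambda \in C^2$ with derivatives continuous in $\lambda$, and $F_i$ is the composition of $\nabla f_\lambda$ with orthogonal projections onto the $G$-invariant summands $X$ and $Y$). Since $(D_y F_2)(\lambda,0,0)$ is invertible for every $\lambda$ and $I$ is compact, the usual contraction-mapping / Newton-type proof of the implicit function theorem produces, after shrinking $\delta$, a unique continuous map $\eta : I \times B_X \to Y$ with $\eta(\lambda,0)=0$ and $F_2(\lambda,x,\eta(\lambda,x))=0$ on $I \times B_X$; moreover $\eta(\lambda,\cdot)$ is $C^1$ on $B_X$ with derivative depending continuously on $\lambda$. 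This is precisely the content of the non-equivariant versions in the cited references, so there is nothing novel here.

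The equivariance of $\eta$ is the only genuinely new step, and it is immediate from uniqueness. Because $f$ is $G$-invariant, $F=\nabla f$ is $G$-equivariant; because $X$ is $G$-invariant and $G$ acts orthogonally, $Y=X^\perp$ is $G$-invariant as well, and the equivariance splits into $F_i(\lambda,gx,gy)=gF_i(\lambda,x,y)$ for $i=1,2$. Hence for any $g\in G$ and $(\lambda,x)\in I\times B_X$,
\[
F_2(\lambda,gx,g\eta(\lambda,x)) \;=\; g\,F_2(\lambda,x,\eta(\lambda,x)) \;=\; 0,
\]
and $g\eta(\lambda,x)$ lies in (a possibly shrunken) $B_Y$; since $x\mapsto gx$ preserves $B_X$ (again by orthogonality of the action), the uniqueness part of the implicit function theorem forces $\eta(\lambda,gx)=g\eta(\lambda,x)$. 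Thus $\eta(\lambda,\cdot)$ is $G$-equivariant for each $\lambda$.

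For part (ii), $\overline{f}$ is continuous on $I\times B_X$ and $C^2$ in $x$ by composition, and $G$-invariance follows from $\overline{f}(\lambda,gx)=f(\lambda,gx,g\eta(\lambda,x))=f(\lambda,x,\eta(\lambda,x))=\overline{f}(\lambda,x)$. To compute the gradient I would apply the chain rule: for $v\in X$,
\[
(D_x\overline{f})(\lambda,x)v \;=\; (D_xf)(\lambda,x,\eta(\lambda,x))v + (D_yf)(\lambda,x,\eta(\lambda,x))(D_x\eta)(\lambda,x)v.
\]
The second summand vanishes because the defining identity $F_2(\lambda,x,\eta(\lambda,x))=0$ says exactly that the $Y$-component of $\nabla f_\lambda$, i.e.\ the Riesz representative of $D_yf(\lambda,x,\eta(\lambda,x))$, is zero. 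Dualising via Riesz and using that $X$ is orthogonally complemented in $H$, we conclude $\nabla\overline{f}(\lambda,x)=F_1(\lambda,x,\eta(\lambda,x))=\overline{F}(\lambda,x)$. Equivariance of $\overline{F}$ is then automatic: $\overline{F}(\lambda,gx)=F_1(\lambda,gx,g\eta(\lambda,x))=gF_1(\lambda,x,\eta(\lambda,x))=g\overline{F}(\lambda,x)$.

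The only non-routine ingredient is verifying that the classical implicit function theorem really delivers joint continuity in $(\lambda,x)$ of both $\eta$ and $D_x\eta$ under the stated continuity hypotheses on $f$; this is the point at which one typically invokes compactness of $I$ to obtain uniform bounds on $\|(D_yF_2)(\lambda,0,0)^{-1}\|$ and a uniform Lipschitz constant for $D_yF_2$ near $(0,0)$, shrinking $B_X$ once so that the contraction argument works simultaneously for all $\lambda$. Everything else—equivariance, the gradient identity, $C^2$-regularity of $\overline{f}$ in $x$—then falls out without additional work.
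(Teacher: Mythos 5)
The paper does not actually prove this lemma; it is quoted from the authors' earlier work \cite[Lem.\ 3.7]{MJN21}, which is described as the equivariant version of the non-equivariant Lyapunov--Schmidt lemmas in \cite{SFLPejsachowiczI} and \cite{BifJac}. Your argument---a uniform contraction over the compact interval $I$ for the parametrised implicit function theorem, equivariance of $\eta$ extracted from pointwise uniqueness of the implicitly defined solution (which applies because $G$ acts orthogonally, so balls around $0$ in $X$ and $Y$ are $G$-invariant), and the gradient identity from the chain rule with the cross term $D_yf\cdot D_x\eta$ annihilated by $F_2(\lambda,x,\eta(\lambda,x))=0$---is precisely the expected route and agrees in substance with those references.

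One point should be stated more carefully. You claim $\overline f$ is $C^2$ in $x$ ``by composition,'' but composing $f$ ($C^2$) with $\eta$ (only $C^1$) gives a priori only $C^1$. The correct justification is the identity you derive, $\nabla\overline f(\lambda,\cdot)=\overline F(\lambda,\cdot)=F_1(\lambda,\cdot,\eta(\lambda,\cdot))$: since $f$ is $C^2$, the map $F_1$ is $C^1$ in $(x,y)$, so $\overline F(\lambda,\cdot)$ is a composition of $C^1$-maps and hence $C^1$, which is exactly the statement that $\overline f(\lambda,\cdot)$ is $C^2$. Your proof already contains all the needed facts; just route the $C^2$-regularity through $\overline F$ rather than through a false ``$C^2\circ C^1$ is $C^2$'' shortcut.
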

\noindent
We now set $X=H_n$, $Y=H^\perp_n$ and consider the splitting $F=(F^n_1,F^n_2)$, where

\[F^n_1(\lambda,u,v)=P_n F(\lambda,u,v),\qquad F^n_2(\lambda,u,v)=(I_H-P_n) F(\lambda,u,v).\]
As $D_vF^n_2(\lambda,0,0)=(I_H-P_n)L_\lambda\mid_{H^\perp_n}:H^\perp_n\rightarrow H^\perp_n$ is an isomorphism for $n\geq n_0$ by Lemma \ref{bif-lemma-isomorphisms}, we obtain from the previous lemma a family of $G$-invariant functionals $\overline{f}:I\times B_{H_n}\rightarrow\mathbb{R}$ for some ball $B_{H_n}\subset H_n$ such that every bifurcation point of critical points of $\overline{f}$ is a bifurcation point of $f$. Consequently, our aim is now to show that $\overline{f}$ has a bifurcation of critical points from the trivial branch if \eqref{eqref-finreduction} is non-trivial in $RO(G)$. 

\begin{prop}
For the Hessians $\overline{L}^n_\lambda$ of the $G$-invariant functionals $\overline{f}_\lambda$ at $0\in H_n$, there exists $n_1\geq n_0$ such that $\overline{L}^n_\lambda$ is invertible and

\[[E^-(\overline{L}^n_\lambda)]=[E^-(L^n_\lambda)],\quad \lambda=0,1,
\]
for $n\geq n_1$.
\end{prop}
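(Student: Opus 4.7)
The plan is to compute $\overline{L}^n_\lambda$ explicitly via the chain rule and show that it differs from $L^n_\lambda$ by a term whose operator norm tends to zero as $n\to\infty$, uniformly in $\lambda\in I$. Then a convex-combination homotopy inside the finite-dimensional $G$-representation $H_n$ will connect $L^n_\lambda$ to $\overline{L}^n_\lambda$ through $G$-equivariant selfadjoint invertibles, forcing their negative spectral subspaces to be isomorphic as $G$-representations.

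First I would differentiate $\overline{f}_\lambda(x)=f_\lambda(x+\eta(\lambda,x))$ twice at $x=0$. Using $\eta(\lambda,0)=0$ together with the relation
\[
D_x\eta(\lambda,0)=-A_\lambda^{-1}B_\lambda^{\ast},\qquad B_\lambda:=P_nL_\lambda\big|_{H^\perp_n},\quad A_\lambda:=(I_H-P_n)L_\lambda(I_H-P_n)\big|_{H^\perp_n},
\]
obtained from differentiating $F_2(\lambda,x,\eta(\lambda,x))=0$, the Schur-complement/Lyapunov--Schmidt formula
\[
\overline{L}^n_\lambda=L^n_\lambda-B_\lambda A_\lambda^{-1}B_\lambda^{\ast}
\]
drops out as an identity of $G$-equivariant selfadjoint operators on $H_n$. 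The crucial observation is that $B_\lambda=P_nK_\lambda(I_H-P_n)\big|_{H^\perp_n}$, since $Q$ commutes with $P_n$ and preserves $H^\perp_n$. By continuity of $\{K_\lambda\}_{\lambda\in I}$ the set $\{K_\lambda u:\lambda\in I,\,\|u\|\leq 1\}$ is relatively compact in $H$, so $\|B_\lambda\|\to 0$ uniformly in $\lambda$. Combined with the bound $\|A_\lambda^{-1}\|\leq 2$ that was already derived in the proof of Lemma~\ref{bif-lemma-isomorphisms}(i), this yields $\|\overline{L}^n_\lambda-L^n_\lambda\|\leq 2\|B_\lambda\|^2\to 0$ uniformly in $\lambda$.

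Next I would establish that $L^n_\lambda$ itself is invertible for $\lambda=0,1$ and $n$ large, with an inverse norm bound independent of $n$. Setting $c:=\min(\|L_0^{-1}\|^{-1},\|L_1^{-1}\|^{-1})>0$ and using once more $[P_n,Q]=0$, for $u\in H_n$ one has $(I_H-P_n)L_\lambda u=(I_H-P_n)K_\lambda u$, so that
\[
\|L^n_\lambda u\|=\|P_nL_\lambda u\|\geq\|L_\lambda u\|-\|(I_H-P_n)K_\lambda\|\,\|u\|\geq\tfrac{c}{2}\|u\|
\]
for $n$ sufficiently large. Choosing $n_1\geq n_0$ large enough that in addition $\|\overline{L}^n_\lambda-L^n_\lambda\|<c/4$ for $n\geq n_1$ and $\lambda=0,1$, the convex combination $T^n_{s,\lambda}:=(1-s)L^n_\lambda+s\overline{L}^n_\lambda$, $s\in[0,1]$, is a continuous $G$-equivariant path of selfadjoint invertible operators on the finite-dimensional $G$-representation $H_n$; in particular $\overline{L}^n_\lambda=T^n_{1,\lambda}$ is invertible.

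Finally, along such a path $0$ never enters the spectrum, so the spectral projection $\chi_{(-\infty,0)}(T^n_{s,\lambda})$ is $G$-equivariant and continuous in $s$. The orthogonal projection $E^-(L^n_\lambda)\to E^-(T^n_{s,\lambda})$ is therefore a $G$-equivariant isomorphism for every $s$, giving $[E^-(\overline{L}^n_\lambda)]=[E^-(L^n_\lambda)]\in RO(G)$ as desired. The main obstacle is the uniform lower bound on $L^n_\lambda$ at $\lambda=0,1$; this is what the commutation relation $[P_n,Q]=0$ established in the previous subsection is tailored to deliver, since without it the smallest singular value of $L^n_\lambda$ could in principle decay faster than the perturbation $B_\lambda A_\lambda^{-1}B_\lambda^{\ast}$ shrinks.
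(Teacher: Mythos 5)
Your proposal is correct and follows essentially the same route as the paper's proof: your Schur-complement term $-B_\lambda A_\lambda^{-1}B_\lambda^{\ast}$ is exactly the paper's $P_nL_\lambda A^n_\lambda$ with $A^n_\lambda=D_0\eta^n_\lambda$, the uniform decay estimate comes from the compactness of $K_\lambda$ in both cases, and your convex homotopy $T^n_{s,\lambda}$ coincides with the paper's path $M^n_{(s,\lambda)}$. The only small imprecision is in the final step: the orthogonal projection $E^-(L^n_\lambda)\to E^-(T^n_{s,\lambda})$ is an isomorphism only when the corresponding spectral projections differ by less than $1$ in norm, so one must partition $[0,1]$ into short subintervals and chain the resulting $G$-equivariant isomorphisms (as the paper does) rather than claiming the single projection works for every $s$ simultaneously.
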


\begin{proof}
Let $\eta^n_\lambda:B_{H_n}\rightarrow H^\perp_n$ be the continuous family of $C^1$-maps from Lemma \ref{lemma-implicit} for the splitting $H=H_n\oplus H^\perp_n$, and set $A^n_\lambda:=D_0 \eta^n_\lambda$. Note that $A^n_\lambda$ is $G$-equivariant. By differentiating \eqref{implicit} implicitly, we obtain

\[A^n_\lambda=-(D_v F^n_{2}(\lambda,0,0))^{-1} D_u F^n_{2}(\lambda,0,0)=-((I_H-P_n)L_\lambda\mid_{H^\perp_n})^{-1}(I_H-P_n)L_{\lambda}\mid_{H_n}.\]
In the first part of the proof of Lemma \ref{bif-lemma-isomorphisms} we obtained

\[\|(I_H-P_n)L_\lambda u\|\geq \frac{1}{2}\|u\|,\quad u\in H^\perp_n,\, n\geq n_0,\]
which shows that

\[\|((I_H-P_n)L_\lambda\mid_{H^\perp_n})^{-1}\|\leq 2,\quad n\geq n_0,\, \lambda=0,1.\]
Using once again that $L_\lambda=Q+K_\lambda$ and 

\begin{align}\label{cpctto0}
\|(I_H-P_n)K_\lambda\|\rightarrow 0,\quad n\rightarrow\infty,
\end{align}
by the compactness of $K_\lambda$, this yields

\begin{align}\label{Ato0}
\|A^n_\lambda\|\leq 2 \|(I_H-P_n)(Q+K_\lambda)\mid_{H_n}\|\leq 2 \|(I_H-P_n)K_\lambda\|\rightarrow 0,\quad n\rightarrow\infty,
\end{align}
which we note for later reference.\\
We now consider $\overline{L}^n_\lambda$ and note at first that

\[\overline{L}^n_\lambda=P_nL_\lambda(I_{H_n}+A^n_\lambda)=L^n_{\lambda}+P_nL_\lambda A^n_\lambda.\]
Let us introduce two paths $\{M^n_{(s,\lambda)}\}_{s\in I}$, $\lambda=0,1$, of $G$-equivariant selfadjoint operators on $H_n$ by

\[M^n_{(s,\lambda)}=L^n_{\lambda}+sP_nL_\lambda A^n_\lambda.\]
We now aim to find $n_1\in\mathbb{N}$ such that $M^n_{(s,0)}$ and $M^n_{(s,1)}$ are invertible for all $s\in I$ and $n\geq n_1$.\\
We first note that there is $k_1\in\mathbb{N}$ and a constant $C>0$ such that for $\lambda=0,1$ and all $n\geq k_1$

\begin{align}\label{eqref-invest}
\|L^n_\lambda u\|=\|P_nL_\lambda u\|\geq C\|u\|,\quad u\in H_n.
\end{align}
Indeed, as $L_\lambda$ is invertible for $\lambda=0,1$, there is a constant $C>0$ such that

\[\|L_\lambda u\|\geq 2C\|u\|,\quad u\in H,\, \lambda=0,1.\]
Now by direct computation

\[P_nL_\lambda u=L_\lambda u-(I_H-P_n)K_\lambda u,\quad u\in H_n,\]
and \eqref{cpctto0} implies that there is $k_1\in\mathbb{N}$ such that for $n\geq k_1$

\[\|(I_H-P_n)K_\lambda u\|\leq C\|u\|,\quad u\in H,\]
which shows \eqref{eqref-invest}. Finally, by \eqref{Ato0} there is $k_2\in\mathbb{N}$ such that $\|L_\lambda\|\|A^n_\lambda\|\leq C$ for all $n\geq k_2$, $\lambda=0,1$. Consequently, if $n\geq n_1:=\max\{k_1,k_2\}$,

\begin{align*}
\|M^n_{(s,\lambda)}u\|\geq \|L^n_\lambda u\|-s\|P_nL_\lambda A^n_\lambda u\|\geq 2C\|u\|-\|L_\lambda\|\|A^n_\lambda\| \|u\|\geq C\|u\|
\end{align*}
for $\lambda=0,1$ and $0\leq s\leq 1$. Thus $M^n_{(s,\lambda)}:H_n\rightarrow H_n$ is injective and hence invertible on the finite dimensional space $H_n$. Note that the proposition is shown if we prove that

\begin{align}\label{eqreffinal}
[E^-(M^n_{(0,\lambda)})]=[E^-(M^n_{(1,\lambda)})],\quad n\geq n_1,
\end{align}
for $\lambda=0,1$. As $M^n_{(s,\lambda)}$ is invertible for all $s\in I$, the maps $[0,1]\ni s\mapsto\chi_{(-\infty,0)}(M^n_{(s,\lambda)})\in\mathcal{L}(H_n)$ are continuous. Thus there is a partition $0=s_0\leq s_1\leq\ldots\leq s_k=1$ such that 

\begin{align}\label{eqreffinalII}
\|\chi_{(-\infty,0)}(M^n_{(s_j,\lambda)})-\chi_{(-\infty,0)}(M^n_{(s_{j-1},\lambda)})\|<1.
\end{align}
Moreover, these projections are $G$-equivariant as their images are invariant and $G$ acts orthogonally. We now shorten our notation by setting $P:=\chi_{(-\infty,0)}(M^n_{(s_j,\lambda)})$, $Q:=\chi_{(-\infty,0)}(M^n_{(s_{j-1},\lambda)})$, and we claim that $\im(P)$ and $\im(Q)$ are isomorphic as $G$-representations. To prove this, we first note that the $G$-equivariant map $U:=PQ+(I_H-P)(I_H-Q)$ maps $\im(P)$ into $\im(Q)$. Moreover, a direct computation yields

\[(QP+(I_H-Q)(I_H-P))U=I_H-(P-Q)^2.\]
As $\|P-Q\|<1$ by \eqref{eqreffinalII}, the right hand side is an isomorphism and consequently $U$ is injective. Thus $U\mid_{\im(P)}:\im(P)\rightarrow\im(Q)$ is a $G$-equivariant isomorphism and so $[E^-(M^n_{(s_j,\lambda)})]=[E^-(M^n_{(s_{j-1},\lambda)})]$ for $j=1,\ldots,k$. Thus \eqref{eqreffinal} is shown, which eventually finishes the proof of the proposition. 
\end{proof}
\noindent
In conclusion, by \eqref{eqref-finreduction} and the previous proposition, we have reduced Theorem \ref{thm:main} to finite dimensions, i.e., we only need to prove it under the additional assumption that $\dim(H)<\infty$.


\subsection{Equivariant Conley Index and End of the Proof}
The aim of this final step of the proof is to show Theorem \ref{thm:main} under the additional assumption that $\dim(H)<\infty$. The proof is based on the equivariant Conley index, for which we mainly follow Bartsch's monograph \cite{BartschBook}.\\
Let $\phi_\lambda:\mathbb{R}\times H\rightarrow H$ be the flow of the equation

\begin{align}\label{conleyequ}
u'(t)=-(\nabla f_\lambda)(u(t))
\end{align} 
and note that its stationary solutions are the critical points of $f_\lambda$. Here we assume without loss of generality that the flow is global, which can be achieved by multiplying $f_\lambda$ by a smooth cut-off function in a neighbourhood of $0\in H$ and this does not affect the existence of bifurcation of critical points from $0\in H$. Note that $\varphi_\lambda(t,\cdot):H\rightarrow H$ is equivariant.\\
For a $G$-invariant subset $U\subset H$ we denote by

\[\inv(U,\varphi_\lambda)=\{u\in H:\, \varphi_\lambda(t,u)\in U\,\text{ for all } t\in\mathbb{R}\}\] 
the maximal (flow-)invariant subset of $U$, which clearly is $G$-invariant as well. A compact invariant set $S\subset H$ is called isolated if there is a compact $G$-invariant neighbourhood $U$ of $S$ such that $S=\inv(U,\varphi_\lambda)$ and $S \subset \interior U$. In this case $U$ is called an isolating neighbourhood of $S$. If $S\subset H$ is an isolated invariant set, then a pair $(N_1,N_0)$ of compact $G$-invariant subsets $N_0\subset N_1$ is called a $G$-index pair for $S$ if

\begin{itemize}
 \item $\overline{N_1\setminus N_0}$ is an isolating neighbourhood of $S$,
 \item $N_0$ is positively invariant with respect to $N_1$, which means that if $u\in N_0$ and $\varphi_\lambda(t,u)\in N_1$ for all $0\leq t\leq t'$, then $\varphi_\lambda(t,u)\in N_0$ for all $0\leq t\leq t'$,
 \item $N_0$ is an exit set for $N_1$, which means that if $u\in N_1$ and $\varphi_\lambda (t,u)\notin N_1$ for some $t>0$ then there is $t'\in[0,t)$ such that $\varphi_\lambda(t',u)\in N_0$ and $\varphi_\lambda([0,t'],u) \subset N_1$.
\end{itemize}  
If $U\subset H$ is an isolating neighbourhood for the flow $\varphi_\lambda$, then there is a $G$-index pair for $S = \inv U$, and if $(N_1,N_0)$, $(N'_1,N'_0)$ are two $G$-index pairs for $S$, then the quotient spaces $N_1/N_0$ and $N'_1/N'_0$ are homotopy equivalent by a base point preserving $G$-equivariant homotopy equivalence. Thus it is sensible to define the $G$-equivariant Conley index $\mathcal{C}(U,\varphi_\lambda)$ of $S$ as the based $G$-homotopy type $[N_1/N_0, [N_0]]$, where $(N_1,N_0)$ is any $G$-index pair for $S$. Finally, let us recall that by the continuation theorem for the Conley index $\mathcal{C}(U,\varphi_0)=\mathcal{C}(U,\varphi_1)$ if $U$ is an isolating neighbourhood for $\varphi_\lambda$ for all $\lambda\in I$.\\
Let us now come back to bifurcation of critical points and let us recall that $u\equiv 0\in H$ is a stationary solution of \eqref{conleyequ} for all $\lambda\in I$. Suppose that there is no bifurcation point. Since any isolated critical point is an isolated invariant set, there exists $\epsilon > 0$ such that $U = D(0,\epsilon)$ is an isolating neighbourhood for all $\varphi_\lambda$, $\lambda \in I$. This implies that $\mathcal{C}(U,\varphi_0) = \mathcal{C}(U,\varphi_1)$. On the other hand, we know that $[E^-(L_0)]\neq [E^-(L_1)]$, i.e., these spaces are non-isomorphic $G$-representations. As $G$ is nice, this implies that the quotients $D_0/\partial D_0$ and $D_1/\partial D_1$ are not $G$-homotopic, where $D_\lambda$ denotes the unit disc of $E^-(L_\lambda)$ for $\lambda=0,1$. In our case $\mathcal{C}(U,\varphi_\lambda) = [D_{\lambda}/\partial D_\lambda, [\partial D_\lambda]]$ for $\lambda = 0,1$. This contradicts the equality $\mathcal{C}(U,\varphi_0) = \mathcal{C}(U,\varphi_1)$, and consequently the assumption that there are no bifurcation points.

\section{Examples of Bifurcation of Critical Points of Even Functionals}\label{sect-simpEx}
The aim of this section is illustrate Theorem \ref{thm:main} by various examples of functionals $f_\lambda$ that are invariant under an action of $\mathbb{Z}_2$. As all real irreducible representations of $\mathbb{Z}_2$ are one dimensional, every real $k$-dimensional representation is up to isomorphism a $k\times k$ diagonal matrix of the form $\diag(1,\ldots,1,-1,\ldots,-1)$. Thus we obtain an isomorphism $\phi:RO(\mathbb{Z}_2)\rightarrow\mathbb{Z}\oplus\mathbb{Z}$ of abelian groups by setting
 
 \begin{align}\label{phi}
 \phi([E]-[F])=(\dim(E)-\dim(F),\dim(E^G)-\dim(F^G)),
 \end{align}
 where $E^G\subset E$ and $F^G\subset F$ denote the spaces of fixed points under the group action.

\begin{lemma}
Let $H$ be a real separable Hilbert space on which $G=\mathbb{Z}_2$ acts orthogonally. Then for every path $L=\{L_\lambda\}_{\lambda\in I}$ in $\mathcal{FS}(H)^G$

\begin{align}\label{phi2}
\phi(\sfl_G(L))=(\sfl(L),\sfl(L\mid_{H^G}))\in\mathbb{Z}\oplus\mathbb{Z},
\end{align}
where $H^G$ is the fixed point set of the action of $G$. Moreover,

\begin{align}\label{phi3}
\sfl(L)=\sfl(L\mid_{H^G})+\sfl(L\mid_{(H^G)^\perp}).
\end{align}
\end{lemma}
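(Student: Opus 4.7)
My strategy is to reduce both identities to the additivity property \eqref{sflAdditivity} of the $G$-equivariant spectral flow applied to the canonical orthogonal decomposition $H = H^G \oplus (H^G)^\perp$, and then read off the two components of $\phi$ separately on each summand.

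First I would verify that $H = H^G \oplus (H^G)^\perp$ is an $L_\lambda$-invariant $G$-invariant orthogonal decomposition. Orthogonality of the $G$-action makes $(H^G)^\perp$ a $G$-invariant subspace (for $u \in (H^G)^\perp$ and $v \in H^G$ we have $\langle gu, v\rangle = \langle u, g^{-1}v\rangle = \langle u, v\rangle = 0$), and $G$-equivariance of $L_\lambda$ together with the invariance of $H^G$ forces $L_\lambda$ to preserve both summands. Hence $L_\lambda = L_\lambda|_{H^G} \oplus L_\lambda|_{(H^G)^\perp}$ as selfadjoint operators, and since the kernel and image of such a direct sum decompose accordingly, each restriction is Fredholm and selfadjoint, i.e.\ lies in $\mathcal{FS}(H^G)^G$ respectively $\mathcal{FS}((H^G)^\perp)^G$. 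Additivity \eqref{sflAdditivity} then yields
\[
\sfl_G(L) \;=\; \sfl_G(L|_{H^G}) + \sfl_G(L|_{(H^G)^\perp}) \;\in\; RO(G).
\]

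Now I would compute $\phi$ of each summand. On $H^G$ the $G$-action is trivial, so every $G$-invariant subspace appearing in the partition-based formula \eqref{sfl-equiv} is a sum of trivial representations; for any such $V$ we have $V^G = V$ and thus $\phi([V]-[W]) = (\dim V - \dim W, \dim V - \dim W)$. Combined with $F(\sfl_G(\cdot)) = \sfl(\cdot)$ from \eqref{forgetfull} (applied on $H^G$), this gives $\phi(\sfl_G(L|_{H^G})) = (\sfl(L|_{H^G}), \sfl(L|_{H^G}))$. On $(H^G)^\perp$, on the other hand, the fixed point subspace is $(H^G)^\perp \cap H^G = \{0\}$, so any $G$-invariant subspace $V \subset (H^G)^\perp$ satisfies $V^G = \{0\}$. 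Consequently $\phi([V]-[W]) = (\dim V - \dim W, 0)$ for all such $V, W$, and hence $\phi(\sfl_G(L|_{(H^G)^\perp})) = (\sfl(L|_{(H^G)^\perp}), 0)$.

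Summing these two equalities and comparing the first coordinate with $F(\sfl_G(L)) = \sfl(L)$ simultaneously yields both claims: the second coordinate gives $\sfl(L|_{H^G})$, which is \eqref{phi2}, and the first coordinate gives \eqref{phi3}, namely $\sfl(L) = \sfl(L|_{H^G}) + \sfl(L|_{(H^G)^\perp})$. (Alternatively, \eqref{phi3} is just ordinary additivity of the non-equivariant spectral flow applied to the same decomposition.) There is no substantive obstacle; the only care needed is in checking that the restrictions really lie in $\mathcal{FS}(\cdot)^G$, which rests on the orthogonality of the $G$-action and the closed-range/finite-kernel behaviour of direct sums of selfadjoint Fredholm operators.
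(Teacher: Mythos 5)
Your proposal is correct and rests on the same pillars as the paper's proof: the orthogonal $G$-invariant splitting $H=H^G\oplus(H^G)^\perp$, the fact that $L_\lambda$ restricts to a selfadjoint Fredholm operator on each summand, and the additivity property \eqref{sflAdditivity}. For \eqref{phi3} the two proofs coincide. For \eqref{phi2} you take a slightly different, but equivalent, route: you first apply equivariant additivity to write $\sfl_G(L)=\sfl_G(L|_{H^G})+\sfl_G(L|_{(H^G)^\perp})$, and then evaluate $\phi$ on each summand using that the $\mathbb{Z}_2$-action is trivial on $H^G$ (so $V^G=V$) and fixed-point-free on $(H^G)^\perp$ (so $V^G=\{0\}$). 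The paper instead reads the second component of $\phi(\sfl_G(L))$ directly off the partition formula \eqref{sfl-equiv}, using the identity $E(L_\lambda,[0,a])^G=H^G\cap E(L_\lambda,[0,a])=E(L_\lambda\mid_{H^G},[0,a])$. Both arguments are short and elementary; yours spares the return to the definition of $\sfl_G$, the paper's is marginally more direct. One small remark: where you assert that the restrictions lie in $\mathcal{FS}(\cdot)^G$ ``because the kernel and image of such a direct sum decompose accordingly'', the key point — which the paper spells out explicitly — is that $\im(L_\lambda\mid_{H^G})=\im(L_\lambda)\cap H^G$ is closed; your phrasing is correct but it is worth noting this is exactly what needs to be checked.
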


\begin{proof}
Note that $H^G$ reduces the operators $L_\lambda$ and thus we indeed obtain a path of selfadjoint operators $L\mid_{H^G}=\{L_\lambda\mid_{H^G}\}_{\lambda\in I}$ that all have finite dimensional kernels. Moreover, $\im(L_\lambda\mid_{H^G})=\im(L_\lambda\mid_{H^G\cap(\ker L_\lambda)^\perp})$ and the latter set is closed in $\im(L_\lambda)$ as $L_\lambda\mid_{(\ker L_\lambda)^\perp}:(\ker L_\lambda)^\perp\rightarrow\im(L_\lambda)$ is a homeomorphism. Consequently, $\im(L_\lambda\mid_{H^G})$ is closed in $H$ and thus in $H^G$. Therefore the operators $L_\lambda\mid_{H^G}$ are in $\mathcal{FS}(H^G)$ and so $\sfl(L\mid_{H^G})$ is defined. Likewise the restriction $L\mid_{(H^G)^\perp}$ to the invariant subspace $(H^G)^\perp$ is an element of $\mathcal{FS}((H^G)^\perp)$, and now \eqref{phi3} follows from \eqref{sflAdditivity}.\\
Finally, \eqref{phi2} is a simple consequence of \eqref{sfl}, \eqref{sfl-equiv} and \eqref{phi} when noting that 

\[E(L_\lambda,[0,a])^G=H^G\cap E(L_\lambda,[0,a])=E(L_\lambda\mid_{H^G},[0,a])\]
for any $a>0$.       
\end{proof}
\noindent
In the examples below we need a common method to compute the classical spectral flow \eqref{sfl} that we now want to recap (see \cite{Robbin-Salamon}, \cite{Homoclinics}). Let $L=\{L_\lambda\}_{\lambda\in I}$ be a path in $\mathcal{FS}(H)$ that is continuously differentiable in the parameter $\lambda$. We call $\lambda\in I$ a crossing if $\ker(L_\lambda)\neq\{0\}$, and the associated crossing form is the quadratic form defined by

\begin{align*}
\Gamma(L,\lambda)[u]=\langle \dot{L}_\lambda u,u\rangle,\quad u\in\ker(L_\lambda),
\end{align*}
where $\dot{L}_\lambda$ denotes the derivative with respect to $\lambda$. A crossing $\lambda\in I$ is regular if $\Gamma(L,\lambda)$ is non-degenerate. Regular crossings are isolated and thus every path $L$ parametrised by a compact interval $I$ can only have finitely many of them. Finally, if $L=\{L_\lambda\}_{\lambda\in I}$ has only regular crossings, then the spectral flow \eqref{sfl} is given by

\begin{align}\label{crossing-form}
\sfl(L)=\sum_{\lambda\in I}\sgn(\Gamma(L,\lambda)),
\end{align}
where $\sgn(\Gamma(L,\lambda))$ denotes the signature of the quadratic form $\Gamma(L,\lambda)$.


\subsection{A system of elliptic PDEs}
The aim of this section is to consider simple settings in which the classical spectral flow fails to show the existence of a bifurcation, whereas the $G$-equivariant spectral flow succeeds. Let us consider on a bounded smooth domain $\Omega\subset\mathbb{R}^N$ indefinite elliptic systems of the type

\begin{align}
			\label{eq:pdeGeneral2}
			\left\{
			\begin{array}{rl}
				-\Delta u(x)&=\nabla_{u} F(\lambda,x,u(x),v(x))\ \hspace*{0.25cm} \mathrm{in}\ \Omega\\		
				\Delta v(x)&=\nabla_{v} F(\lambda,x,u(x),v(x))\ \hspace*{0.25cm} \mathrm{in}\ \Omega\\	
				&u(x)=v(x)=0\ \hspace*{1.3cm}  \mathrm{on}\ \partial \Omega,
			\end{array}
			\right.
		\end{align}
where $F:I\times\overline{\Omega}\times\mathbb{R}^2\rightarrow\mathbb{R}$ is a continuous function such that the partial derivatives $\nabla_{u} F$, $\nabla_{v} F$ exist and depend continuously on $(\lambda,x,u,v)\in I\times\overline{\Omega}\times\mathbb{R}^2$. Moreover, we assume that $\nabla F(\lambda,x,0)=0$ so that the constant function $(u,v)=(0,0)$ is a solution of \eqref{eq:pdeGeneral2} for all $\lambda\in I$. It is well known that, under a common growth condition on the nonlinearity $F$, the solutions of \eqref{eq:pdeGeneral2} are the critical points of the functional $f_\lambda:H^1_0(\Omega,\mathbb{R}^2)\rightarrow\mathbb{R}$ defined by

\begin{align}\label{examplefcritpoint}
f_\lambda(w)&=\frac{1}{2}\int_\Omega{\langle\nabla u(x),\nabla u(x)\rangle\,dx}-\frac{1}{2}\int_\Omega{\langle \nabla v(x),\nabla v(x)\rangle\,dx}+\int_\Omega{F(\lambda,x,u(x),v(x))\, dx},
\end{align} 
where $w=(u,v)\in H^1_0(\Omega,\mathbb{R}^2)$. The first derivative of $f_\lambda$ at $w=(u,v)\in H^1_0(\Omega,\mathbb{R}^2)$ is

\begin{align*}
D_wf_\lambda(\overline{w})&=\int_\Omega{\langle\nabla u(x),\nabla \overline{u}(x)\rangle\,dx}-\int_\Omega{\langle \nabla v(x),\nabla\overline{v}(x)\rangle\,dx}\\
&+\int_\Omega{\frac{\partial F}{\partial u}(\lambda,x,u(x),v(x))\,\overline{u}(x)\, dx}+\int_\Omega{\frac{\partial F}{\partial v}(\lambda,x,u(x),v(x))\,\overline{v}(x)\, dx},\quad \overline{w}=(\overline{u},\overline{v}),
\end{align*}
and thus the critical points of \eqref{examplefcritpoint} are the weak solutions of \eqref{eq:pdeGeneral2}. Finally, the second derivatives at the critical point $w=0\in H^1_0(\Omega,\mathbb{R}^2)$ are given by

\begin{align}\label{ex:Hessian}
\begin{split}
D^2_0f_\lambda(w_1,w_2)&=\int_\Omega{\langle\nabla u_1(x),\nabla u_2(x)\rangle\,dx}-\int_\Omega{\langle \nabla v_1(x),\nabla v_2(x)\rangle\,dx}\\
&+\int_\Omega{\langle S_\lambda(x)w_1,w_2\rangle \, dx},\quad w_1=(u_1,v_1), w_2=(u_2,v_2),
\end{split}
\end{align}
where $S_\lambda(x):= D^2_0F(\lambda,x,\cdot,\cdot)$ denotes the Hessian of $F_{(\lambda,x)}:\mathbb{R}^2\rightarrow\mathbb{R}$ at the critical point $0$. It is well known that the Riesz representations $L_\lambda$ of $D^2_0f_\lambda$ are Fredholm and their kernels are the solutions of the linear Dirichlet problems

\begin{align}
			\label{eq:pdeGeneral2LIN}
			\left\{
			\begin{array}{rl}
				\begin{pmatrix} -\Delta&0\\0&\Delta\end{pmatrix}\begin{pmatrix}u\\v\end{pmatrix} &= S_\lambda(x) \begin{pmatrix}u\\v\end{pmatrix}\ \hspace*{0.25cm} \mathrm{in}\ \Omega\\							
				u(x)=v(x) = 0&\ \hspace*{2.3cm}  \mathrm{on}\ \partial \Omega,
			\end{array}
			\right.
		\end{align}
\noindent
(see, e.g., \cite{AleIchIndef}). Thus if \eqref{eq:pdeGeneral2LIN} has only the trivial solution for $\lambda=0,1$ and if $\sfl(L)\neq 0$, then there is a bifurcation of \eqref{eq:pdeGeneral2}. Let us point out that the computation of the spectral flow might in general be difficult.\\
Our first example shall illustrate Theorem \ref{bif-thm-AFi} on the non-existence of bifurcation in case of a vanishing classical spectral flow. We consider \eqref{eq:pdeGeneral2} for $F(\lambda,x,u,v)=\frac{\lambda}{2}(u^2-v^2)+u^3v+v^3u$, i.e.,

\begin{align}
			\label{eq:pdeExAFi}
			\left\{
			\begin{array}{rl}
				-\Delta u(x) &= \lambda u(x)+3u(x)^2v(x)+v(x)^3\ \hspace*{0.25cm} \mathrm{in}\ \Omega\\		
				\Delta v(x) &= -\lambda v(x)+3v(x)^2u(x)+u(x)^3\ \hspace*{0.25cm} \mathrm{in}\ \Omega\\	
				u(x) &=v(x)= 0\ \hspace*{2.5cm}  \mathrm{on}\ \partial \Omega.
			\end{array}
			\right.
		\end{align}
Now $S_\lambda(x)=\diag(\lambda,-\lambda)$ is a diagonal matrix and it follows from \eqref{ex:Hessian} that the crossing forms are given by

\[\Gamma(L,\lambda)=\int_\Omega{u^2-v^2\,dx},\quad (u,v)\in\ker(L_\lambda).\]
The kernel of $L_\lambda$ is made by the solutions of the corresponding equation \eqref{eq:pdeGeneral2LIN}. Now in case of the existence of a solution for $S_\lambda(x)=\diag(\lambda,-\lambda)$ and some $\lambda\in I$, the solution space is two-dimensional and thus $\Gamma(L,\lambda)$ is non-degenerate, but its signature vanishes. Hence $\sfl(L)=0$ by \eqref{crossing-form} and Theorem \ref{thm-FPR} does not apply. If we now multiply the first equation of \eqref{eq:pdeExAFi} by $v$, the second by $u$, add those equations and integrate, we obtain from Green's identity

\[\int_\Omega{6u^2(x)v^2(x)+u^4(x)+v^4(x) dx}=0.\]
Thus $u=v=0$ and there is no bifurcation for \eqref{eq:pdeExAFi}.\\ 
We now consider \eqref{eq:pdeGeneral2} in the case that $F$ is even in $v$, i.e., 

\begin{align}\label{ex-evencond}
F(\lambda,x,u,-v)=F(\lambda,x,u,v)\quad \lambda\in I, x\in\Omega, (u,v)\in\mathbb{R}^2.
\end{align}
Then $S_\lambda$ is a diagonal matrix, $S_\lambda(x)=\diag(a_\lambda(x),b_\lambda(x))$, where $a_\lambda(x)$ and $b_\lambda(x)$ are the second partial derivatives of $F(\lambda,x,\cdot,\cdot)$ with respect to $u$ and $v$ at $(0,0)$. Note that multiplicity of solutions of \eqref{eq:pdeGeneral2} under the condition \eqref{ex-evencond} were studied by Bartsch and Clapp in \cite{Clapp}.  

\begin{prop}\label{prop-ExI}
If \eqref{ex-evencond} holds, \eqref{eq:pdeGeneral2LIN} has only the trivial solution for $\lambda=0,1$ and the boundary value problems

\begin{align}\label{eq-prop-ExI}
			\left\{
			\begin{array}{rl}
				-\Delta u(x) &= a_\lambda(x)u(x)\ \hspace*{0.25cm} \mathrm{in}\ \Omega\\			
				u(x) &= 0\ \hspace*{2.5cm}  \mathrm{on}\ \partial \Omega,
			\end{array}
			\right.
		\end{align}
have different Morse indices for $\lambda=0,1$, then there is a bifurcation of \eqref{eq:pdeGeneral2}.
\end{prop}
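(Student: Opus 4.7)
The plan is to apply Theorem \ref{thm:main} with $G=\mathbb{Z}_2$ acting orthogonally on $H=H^1_0(\Omega,\mathbb{R}^2)$ via $(u,v)\mapsto(u,-v)$. The evenness condition \eqref{ex-evencond} immediately implies that each $f_\lambda$ is $G$-invariant and hence, by the $G$-invariance, the Hessians $L_\lambda$ are $G$-equivariant. Since $L_\lambda$ is a compact perturbation of the selfadjoint Fredholm operator induced by $\diag(-\Delta,\Delta)$, it lies in $\mathcal{FS}^i(H)^G$, and the assumption that the linearised problem \eqref{eq:pdeGeneral2LIN} admits only the trivial solution for $\lambda=0,1$ ensures that $L_0$ and $L_1$ are invertible.

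Next I would exploit the fixed-point decomposition. The subspace $H^G=\{(u,0):u\in H^1_0(\Omega)\}$ is canonically identified with $H^1_0(\Omega)$, and from \eqref{ex:Hessian} together with the fact that $S_\lambda(x)=\diag(a_\lambda(x),b_\lambda(x))$ (a direct consequence of evenness), the restriction $L_\lambda|_{H^G}$ is the Riesz representative on $H^1_0(\Omega)$ of the bilinear form
\[
B_\lambda(u_1,u_2)=\int_\Omega\nabla u_1\cdot\nabla u_2\,dx+\int_\Omega a_\lambda(x) u_1 u_2\,dx.
\]
By the Rellich--Kondrachov embedding, multiplication by $a_\lambda$ yields a compact selfadjoint operator on $H^1_0(\Omega)$, so $L_\lambda|_{H^G}$ is a compact perturbation of the identity and therefore belongs to $\mathcal{FS}^+(H^G)$. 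The invertibility of $L_0|_{H^G}$ and $L_1|_{H^G}$ follows from that of $L_0$ and $L_1$, and the Morse index of $L_\lambda|_{H^G}$ is, by construction, exactly the Morse index of the scalar Dirichlet problem \eqref{eq-prop-ExI}.

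Then Proposition \ref{prop-sfldiffMorse} applied in the non-equivariant setting to $L|_{H^G}$ gives
\[
\sfl(L|_{H^G})=\mu_-(L_0|_{H^G})-\mu_-(L_1|_{H^G})\neq 0
\]
by the Morse-index hypothesis. The isomorphism $\phi$ of \eqref{phi} together with the identity \eqref{phi2} then shows
\[
\phi(\sfl_G(L))=(\sfl(L),\sfl(L|_{H^G}))\in\mathbb{Z}\oplus\mathbb{Z},
\]
whose second coordinate is non-zero. Consequently $\sfl_G(L)\neq 0$ in $RO(\mathbb{Z}_2)$, and Theorem \ref{thm:main} produces a bifurcation point of critical points of $f_\lambda$, which via the variational structure of $f_\lambda$ translates into a bifurcation of weak solutions of \eqref{eq:pdeGeneral2}.

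The only step requiring more than routine care is the explicit identification of the Morse index of $L_\lambda|_{H^G}$ with that of the Dirichlet problem \eqref{eq-prop-ExI}: one should check that the sign conventions in \eqref{eq:pdeGeneral2}, \eqref{examplefcritpoint}, and \eqref{ex:Hessian} line up so that the form $B_\lambda$ indeed has the Morse index of \eqref{eq-prop-ExI}. Everything else is a direct assembly of the equivariant spectral flow machinery established earlier in the paper.
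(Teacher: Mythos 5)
Your proposal is correct and follows essentially the same route as the paper's proof: restrict to the fixed-point subspace $H^G \cong H^1_0(\Omega)$, identify $L_\lambda|_{H^G}$ with the Riesz representative of the form involving $a_\lambda$, use that these operators lie in $\mathcal{FS}^+(H^G)$ so the spectral flow is the Morse index difference, and conclude via \eqref{phi2} and Theorem \ref{thm:main}. The only cosmetic difference is that you derive the Morse-index formula for $\sfl(L|_{H^G})$ from Proposition \ref{prop-sfldiffMorse} (with trivial $G$), whereas the paper cites external references, but this is the same fact.
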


\begin{proof}
We consider on $H^1_0(\Omega,\mathbb{R}^2)$ the $\mathbb{Z}_2$-action defined by $g\cdot(u,v)=(u,-v)$, where $g$ denotes the non-trivial element in $\mathbb{Z}_2$. Then \eqref{examplefcritpoint} is invariant under this action and by Theorem \ref{thm:main} we need to show $\sfl_G(L)\neq 0\in RO(\mathbb{Z}_2)$. Note that $L_0$, $L_1$ are invertible by the assumption on \eqref{eq:pdeGeneral2LIN}. For the spectral flow, we note that by \eqref{phi2} it is enough to show that the spectral flow of the restriction of $L$ to the fixed-point space $H^G=H^1_0(\Omega,\mathbb{R})\oplus\{0\}$ of the action is non-trivial. This restriction is determined by

\[\langle L_\lambda\mid_{H^G} u_1,u_2\rangle=\int_\Omega{\langle\nabla u_1(x),\nabla u_2(x)\rangle\,dx}+\int_\Omega{ a_\lambda(x)u_1(x)u_2(x) \, dx}\]
and we now need to show that the classical spectral flow \eqref{sfl} of $L\mid_{H^G}=\{L_\lambda\mid_{H^G}\}_{\lambda\in I}$ is non-trivial. The operators $L_\lambda\mid_{H^G}$ belong to $\mathcal{FS}^+(H^G)$, and for paths in this component of $\mathcal{FS}(H^G)$ the spectral flow \eqref{sfl} is the difference of the Morse-indices at the endpoints of the path, i.e.

\[\sfl(L\mid_{H^G})=\mu_-(L_0\mid_{H^G})-\mu_-(L_1\mid_{H^G}).\]
This latter fact can be found in \cite[Prop. 3.9]{SFLPejsachowiczI} and \cite[Lem. 4.3]{CompSfl}, where it was shown directly from the definition \eqref{sfl}. Finally, the Morse index $\mu_-(L_\lambda\mid_{H^G})$ is the number of negative eigenvalues including multiplicities of \eqref{eq-prop-ExI} (see, e.g., \cite[Lem. 2.5]{AleIchIndef}), which proves the proposition. 
\end{proof}
\noindent
Finally let us have another look at \eqref{eq:pdeExAFi}, where the spectral flow vanishes and there is no bifurcation. The previous proposition shows that even in case of a vanishing spectral flow there can still be bifurcation as long as a symmetry assumption as \eqref{ex-evencond} is required and thus the situation is not necessarily as bad as predicted by Theorem \ref{bif-thm-AFi}. For example, if we modify the function $F$ in \eqref{eq:pdeExAFi} to $F(\lambda,x,u,v)=\frac{\lambda}{2}(u^2-v^2)+R(\lambda,x,u,v)$ for some $R$ that satisfies \eqref{ex-evencond} as well as $D^2_0R(\lambda,x,\cdot,\cdot)=0$ for all $(\lambda,x)$, then the corresponding equation is

\begin{align*}
			\left\{
			\begin{array}{rl}
				-\Delta u(x) &= \lambda u(x)+\nabla_uR(\lambda,x,u(x),v(x))\ \hspace*{0.25cm} \mathrm{in}\ \Omega\\		
				\Delta v(x) &= -\lambda v(x)+\nabla_vR(\lambda,x,u(x),v(x))\ \hspace*{0.25cm} \mathrm{in}\ \Omega\\	
				u(x) &=v(x)= 0\ \hspace*{2.5cm}  \mathrm{on}\ \partial \Omega.
			\end{array}
			\right.
		\end{align*}
Now again $S_\lambda(x)=\diag(\lambda,-\lambda)$ and thus as before $\sfl(L)=0$. However, by Proposition \ref{prop-ExI} there is a bifurcation as long as 

\begin{align*}
			\left\{
			\begin{array}{rl}
				-\Delta u(x) &= \lambda u(x)\ \hspace*{0.25cm} \mathrm{in}\ \Omega\\			
				u(x) &= 0\ \hspace*{2.5cm}  \mathrm{on}\ \partial \Omega
			\end{array}
			\right.
		\end{align*}
has a non trivial solution for some $\lambda$, i.e., as long as $\lambda$ passes a Dirichlet eigenvalue of the domain $\Omega$.

\subsection{Homoclinics of Hamiltonian Systems}
Note that the operators $L_\lambda$ in the previous section are of the type $L_\lambda=T+K_\lambda$ for a fixed $T\in\mathcal{FS}(H)^G$ and compact operators $K_\lambda$. Thus for \eqref{eq:pdeGeneral2} the spectral flow of the corresponding Hessians $L_\lambda$ in \eqref{ex:Hessian} actually only depends on the endpoints $L_0$ and $L_1$ by Proposition \ref{prop-FixedPerturbation}. In particular, if $S_0(x)=S_1(x)$ for all $x\in\Omega$ in \eqref{ex:Hessian}, then $L$ is a closed path and thus $\sfl_G(L)=0$ as we explained below Proposition \ref{prop-FixedPerturbation}. The aim of this section is to construct a $G=\mathbb{Z}_2$-invariant family of functionals $f$ such that the Hessians $L$ are a loop in $\mathcal{FS}(H)^G$ having a non-vanishing $G$-equivariant spectral flow. Thus by Theorem \ref{thm:main} there is a bifurcation of critical points that could not have been found by any invariant that only depends on the endpoints $L_0$, $L_1$ of the path such as \cite{shortdegree}, \cite{BlaGoRy} and \cite{GoleRy}. Moreover, our example also has the feature that $\sfl(L)=0$ and consequently Theorem \ref{thm-FPR} fails as well.\\
Let $\mathcal{H}:I\times\mathbb{R}\times\mathbb{R}^{2n}\rightarrow\mathbb{R}$ be a smooth map and consider the Hamiltonian systems

\begin{equation}\label{Hamiltoniannonlin}
\left\{
\begin{aligned}
Ju'(t)+\nabla_u \mathcal{H}_\lambda(t,u(t))&=0,\quad t\in\mathbb{R}\\
\lim_{t\rightarrow\pm\infty}u(t)&=0,
\end{aligned}
\right.
\end{equation}
where $\lambda\in I$ and

\begin{align}\label{J}
J=\begin{pmatrix}
0&-I_n\\
I_n&0
\end{pmatrix}
\end{align}
is the standard symplectic matrix. In what follows, we assume that $\mathcal{H}$ is of the form

\begin{align}\label{Hgrowth}
\mathcal{H}_\lambda(t,u)=\frac{1}{2}\langle A(\lambda,t)u,u\rangle+R(\lambda,t,u),
\end{align}
where $A:I\times\mathbb{R}\rightarrow\mathcal{L}(\mathbb{R}^{2n})$ is a family of symmetric matrices, $R(\lambda,t,u)$ vanishes up to second order at $u=0$, and there are $p>0$, $C\geq 0$ and $r\in H^1(\mathbb{R},\mathbb{R})$ such that

\[|D^2_uR(\lambda,t,u)|\leq r(t)+C|u|^p.\]
Moreover, we suppose that $A_\lambda:=A(\lambda,\cdot):\mathbb{R}\rightarrow\mathcal{L}(\mathbb{R}^{2n})$ converges uniformly in $\lambda$ to families

\begin{align}\label{limits}
A_\lambda(+\infty):=\lim_{t\rightarrow\infty}A_\lambda(t),\quad A_\lambda(-\infty):=\lim_{t\rightarrow-\infty}A_\lambda(t),\quad\lambda\in I,
\end{align}
and that the matrices $JA_\lambda(\pm\infty)$ are hyperbolic, i.e. they have no eigenvalues on the imaginary axis. Note that by \eqref{Hgrowth}, $\nabla_u \mathcal{H}_\lambda(t,0)=0$ for all $(\lambda,t)\in I\times\mathbb{R}$, so that $u\equiv 0$ is a solution of \eqref{Hamiltoniannonlin} for all $\lambda\in I$.\\
Let us now briefly recall the variational formulation of the equations \eqref{Hamiltoniannonlin} from \cite[\S 4]{Jacobo}. The bilinear form $b(u,v)=\langle J u',v\rangle_{L^2(\mathbb{R},\mathbb{R}^{2n})}$, $u,v\in H^1(\mathbb{R},\mathbb{R}^{2n})$, extends to a bounded form on the well known fractional Sobolev space $H^\frac{1}{2}(\mathbb{R},\mathbb{R}^{2n})$. Under the assumption \eqref{Hgrowth}, the map $f:I\times H^\frac{1}{2}(\mathbb{R},\mathbb{R}^{2n})\rightarrow\mathbb{R}$ given by

\[f_\lambda:H^\frac{1}{2}(\mathbb{R},\mathbb{R}^{2n})\rightarrow\mathbb{R},\quad f_\lambda(u)=\frac{1}{2}b(u,u)+\frac{1}{2}\int^\infty_{-\infty}{\langle A(\lambda,t)u(t),u(t)\rangle\,dt}+\int^\infty_{-\infty}{R(\lambda,t,u(t))\,dt}\]
is $C^2$. Moreover, it was shown in \cite{Jacobo} that its critical points are the classical solutions of \eqref{Hamiltoniannonlin} and each sequence of critical points that converges to a bifurcation point actually converges in $C^1(\mathbb{R},\mathbb{R}^{2n})$. Finally,
the second derivative of $f_\lambda$ at the critical point $0\in H^\frac{1}{2}(\mathbb{R},\mathbb{R}^{2n})$ is given by

\begin{align}\label{L}
D^2_0f_\lambda(u,v)=b(u,v)+\int^\infty_{-\infty}{\langle A(\lambda,t)u(t),v(t)\rangle\,dt}
\end{align}
and, by using the hyperbolicity of $J A_\lambda(\pm\infty)$, it can be shown that the corresponding Riesz representations $L_\lambda:H^\frac{1}{2}(\mathbb{R},\mathbb{R}^{2n})\rightarrow H^\frac{1}{2}(\mathbb{R},\mathbb{R}^{2n})$ are Fredholm. Consequently, the operators $L_\lambda$ are selfadjoint Fredholm operators, and it follows by elliptic regularity that the kernel of $L_\lambda$ consists of the classical solutions of the linear differential equation

\begin{equation}\label{Hamiltonianlin}
\left\{
\begin{aligned}
Ju'(t)+A(\lambda,t)u(t)&=0,\quad t\in\mathbb{R}\\
\lim_{t\rightarrow\pm\infty}u(t)&=0.
\end{aligned}
\right.
\end{equation}
The stable and the unstable subspaces of \eqref{Hamiltonianlin} are

\begin{align*}
E^s(\lambda,0)&=\{u(0)\in\mathbb{R}^{2n}:\,Ju'(t)+A(\lambda,t)u(t)=0,\, t\in\mathbb{R}; u(t)\rightarrow 0, t\rightarrow\infty\},\\
E^u(\lambda,0)&=\{u(0)\in\mathbb{R}^{2n}:\,Ju'(t)+A(\lambda,t)u(t)=0,\, t\in\mathbb{R}; u(t)\rightarrow 0, t\rightarrow-\infty\},
\end{align*}
and it is clear that \eqref{Hamiltonianlin} has a non-trivial solution if and only if $E^s(\lambda,0)$ and $E^u(\lambda,0)$ intersect non-trivially.\\
%
Denote by $g$ the non-trivial element of $G = \mathbb{Z}_2$. We set

\begin{align}
\rho(g)=\begin{pmatrix}
1&0&0&0\\
0&-1&0&0\\
0&0&1&0\\
0&0&0&-1
\end{pmatrix}
\end{align}
and consider Hamitonian systems in $\mathbb{R}^{4}$ (c.f. \cite{ArioliSzulkin}, \cite{Clapp}), where

\begin{align}\label{HamMatrix}
A(\lambda,t)=\begin{pmatrix}
a_\lambda(t)&0&c_\lambda(t)&0\\
0&b_\lambda(t)&0&d_\lambda(t)\\
c_\lambda(t)&0&e_\lambda(t)&0\\
0&d_\lambda(t)&0&h_\lambda(t)
\end{pmatrix}
\end{align} 
is equivariant under the action of $G$ for any functions $a,b,c,d,e,h:I\times\mathbb{R}\rightarrow\mathbb{R}$. Now the fixed point space of our action is

\[H^G=\{(u_1,u_2,u_3,u_4)\in H^\frac{1}{2}(\mathbb{R},\mathbb{R}^{4}):\, u_2=u_4=0\}\]
and it follows from \eqref{L} that the kernel of $L_\lambda\mid_{H^G}$ is made of the solutions of the Hamiltonian systems

\begin{equation}\label{Hamiltonianlinreduced}
\left\{
\begin{aligned}
J\begin{pmatrix}
u'_1\\u'_3
\end{pmatrix}+\begin{pmatrix}
a_\lambda(t)&c_\lambda(t)\\
c_\lambda(t)&e_\lambda(t)
\end{pmatrix}
\begin{pmatrix}
u_1\\u_3
\end{pmatrix}&=0,\quad t\in\mathbb{R}\\
\lim_{t\rightarrow\pm\infty}u(t)&=0,
\end{aligned}
\right.
\end{equation}
in $\mathbb{R}^2$, and likewise the kernel of $L_\lambda\mid_{(H^G)^\perp}$ consists of the solutions of

\begin{equation}\label{HamiltonianlinreducedII}
\left\{
\begin{aligned}
J\begin{pmatrix}
u'_2\\u'_4
\end{pmatrix}+\begin{pmatrix}
b_\lambda(t)&d_\lambda(t)\\
d_\lambda(t)&h_\lambda(t)
\end{pmatrix}
\begin{pmatrix}
u_2\\u_4
\end{pmatrix}&=0,\quad t\in\mathbb{R}\\
\lim_{t\rightarrow\pm\infty}u(t)&=0.
\end{aligned}
\right.
\end{equation}
We now use an example of Pejsachowicz from \cite{Jacobo} to construct a loop of operators $L=\{L_\lambda\}_{\lambda\in I}$ such that $\sfl(L)=0$ but $\sfl_G(L)\in RO(\mathbb{Z}_2)$ is non-trivial. To keep our formulas as simple as possible, we use instead of $I=[0,1]$ as parameter interval $[-\pi,\pi]$ and consider for $\lambda\in[-\pi,\pi]$ the matrix family

\begin{align}\label{Az}
\widetilde{A}(\lambda,t)=\begin{pmatrix}
a_\lambda(t)&c_\lambda(t)\\
c_\lambda(t)&e_\lambda(t)
\end{pmatrix}=\begin{cases}
(\arctan t)JS_{\lambda},\quad t\geq 0\\
(\arctan t) JS_0,\quad t<0,
\end{cases},
\end{align}
where

\[S_{\lambda}=\begin{pmatrix}
\cos(\lambda)&\sin(\lambda)\\
\sin(\lambda)&-\cos(\lambda)
\end{pmatrix}.\]
Note that $\tilde{A}(-\pi,t)=\tilde{A}(\pi,t)$ for all $t\in\mathbb{R}$.\\
The space $\mathbb{R}^2$ is symplectic with respect to the canonical symplectic form $\omega_0(u,v)=\langle Ju,v\rangle_{\mathbb{R}^{2}}$. As the matrices \eqref{Az} converge uniformly in $\lambda$ to families of hyperbolic matrices for $t\rightarrow\pm\infty$, it can be shown that the stable and unstable spaces $E^s(\lambda,0)$, $E^u(\lambda,0)$ are Lagrangian subspaces of $\mathbb{R}^2$ (cf. e.g. \cite[Lemma 4.1]{Homoclinics}). This implies in particular that $E^s(\lambda,0)$ and $E^u(\lambda,0)$ are one-dimensional for all $\lambda\in[-\pi,\pi]$.\\
To find non-trivial solutions of \eqref{Hamiltonianlinreduced}, we now consider $E^u(\lambda,0)\cap E^s(\lambda,0)\neq \{0\}$. By a direct computation it can be checked that

\begin{align*}
u_-(t)&=\sqrt{t^2+1}\,e^{-t\arctan(t)}\begin{pmatrix} 1\\0\end{pmatrix},\, t\leq 0,\\ u_+(t)&=\sqrt{t^2+1}\,e^{-t\arctan(t)}\begin{pmatrix} \cos\left(\frac{\lambda}{2}\right)\\ \sin\left(\frac{\lambda}{2}\right)\end{pmatrix},\,t\geq 0,
\end{align*}
are solutions of \eqref{Hamiltonianlinreduced} on the negative and positive half-line, respectively. As they extend to global solutions and since $t\arctan(t)\rightarrow \infty$ as $t\rightarrow\pm\infty$, we see that $u_-(0)\in E^u(\lambda,0)$ and $u_+(0)\in E^s(\lambda,0)$. As $u_+(0)$ and $u_-(0)$ are linearly dependent if and only if $\lambda=0$, we conclude that \eqref{Hamiltonianlinreduced} has a non-trivial solution if and only if $\lambda=0$, and the kernel of $L_0\mid_{H^G}$ is the span of

\[u_{\ast}(t)=\sqrt{t^2+1}\,e^{-t\arctan(t)}\begin{pmatrix} 1\\0\end{pmatrix},\quad t\in\mathbb{R}.\] 
Next we compute the spectral flow of $L\mid_{H^G}$ by a crossing form \eqref{crossing-form}. We need to consider

\[\Gamma(L\mid_{H^G},0)[u_\ast]=\int^\infty_{-\infty}{\left\langle\dot{\widetilde{A}}(0,t)u_\ast(t),u_{\ast}(t)\right\rangle\,dt},\]
where

\begin{align*}
\dot{\widetilde{A}}(0,t)=\begin{cases}
(\arctan t)J\dot{S}_{0},&\quad t\geq 0\\
0,&\quad t<0,
\end{cases}
\end{align*}
and

\[\dot{S}_{0}=\begin{pmatrix}
0&1\\
1&0	
\end{pmatrix}.\]
Consequently,

\begin{align*}
\Gamma(L\mid_{H_G},0)[u_\ast]&=\int^\infty_{0}{\left\langle\dot{\widetilde{A}}(0,t)u_\ast(t),u_{\ast}(t)\right\rangle\,dt}+\int^0_{-\infty}{\left\langle\dot{\widetilde{A}}(0,t)u_\ast(t),u_{\ast}(t)\right\rangle\,dt}\\
&=\int^\infty_{0}{\arctan(t)\langle J\dot{S}_0 u_{\ast}(t),u_{\ast}(t)\rangle\,dt}\\
&=-\int^\infty_{0}{\arctan(t)(t^2+1)e^{-2t\arctan(t)} \,dt}<0,
\end{align*}
which shows that $\Gamma(L\mid_{H_G},0)$ is non-degenerate and of signature $-1$ as quadratic form on the one-dimensional kernel of $L_0\mid_{H^G}$. Therefore, by \eqref{crossing-form}, $\sfl(L\mid_{H^G})=-1$ and so $\sfl_G(L)$ is non-trivial in $RO(\mathbb{Z}_2)$ by \eqref{phi2}. Thus for these functions $a,c$ and $e$ there is a bifurcation of critical points of $f$ by Theorem \ref{thm:main}, and consequently also a bifurcation of solutions of \eqref{Hamiltoniannonlin} from the trivial solution. Let us once again point out, that this bifurcation cannot be found by invariants that only depend on the endpoints of the path $L$.\\
Note that we have not yet chosen functions $b,d$ and $h$ in \eqref{HamMatrix}, which we now do in a way such that $\sfl(L)=0\in\mathbb{Z}$ to obtain an example where also Theorem \ref{thm-FPR} is not applicable. Let us firstly point out that it readily follows from \eqref{sfl} that the spectral flow changes its sign if we reverse the orientation of the path of operators. We now set for $t\in\mathbb{R}$ and $\lambda\in[-\pi,\pi]$

\[b_\lambda(t)=a_{-\lambda}(t),\quad h_\lambda(t)=e_{-\lambda}(t),\quad d_\lambda(t)=c_{-\lambda}(t).\]
Then $L_\lambda\mid_{(H^G)^\perp}=L_{-\lambda}\mid_{H^G}$ and thus $\sfl(L\mid_{(H^G)^\perp})=-\sfl(L\mid_{H^G})=1$. It follows from \eqref{phi3} that $\sfl(L)=0$ and so our example has all the required properties.


 \subsubsection*{Acknowledgements}
     
    The authors were supported by the Deutsche Forschungsgemeinschaft (DFG, German Research Foundation) - 459826435. 
    


\vspace*{1.3cm}

\begin{minipage}{1.2\textwidth}
\begin{minipage}{0.4\textwidth}
Marek Izydorek\\
Institute of Applied Mathematics\\
Faculty of Applied Physics and Mathematics\\
Gda\'{n}sk University of Technology\\
Narutowicza 11/12, 80-233 Gda\'{n}sk, Poland\\
marek.izydorek@pg.edu.pl\\\\\\
Joanna Janczewska\\
Institute of Applied Mathematics\\
Faculty of Applied Physics and Mathematics\\
Gda\'{n}sk University of Technology\\
Narutowicza 11/12, 80-233 Gda\'{n}sk, Poland\\
joanna.janczewska@pg.edu.pl

\end{minipage}
\hfill
\begin{minipage}{0.6\textwidth}
Maciej Starostka\\
Martin-Luther-Universit\"at Halle-Wittenberg\\
Naturwissenschaftliche Fakult\"at II\\
Institut f\"ur Mathematik\\
06099 Halle (Saale)\\
Germany\\
maciej.starostka@mathematik.uni-halle.de\\\\
Nils Waterstraat\\
Martin-Luther-Universit\"at Halle-Wittenberg\\
Naturwissenschaftliche Fakult\"at II\\
Institut f\"ur Mathematik\\
06099 Halle (Saale)\\
Germany\\
nils.waterstraat@mathematik.uni-halle.de
\end{minipage}
\end{minipage}

\end{document}